%

\documentclass{ws-m3as}
\usepackage{color}
\newcommand{\be}{\begin{equation}}
\newcommand{\ee}{\end{equation}}
\newcommand\bes{\begin{eqnarray}} \newcommand\ees{\end{eqnarray}}
\newcommand{\bess}{\begin{eqnarray*}}
\newcommand{\eess}{\end{eqnarray*}}
\newcommand\ep{\varepsilon}

\newcommand\dd{\displaystyle}

\newcommand\R{\mathbb{R}}

\allowdisplaybreaks[4] 
\begin{document}

\markboth{J.P. Wang and M.X. Wang}{A quasilinear chemotaxis-May-Nowak model}

%
\catchline{}{}{}{}{}
%

\title{Global boundedness and finite time blow-up of solutions\\
for a quasilinear chemotaxis-May-Nowak model}

\author{Jianping Wang}

\address{School of Mathematics and Statistics, Xiamen University of Technology, Xiamen 361024, China,\\
jianping0215@163.com}

\author{Mingxin Wang\footnote{Corresponding author}}

\address{School of Mathematics and Statistics, Shanxi University, Taiyuan 030006,  China,\\
mxwang@sxu.edu.cn}

\maketitle

\begin{history}
\received{(Day Month Year)}
\revised{(Day Month Year)}
\comby{(xxxxxxxxxx)}
\end{history}

\begin{abstract}
In this paper, we introduce the nonlinear diffusion term $\nabla\cdot(D(u)\nabla u)$ into the chemotaxis-May-Nowak model to investigate the effects of $D(u)$ and chemotaxis on the global existence, boundedness, and finite time blow-up of solutions. Here, $D(u)$ generalizes the prototype $(1+u)^{m-1}$ with $m\in\R$. For the parabolic-elliptic-parabolic case, if $m>2+\frac{n}{2}-\frac2{n}$ when $n\ge3$ and $m>\frac32$ when $n=2$, then all solutions exist globally and remain bounded, whereas if $n\in\{2,3\}$ and $m<1$, finite time blow-up occurs when $\Omega$ is a ball and the initial data are radially symmetric. For the fully parabolic case, if $m>1+\frac{n}{2}-\frac2{n}$, then all solutions exist globally and remain bounded.
\end{abstract}

\keywords{Chemotaxis; Nonlinear diffusion; Global existence; Finite time blow-up.}

\ccode{AMS Subject Classification: 35B44, 35K59, 35Q92, 92C17.}

\section{Introduction}

The mathematical study of virus infections has attracted much attention in the last decades\cite{Be20,Be21,Be24,Bur24,Het00}. Understanding the in-host dynamics of virus is one of the central issues  in this field. Let $u(t), v(t)$ and $w(t)$ represent the densities of healthy uninfected immune cells, infected immune cells, and virus particles, respectively. The simple May-Nowak model, which describes the virus dynamics in the spatially homogeneous case, is given by (Refs. \refcite{Bo97,Nowak-B96})
 \bess\left\{\begin{array}{ll}
   u_t=-\alpha_1 u-\beta uw+\varphi,\;\;&t>0,\\[1mm]
 v_t=-\alpha_2 v+\beta uw,\;\;&t>0,\\[1mm]
 w_t=-\alpha_3 w+\gamma v,\;\;&t>0,
 \end{array}\right.
 \eess
where $\varphi$ is a source term. The basic setting of this ODE model is that the healthy uninfected cells are supplied at rate $\varphi$ and become infected on contact with virus at rate $\beta uw$ (the infected cells are supplied at rate $\beta uw$ at the same time), the virus particles are produced by the infected cells at rate $\gamma$, and the three components decay linearly with rates $\alpha_1, \alpha_2, \alpha_3$. The qualitative properties of this model had been well studied by many authors\cite{Bo97,Ko04,Nowak2000}.

The spatial movement is an indispensable factor for virus infections. To model the spatial-temporal dynamics of viral infections, Stancevic et al.\cite{Stancevic} generalized the simple May-Nowak model and proposed the following chemotaxis model:
 $$\begin{cases}
  u_t=D_1\Delta u-\chi\nabla\cdot(u\nabla v)-\alpha_1u-\beta uw+\varphi,\;\;&x\in\Omega,\;\;t>0,\\[1mm]
 \kappa v_t=D_2\Delta v-\alpha_2v+\beta uw,&x\in\Omega,\;\;t>0,\\[1mm]
 w_t=D_3\Delta w-\alpha_3w+\gamma v,&x\in\Omega,\;\;t>0,\\[1mm]
 \dd\frac{\partial u}{\partial\nu}=\frac{ \partial v}{\partial\nu}=\frac{ \partial w}{\partial\nu}=0,\;\;&x\in\partial\Omega,\ t>0,\\[1.5mm]
 u(x,0)=u_0(x),\ \kappa v(x,0)=\kappa v_0(x),\ w(x,0)=w_0(x),\ &x\in\Omega,
 \end{cases}\eqno(1.0)$$
where $\kappa=1$, $D_1,D_2$ and $D_3$ are positive constants, $\Omega\subset\R^n$ is a bounded domain with smooth boundary, the unknown functions $u(x,t), v(x,t)$ and $w(x,t)$ denote the population densities of the uninfected cells, infected cells and virus particles at location $x$ and time $t$, respectively. Compared with the simple May-Nowak model, system (1.0) is obtained by taking into account the random diffusion of the cells and virus, as well as the moving tendency of the uninfected cells toward infected cells (modelling by $-\chi\nabla\cdot(u\nabla v)$).

It was shown in Ref.~\refcite{Stancevic} that system (1.0) may exhibit Turing patterns provided that the chemotactic attraction is strong enough. Furthermore, the spontaneous emergence of singularities was also detected numerically in Ref.~\refcite{BPTM19}. Later on, Tao and Winkler provided a rigorous result on the occurrence of finite time blow-up phenomenon for the parabolic-elliptic-parabolic version ($\kappa=0$) of (1.0) in two- and three-dimensions\cite{TW-S21}. On the other hand, to exclude the blow-up phenomenon, several modifications to (1.0) have been involved. One is to weaken the chemotaxis effect: taking $\chi>0$ small enough\cite{BPTM19} or replacing $\chi$ by $\chi(1+u)^{-\delta}$ with some large $\delta$ (Refs. \refcite{XuWH-ZAMP2021,Tao-Z23,Win-19}), so that the solutions of (1.0) exist globally and remain bounded. Another one is to weaken the nonlinear infection term: replacing $uw$ by either $\frac{uw}{1+u+w}$ (Ref.~\refcite{BeT20}) or $u^\lambda w$ with $\lambda<\frac{2}{n}$ (Ref.~\refcite{Fuest19}), so that (1.0) possesses a global bounded classical solution. Moreover, the strong logistic damping effect was taken into account to prevent the aggregation phenomenon induced by chemotaxis\cite{LZ24,WangS23}.

In the study of the viral dynamics, the nonlinear diffusion mechanism, such as porous medium diffusion, can model the phenomena that the random movement of cells may be strengthened or weakened near places of large densities. Moreover, chemotaxis, the biased movement of cells along concentration gradients of a chemical signal, is also important for cells. To investigate the competing effect between the nonlinear diffusion and chemotaxis in May-Nowak model, in this paper, we consider the following chemotaxis-May-Nowak model with nonlinear diffusion:
 \bes \begin{cases}
  u_t=\nabla\cdot(D(u)\nabla u)-\chi\nabla\cdot(u\nabla v)-u-uw+\varphi,&x\in\Omega,\;\;t>0,\\
 \kappa v_t=\Delta v-v+uw,&x\in\Omega,\;\;t>0,\\
 w_t=\Delta w-w+v,&x\in\Omega,\;\;t>0,\\[0.1mm]
 \dd\frac{\partial u}{\partial\nu}=\frac{ \partial v}{\partial\nu}=\frac{ \partial w}{\partial\nu}=0,\;\;&x\in\partial\Omega,\ t>0,\\[0.1mm]
  u(x,0)=u_0(x),\ \kappa v(x,0)=\kappa v_0(x),\ w(x,0)=w_0(x),\ &x\in\Omega,
  \end{cases}\label{1.1}\ees
where $\Omega\subset\R^n$ is a bounded domain with smooth boundary; $\varphi\in C^0(\bar\Omega\times[0,\infty))$ is a given nonnegative function;   $\kappa\in\{0,1\}$ and $D(u)$ satisfies
 \[D\in C^2([0,\infty)),\;\;\;{\rm and}\;\;D(u)>0,\;\;\forall\,u\ge 0,\]
which generalizes the prototype $D(u)=(1+u)^{m-1}$ with $m\in\R$. Since we are interested in the competing effect between the nonlinear diffusion and chemotaxis, we have set other parameters such as $D_i,\alpha_i,\beta,\gamma$ equivalent to $1$ for the simplicity.

In model \eqref{1.1}, the total flux of uninfected cells is modelled as the
superposition of the diffusion flux $-D(u)\nabla u$ and the chemotactic flux $\chi u\nabla v$. One can derive this mathematical model by the macroscopic approach, as that performed in Ref.~\refcite{FLM10,Ke70,LKOH17}. We also notice that, beyond classical population dynamics models, general mathematical frameworks have been developed to model epidemics\cite{Be20,Be24,BOS22,Bur24}.

The main purpose of this paper is to investigate the effects of nonlinear diffusion and chemotaxis on the global existence, boundedness and finite time blow-up of solutions. In what follows, we always assume that $\varphi\in C^0(\bar\Omega\times[0,\infty))$ is a given nonnegative function satisfying
\bess
\|\varphi\|_{L^\infty(\Omega\times(0,\infty))}\le\varphi_*
\eess
for some $\varphi_*>0$.

We firstly provide the local existence, uniqueness and regularity of solutions of \eqref{1.1}. For the parabolic-elliptic-parabolic case (i.e., $\kappa=0$), we suppose that the initial functions $u_0$ and $w_0$ satisfy
\bes
u_0,w_0\in W^{2,\infty}(\Omega), \ u_0,w_0\ge 0,\,\not\equiv 0\;\;{\rm on}\ \bar\Omega, \;\;\mbox{and} \;\;\frac{\partial u_0}{\partial\nu}=\frac{\partial w_0}{\partial\nu}=0\;\;{\rm on}\ \partial\Omega.\label{1.3z}
\ees
And for the fully parabolic  case (i.e., $\kappa=1$), we suppose that the initial functions $u_0,v_0$ and $w_0$ satisfy
\bes\begin{cases}
u_0,v_0,w_0\in W^{2,\infty}(\Omega), \ u_0,v_0,w_0\ge 0,\,\not\equiv 0\;\;{\rm on}\ \bar\Omega, \\[1mm]
\dd\frac{\partial u_0}{\partial\nu}=\frac{\partial v_0}{\partial\nu}=\frac{\partial w_0}{\partial\nu}=0\;\;{\rm on}\ \partial\Omega.\end{cases}\label{1.5z}
\ees

\begin{lemma}\label{l2.1} Let $\kappa\in\{0,1\}$ and $n\ge2$. Assume that the initial data satisfy \eqref{1.3z} when $\kappa=0$, and \eqref{1.5z} when $\kappa=1$. Then there exist $T_{\rm max}\in(0,\infty]$ and a triple $(u,v,w)$ of functions
 \bess
 \begin{cases}
 u\in C^0(\bar\Omega\times[0,\infty))\cap C^{2,1}(\bar\Omega\times(0,T_{\rm max})),\\
w\in C^0(\bar\Omega\times[0,\infty))\cap C^{2,1}(\bar\Omega\times(0,T_{\rm max})),
 \end{cases}
 \eess
 and
  \bess
 \begin{cases}
 v\in C^{2,0}(\bar\Omega\times(0,T_{\rm max}))&\mbox{if}\;\;\kappa=0,\\
v\in C^0(\bar\Omega\times[0,\infty))\cap C^{2,1}(\bar\Omega\times(0,T_{\rm max}))
\;&\mbox{if}\;\;\kappa=1
 \end{cases}
 \eess
such that $(u,v,w)$ solves \eqref{1.1} classically in $\Omega\times(0,T_{\rm max})$ and  $u,v,w>0$ in $\bar\Omega\times(0,T_{\rm max})$. Furthermore,
\bess
if\;\; T_{\rm max}<\infty,\;\;then\;\;  \limsup_{t\to T_{\rm max}}\big(\|u(\cdot,t)\|_{L^\infty(\Omega)}+\|w(\cdot,t)\|_{L^\infty(\Omega)}\big)=\infty.
\eess
\end{lemma}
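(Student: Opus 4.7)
My plan is to follow the standard four-step template for quasilinear chemotaxis systems: (i) local existence via a fixed-point argument, (ii) higher regularity by parabolic bootstrap, (iii) positivity via the strong maximum principle, and (iv) the extensibility criterion by contraposition. For the fully parabolic case $\kappa=1$, I would view \eqref{1.1} as a quasilinear parabolic system for $(u,v,w)$ whose diffusion matrix is upper triangular with positive diagonal entries $D(u),1,1$ and off-diagonal chemotactic coefficient $-\chi u$ in the $(1,2)$ slot. This matrix is normally elliptic in Amann's sense, so the standard theory for quasilinear parabolic systems produces a unique maximal classical solution on some interval $[0,T_{\rm max})$. In the parabolic-elliptic-parabolic case $\kappa=0$ I would first use elliptic solvability of $-\Delta v+v=uw$ with homogeneous Neumann data to define $v$ as a functional $v=v[u,w]$, and then apply the same quasilinear machinery to the reduced system in $(u,w)$.

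Once a local-in-time solution is in hand, I would obtain the stated regularity by a bootstrap: starting from $u,v,w\in C^0(\bar\Omega\times[0,T])$ for $T<T_{\rm max}$, the coefficients and source terms of each equation lie in suitable H\"older spaces after an initial application of parabolic (or, for $\kappa=0$, elliptic) $L^p$-estimates, and Schauder theory then upgrades the solution to $C^{2+\alpha,1+\alpha/2}$ on compact subsets of $(0,T_{\rm max})$, matching the classes listed in the statement. Positivity follows equation by equation: from $uw\ge 0$, the equation for $v$ with nonnegative Neumann data gives $v\ge 0$; then $w_t-\Delta w+w=v\ge 0$ together with $w_0\ge 0,\,\not\equiv 0$ yields $w>0$ by the strong maximum principle; rewriting the $u$-equation as a linear parabolic equation for $u$ with nonnegative right-hand side $\varphi$ and initial data $u_0\ge 0,\,\not\equiv 0$ gives $u>0$; and $v>0$ in the interior then follows from applying the maximum principle once more to the $v$-equation.

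The main technical obstacle is the extensibility criterion. Arguing by contradiction, suppose $T_{\rm max}<\infty$ while $\|u(\cdot,t)\|_{L^\infty(\Omega)}+\|w(\cdot,t)\|_{L^\infty(\Omega)}\le M$ for all $t\in[0,T_{\rm max})$; the aim is to upgrade this $L^\infty$-control to enough H\"older regularity that the local existence result can be reapplied with initial time $T_{\rm max}-\varepsilon$, thereby extending the solution past $T_{\rm max}$. The delicate point is controlling $\nabla v$ in a norm strong enough to handle the chemotactic coupling in the quasilinear $u$-equation: I would feed the $L^\infty$-bound on $uw$ into standard $L^p$-theory for the $v$-equation to obtain $v\in L^\infty([0,T_{\rm max});W^{1,q}(\Omega))$ for every $q<\infty$, and then combine $L^p$-theory for divergence-form parabolic equations with Schauder estimates to close a bootstrap that produces a uniform H\"older bound on $(u,v,w)$ up to $t=T_{\rm max}$, contradicting maximality.
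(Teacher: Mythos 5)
Your outline reproduces exactly the standard route that the paper itself invokes only by citation: its entire proof of Lemma \ref{l2.1} is to refer the local existence, positivity and extensibility to a ``straightforward fixed point argument'' in the style of Tao--Winkler and Winkler--Djie, which is the same local-theory--plus--bootstrap--plus--maximum-principle--plus--contradiction scheme you describe (whether packaged via a contraction mapping or Amann's theory is immaterial). One small repair: your positivity chain begins ``from $uw\ge 0$,'' which presupposes the nonnegativity of $u$ and $w$ that is being proved; argue instead that $u\ge 0$ first from its own equation (at $u=0$ the reaction reduces to $\varphi\ge 0$), then obtain $w\ge 0$ before invoking $v\ge 0$ --- for instance by a Gronwall estimate on $\|w^-(\cdot,t)\|_\infty$ using that $(-\Delta+1)^{-1}$ (or the Neumann heat semigroup when $\kappa=1$) is order preserving, so that $\|v^-\|_\infty\le \|u\|_\infty\|w^-\|_\infty$ on compact time intervals --- after which $uw\ge 0$, $v\ge 0$ and the strict positivity assertions follow as you state.
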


\begin{proof}
The local-in-time existence, positivity and extensibility of classical solutions can be asserted by a straightforward fixed point argument\cite{TW-S11,WinD-10}. The uniqueness of solutions can be verified by using a standard testing procedure\cite[Theorem 2.1]{WinD-10}.
\end{proof}

Then we investigate the global boundedness and finite time blow-up of solutions for the parabolic-elliptic-parabolic case (i.e., $\kappa=0$).

\begin{theorem}\label{t1.1} Let $\kappa=0$, $n\ge2$, and $m\in\R$ satisfy
\bess
 m>\begin{cases}
 2+\frac n2-\frac 2n,&n\ge3,\\
\frac32,&n=2.
 \end{cases}
 \eess
Suppose that $D(h)\ge k_Dh^{m-1}$ for all $h\ge0$ and some constant $k_D>0$. Under the condition \eqref{1.3z}, the solution $(u,v,w)$ of \eqref{1.1} exists globally and is uniformly bounded in the sense that
\bess
\mathop{\rm esssup}\limits_{t>0}\left\{\|u(\cdot,t)\|_{L^\infty(\Omega)}+\|v(\cdot,t)\|_{W^{1,\infty}(\Omega)}+\|w(\cdot,t)\|_{L^\infty(\Omega)}\right\}<\infty.
\eess
\end{theorem}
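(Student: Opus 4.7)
The plan is to exploit the extensibility criterion of Lemma~\ref{l2.1}: it suffices to derive time-independent bounds on $\|u(\cdot,t)\|_{L^\infty(\Omega)}$ and $\|w(\cdot,t)\|_{L^\infty(\Omega)}$, after which elliptic regularity for $-\Delta v+v=uw$ furnishes the $W^{1,\infty}$ bound on $v$. I would proceed through a chain of progressively stronger $L^p$ estimates. The starting point is an $L^1$ bound: integrating the $u$- and $w$-equations separately over $\Omega$ and exploiting the elliptic identity $\int_\Omega v=\int_\Omega uw$, the cross term $uw$ cancels to give
\[\frac{d}{dt}\int_\Omega(u+w)+\int_\Omega(u+w)\le\varphi_*|\Omega|,\]
so $\|u(\cdot,t)\|_{L^1}+\|w(\cdot,t)\|_{L^1}\le C$ uniformly. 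Coupling this with the elliptic regularity bound $\|v\|_{W^{2,q}}\le C\|uw\|_{L^q}$ and the semigroup bound $\|w(\cdot,t)\|_{L^q}\le C(1+\sup_{s\le t}\|v(\cdot,s)\|_{L^q})$ for $q\in(1,\infty)$, one obtains a bootstrap mechanism in which every improved bound on $u$ in $L^p$ translates into matching bounds on $v$ and $w$.

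The decisive step is an $L^p$ estimate for $u$ with $p$ arbitrarily large. Testing the first equation of~\eqref{1.1} against $u^{p-1}$, integrating by parts, using $D(u)\ge k_Du^{m-1}$, and substituting $\Delta v=v-uw$, I would arrive at
\[\frac{1}{p}\frac{d}{dt}\int_\Omega u^p+\frac{4(p-1)k_D}{(p+m-1)^2}\int_\Omega\bigl|\nabla u^{(p+m-1)/2}\bigr|^2+\int_\Omega u^p\le\frac{\chi(p-1)}{p}\int_\Omega u^{p+1}w+\varphi_*\int_\Omega u^{p-1},\]
reducing the task to absorbing $\int_\Omega u^{p+1}w$ into the gradient dissipation term on the left. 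To do so, I would split off $w$ via H\"older's inequality (estimating $\|w\|_{L^r}$ through the bootstrap above) and then apply the Gagliardo--Nirenberg inequality to $u^{(p+m-1)/2}$ in $\R^n$ anchored at the $L^1$ bound on $u$. A direct computation of the resulting interpolation exponent shows that the power of $\|\nabla u^{(p+m-1)/2}\|_{L^2}^2$ entering the estimate is strictly less than $1$---which is exactly what is needed for absorption via Young's inequality---for all sufficiently large $p$ precisely when $m>2+\frac{n}{2}-\frac{2}{n}$ in dimension $n\ge 3$ or $m>\frac{3}{2}$ in dimension $n=2$. The result is a differential inequality of the form $\frac{d}{dt}\int_\Omega u^p\le C_1-C_2\int_\Omega u^p$, yielding a uniform $L^p$ bound on $u$.

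With $\|u(\cdot,t)\|_{L^p(\Omega)}$ bounded for arbitrarily large $p$, elliptic regularity delivers $v\in L^\infty_tL^\infty_x$, the heat semigroup representation of $w_t=\Delta w-w+v$ yields $w\in L^\infty_tL^\infty_x$, and a standard Moser--Alikakos iteration applied to the $u$-equation promotes the $L^p$ bound to $\|u\|_{L^\infty}$. A final application of elliptic regularity to the $v$-equation with an $L^\infty$ right-hand side then provides the required $W^{1,\infty}$ bound on $v$. I expect the principal obstacle to lie in the Gagliardo--Nirenberg accounting in the second step: the extra factor $w$ inside $\int_\Omega u^{p+1}w$ forces the investment of an additional power of $u$ in the interpolation, which explains why the diffusion threshold is shifted upward from the classical $m>2-\frac{2}{n}$ sufficient for the two-component Keller--Segel model to $m>2+\frac{n}{2}-\frac{2}{n}$ here; organising the bootstrap so that every invocation of a $w$-bound is backed by an already-established $u$-bound (and vice versa) without circularity is the delicate technical point.
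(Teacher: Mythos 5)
Your skeleton (mass bound, testing with $u^{p-1}$, Gagliardo--Nirenberg anchored at $\|u\|_{L^1}$, then Moser iteration and elliptic regularity) matches the paper's at the top level, and your derivation of the $L^1$ bound by adding the $u$- and $w$-equations and using $\int_\Omega v=\int_\Omega uw$ is correct (indeed a little slicker than Lemma \ref{l2.4z}). The genuine gap is at the decisive step: your absorption of $\int_\Omega u^{p+1}w$ rests on a pointwise-in-time bound $\|w(\cdot,t)\|_{L^r}$ produced by the claimed bootstrap $\|v\|_{W^{2,q}}\le C\|uw\|_{L^q}$, $\|w(\cdot,t)\|_{L^q}\le C(1+\sup_{s\le t}\|v(\cdot,s)\|_{L^q})$. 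This bootstrap cannot start: with only $u,w\in L^\infty_t L^1_x$ there is no uniform-in-time control of $\|u(\cdot,t)w(\cdot,t)\|_{L^1}$, hence none of $\|v(\cdot,t)\|_{L^1}$ (the identity $\int_\Omega v=\int_\Omega uw$ is only controlled after time-averaging, as in \eqref{2.4y}), so no pointwise-in-time $L^q$ bound for $v$ or for $w$ beyond $L^1$ is available before the $u$-estimate is closed; this is exactly the circularity you flag at the end but do not resolve. Moreover, even granting the best conceivable non-circular bound $w\in L^\infty_tL^r_x$ with $r<\frac{n}{n-2}$, H\"older gives $\int_\Omega u^{p+1}w\le\|w\|_r\|u\|_{(p+1)r'}^{(p+1)}$ with $r'>\frac n2$, and the Lemma \ref{l2.6}-type absorption then requires $(p+1)r'<p+m+\frac2n-1$, i.e. $m>p\left(\frac n2-1\right)+\frac n2+1-\frac2n$, which forces $m\to\infty$ as $p\to\infty$ when $n\ge3$; so a single-pass H\"older/GN accounting cannot reach arbitrarily large $p$ at the stated threshold, and you have not shown that a simultaneous iteration on $u,v,w$ closes at exactly $m>2+\frac n2-\frac2n$.

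The paper breaks this circle by a different mechanism in each dimension. For $n\ge3$ it does not substitute $\Delta v=v-uw$ in the chemotaxis term; it keeps $\chi(p-1)\int_\Omega u^p|\Delta v|$, splits it as $C\int_\Omega u^{pq'}+\int_\Omega|\Delta v|^q$, and controls the $\Delta v$ contribution only through time-weighted integrals $\int_0^t{\rm e}^{-\frac q2(t-s)}\|v\|_{W^{2,q}(\Omega)}^q{\rm d}s$, combining maximal Sobolev regularity for the $w$-equation, elliptic regularity for $v$, Gagliardo--Nirenberg for $w$ anchored at $\|w\|_{L^1}$, and a self-absorption of the $v$-term (Lemma \ref{l3.3}); no pointwise-in-time higher norm of $v$ or $w$ is ever used before the $L^p$ bound on $u$ is closed. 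For $n=2$ it does use your substitution and the term $\int_\Omega u^{p+1}w$, but then couples it with the $w$-equation tested by $w^{q-1}$, working with the functional $\frac1p\int_\Omega u^p+\frac1q\int_\Omega w^q$ whose own dissipation $\int_\Omega w^{q-2}|\nabla w|^2$ absorbs all $w$-norms (Lemma \ref{l3.4a}); that is where $m>\frac32$ enters. To repair your argument you would need one of these two devices (time-weighted maximal regularity with absorption, or a coupled $u$--$w$ energy), rather than the pointwise-in-time bootstrap you describe.
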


The blow-up phenomenon is detected in two and three dimensions.
When $0<m<1$ and $\Omega=B_{R}(0)$ with $R>0$, we obtain a finite-time blow-up result. In addition to \eqref{1.3z}, we moreover suppose that
 \bes
u_0 \;\,{\rm and}\,\; w_0\,\, {\rm are\, radially\, symmetric},\;\;{\rm and}\,\;w_0\;\,{\rm is\, positive\, on}\;\,\bar\Omega.
 \label{1.4z}
 \ees

\begin{theorem}\label{t1.2} Let $\kappa=0$, $n\in\{2,3\}$, $0<m<1$, and $\Omega=B_{R}(0)$ with $R>0$. Suppose that $D(h)\le K_Dh^{m-1}$ for all $h\ge0$ and some constant $K_D>0$. Then for any $\mu>0$ and $\beta\ge\alpha>0$, there exists a small $r_*=r_*(\mu,\alpha,\beta, m,n)\in(0,R)$ such that if $u_0$ and $w_0$ satisfy \eqref{1.3z}, \eqref{1.4z} and
\bes
\int_\Omega u_0{\rm d}x=\mu,\;\;\;\;\text{and}\;\;\;\alpha\le w_0\le \beta\;\;\text{on}\,\; \bar\Omega
 \label{1.5x}
 \ees
as well as
 \bes
\int_{B_{r_*}(0)} u_0{\rm d}x\ge \frac{\mu}{2},\label{1.6x}
 \ees
then the solution of \eqref{1.1} blows up in finite time.
\end{theorem}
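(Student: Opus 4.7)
The plan is to carry out a moment-type blow-up argument in radial mass-accumulation coordinates, in the spirit of Tao--Winkler's treatment of the semilinear parabolic-elliptic-parabolic May--Nowak model, but adapted to accommodate the degenerate (and, since $m<1$, in fact fast) diffusion. Exploiting radial symmetry, I introduce $s=r^n$ and the cumulative mass
$$U(s,t)=\int_0^{s^{1/n}}\rho^{n-1}u(\rho,t)\,{\rm d}\rho,\qquad u(r,t)=nU_s(r^n,t),$$
together with $W(s,t)=\int_0^{s^{1/n}}\rho^{n-1}uw\,{\rm d}\rho$ and $V(s,t)=\int_0^{s^{1/n}}\rho^{n-1}v\,{\rm d}\rho$. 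Integrating the $u$-equation over $B_{s^{1/n}}(0)$, and using the elliptic identity $r^{n-1}v_r(r,t)=V(s,t)-W(s,t)$ obtained from $\Delta v-v+uw=0$, yields a scalar evolution of the form
$$U_t=n^2 s^{2-\frac{2}{n}}D(nU_s)\,U_{ss}+\chi n\,U_s\,W-\chi n\,U_s\,V-U-W+\Phi,\qquad s\in(0,R^n),$$
with $\Phi(s,t)=\int_0^{s^{1/n}}\rho^{n-1}\varphi\,{\rm d}\rho$, and with boundary data $U(0,t)=0$, $U(R^n,t)=\omega_{n-1}^{-1}\int_\Omega u\,{\rm d}x$.

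Next I establish a short-time positive pointwise lower bound on $w$. Because $v\ge 0$, one has $w_t\ge\Delta w-w$; Neumann comparison with $w_0\ge\alpha$ gives $w(\cdot,t)\ge\alpha e^{-t}$ everywhere, so on any fixed interval $[0,T_0]$ we have $w\ge\alpha_*:=\alpha e^{-T_0}$ and hence $W(s,t)\ge\alpha_*U(s,t)$. This unlocks the aggregation term in the $U$-equation. I then define, for parameters $\gamma\in(0,1)$ and $s_0\in(0,R^n)$ to be chosen,
$$\phi(t)=\int_0^{s_0}s^{-\gamma}(s_0-s)\,U(s,t)\,{\rm d}s,$$
and note that the concentration hypothesis \eqref{1.6x} forces $U(s,0)\ge\tfrac{\mu}{2\omega_{n-1}}$ for $s\ge r_*^n$, hence $\phi(0)\ge c\,\mu\, s_0^{2-\gamma}$ as soon as $r_*^n\ll s_0$.

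The core of the argument is to derive an ODI $\phi'(t)\ge c_1 s_0^{\gamma-3}\phi(t)^2-K_1\phi(t)^\theta-K_2$ for some $\theta\in[1,2)$ and some $c_1,K_1,K_2>0$ depending only on $\mu,\alpha,\beta,m,n$. The superlinear term comes from $\chi n U_sW\ge\tfrac{\chi n\alpha_*}{2}(U^2)_s$; integration by parts and the Cauchy--Schwarz-type bound $\phi^2\le\bigl(\int_0^{s_0}s^{-\gamma+1}(s_0-s)\,{\rm d}s\bigr)\cdot\int_0^{s_0}s^{-\gamma-1}(s_0-s)U^2\,{\rm d}s$ produce the quadratic lower bound. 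The diffusion contribution $n^2\int s^{-\gamma}(s_0-s)\,s^{2-2/n}D(u)U_{ss}\,{\rm d}s$ is estimated by writing it as $n\int s^{-\gamma}(s_0-s)s^{2-2/n}\partial_s H(u)\,{\rm d}s$ with $H(u)=\int_0^uD(\xi)\,{\rm d}\xi\le\tfrac{K_D}{m}(1+u^m)$ (using $0<m<1$); integrating by parts and applying Jensen/H\"older with the measure $s^{a}(s_0-s)\,{\rm d}s$ converts the resulting integral into a power of $\phi$ strictly below $2$, precisely because $m<1$ and $n\in\{2,3\}$ conspire to keep the effective exponent subquadratic. The remaining terms, namely $-U$, $-W$, $\Phi$, and $-\chi nU_sV$, are controlled by the fixed total mass bound $\int u\le\mu+\varphi_*|\Omega|$ and by standard elliptic $L^p$-bounds on $v$ following from $\Delta v-v=-uw$ with $uw\in L^1$, giving only at most linear contributions in $\phi$.

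With the ODI in hand, I fix $T_0>0$, then choose $s_0$ small, and finally $r_*$ small enough that $\phi(0)$ is large enough for the quadratic term to dominate; the resulting Riccati-type comparison forces $\phi(t)\to\infty$ within time $\le T_0$, which via the representation of $\phi$ and Lemma~\ref{l2.1} precludes $T_{\max}=\infty$. The hard part, as usual in this line of work, is \textbf{Step~3}: tuning $\gamma$ and tracking the exponents so that the nonlinear diffusion term is genuinely dominated by the quadratic aggregation term. This is exactly where $0<m<1$ (so that $H(u)$ grows only sublinearly) and the dimensional restriction $n\in\{2,3\}$ (so that $2-\gamma-2/n$ lies in the right range) are both indispensable, and where any miscount of exponents collapses the whole scheme.
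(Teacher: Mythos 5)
Your overall strategy is the same as the paper's: pass to the radial mass functional ($z$ in \eqref{4.10}, your $U$ up to a factor $n$), derive a scalar inequality with the aggregation term unlocked by a pointwise lower bound on $w$, test against a singular weighted moment (the paper's $y(r,t)=\int_0^{r^n}s^{-\eta}z\,{\rm d}s$, your $\phi$ with the extra harmless factor $(s_0-s)$), estimate the fast-diffusion term through $D_0(\sigma)=\int_0^\sigma D\le\frac{K_D}{m}\sigma^m$ plus Young's inequality (exactly the paper's \eqref{4.22}--\eqref{4.23}), and close with a Riccati comparison on a short time interval chosen before $r_*$. The lower bound $w\ge\alpha e^{-t}$ is fine and replaces \eqref{3c.37}'s lower half.

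The genuine gap is that you never establish the short-time \emph{upper} bounds that make the negative nonlocal terms harmless, i.e.\ the content of Lemma~\ref{l4.1} (\eqref{3c.36}--\eqref{3c.38}), which the paper imports from the bootstrap in Sec.~3 of Tao--Winkler. Your control of $-W$ and $-\chi nU_sV$ rests on the assertion that $uw\in L^1(\Omega)$ uniformly in time, so that elliptic theory gives $\|v\|_{L^p}\le K$ and hence $V(s,t)\lesssim s^{(p-1)/p}$; but a time-uniform bound on $\int_\Omega uw\,{\rm d}x$ is \emph{not} available a priori -- the mass identity only gives a space--time bound -- and obtaining it requires exactly the circular-looking bootstrap: one needs $\|w(\cdot,t)\|_\infty\le 2\beta$ on a short interval, which needs $\|v\|_{L^p}$ with $p>\frac n2$ fed through the heat semigroup in the $w$-equation, which needs $uw\in L^1$, which needs the bound on $w$. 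Tao--Winkler close this loop by a continuation argument on the maximal interval where $\|w\|_\infty\le2\beta$, and this is also where the restriction $n\in\{2,3\}$ is genuinely used (so that $\frac n2<\frac n{n-2}$ and a $p\in(\frac n2,\frac n{n-2})$ exists, giving $\frac{p-1}{p}\ge\frac2n-\ep$ and the decay $V\lesssim s^{2/n-\ep}$ as in \eqref{4.14}); contrary to your last paragraph, the exponent bookkeeping for the diffusion term does not by itself force $n\le3$ when $0<m<1$. Without this lemma your ODI for $\phi$ does not close: with only $\|v\|_{L^1}$ (itself uncontrolled) the term $-\chi nU_sV$ cannot be absorbed, and $-W$ cannot be bounded below by $-C\,U$. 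Supplying this short-time a priori estimate (or citing it as the paper does) is the missing, and essential, step; the rest of your scheme then goes through along the lines of Lemmas~\ref{l4.2}--\ref{l4.5}.
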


When $m<0$, if $0<D(h)\le K_Dh^{m-1}$ for all $h\ge0$ and some constant $K_D>0$, then it is easy to infer that $0<D(h)\le \bar K_Dh^{\bar  m-1}$ for all $h\ge0$ and some constant $\bar m\in(0,1)$ and $\bar K_D>0$ since $D\in C^2([0,\infty))$. Actually, it suffices to take $\bar K_D=\max\{\sup_{h\in([0,1])}D(h),\, K_D\}$ and $\bar m$ could be any value between $(0,1)$. And hence an application of Theorem \ref{t1.3} results in a more general conclusion that involving $m<1$.

 \begin{proposition}\label{p1.2}
Let $\kappa=0$, $n\in\{2,3\}$, $m<1$, and $\Omega=B_{R}(0)$ with $R>0$. Suppose that $D(h)\le K_Dh^{m-1}$ for all $h\ge0$ and some constant $K_D>0$. Then for any $\mu>0$ and $\beta\ge\alpha>0$, there exists a small $r_*=r_*(\mu,\alpha,\beta, m,n)\in(0,R)$ such that if $u_0$ and $w_0$ satisfy \eqref{1.3z}, \eqref{1.4z}, \eqref{1.5x} and \eqref{1.6x}, then the solution of \eqref{1.1} blows up in finite time.
\end{proposition}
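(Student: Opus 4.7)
The plan is to deduce Proposition \ref{p1.2} from Theorem \ref{t1.2} by a simple majorisation of the diffusion coefficient: the case $0<m<1$ is already handled by Theorem \ref{t1.2} verbatim, so the only genuine content is the reduction of the range $m\le 0$ to some exponent $\bar m\in(0,1)$ while preserving the prescribed upper bound on $D$. The key observation, foreshadowed in the paragraph preceding the statement, is that the hypothesis $D(h)\le K_D h^{m-1}$ is very weak (in fact vacuous) near $h=0$ when $m-1<0$, while at the same time $D\in C^2([0,\infty))$ forces $D$ to be bounded on any compact interval near the origin; hence the behaviour near $0$ can be absorbed into a larger multiplicative constant at the cost of enlarging the exponent to something in $(0,1)$.

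Assume henceforth $m\le 0$. I would set
\[
\bar K_D := \max\bigl\{\sup_{h\in[0,1]} D(h),\,K_D\bigr\},
\]
which is finite by continuity of $D$, and choose any $\bar m\in(0,1)$. The claim is that $D(h)\le \bar K_D\, h^{\bar m-1}$ for every $h>0$. For $h\in(0,1]$ one has $h^{\bar m-1}\ge 1$ since $\bar m-1<0$, whence $D(h)\le\sup_{[0,1]} D\le \bar K_D\le \bar K_D\, h^{\bar m-1}$. For $h\ge 1$, the relation $\bar m\ge m$ yields $\bar m-1\ge m-1$ and thus $h^{\bar m-1}\ge h^{m-1}$, so $D(h)\le K_D h^{m-1}\le \bar K_D\, h^{\bar m-1}$.

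With this majorisation in hand, the full hypotheses of Theorem \ref{t1.2} are satisfied for the pair $(\bar m,\bar K_D)\in(0,1)\times(0,\infty)$, for the same domain $\Omega=B_R(0)$, the same mass and pointwise conditions \eqref{1.5x}--\eqref{1.6x}, and the same radial symmetry/positivity conditions \eqref{1.3z}--\eqref{1.4z}. Applying Theorem \ref{t1.2} then supplies a radius $r_*=r_*(\mu,\alpha,\beta,\bar m,n)\in(0,R)$ for which the solution of \eqref{1.1} blows up in finite time whenever \eqref{1.6x} holds with this $r_*$; renaming it as $r_*(\mu,\alpha,\beta,m,n)$ completes the proof. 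There is essentially no obstacle beyond this elementary exponent comparison; the only mild subtlety is the behaviour at $h=0$, which is neutralised by the fact that $h^{\bar m-1}\to+\infty$ as $h\downarrow 0$, so the inequality $D(h)\le \bar K_D\, h^{\bar m-1}$ is trivially preserved in the limit and no separate argument at the origin is required.
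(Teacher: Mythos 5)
Your proposal is correct and follows essentially the same route as the paper, which justifies Proposition \ref{p1.2} by exactly this reduction: choosing $\bar K_D=\max\{\sup_{h\in[0,1]}D(h),\,K_D\}$ and an arbitrary $\bar m\in(0,1)$, verifying $D(h)\le \bar K_D h^{\bar m-1}$ on $(0,1]$ and $[1,\infty)$ separately, and then invoking the blow-up theorem for the exponent $\bar m$ (the paper's pointer to Theorem \ref{t1.3} there is evidently a typo for Theorem \ref{t1.2}). Your treatment of the behaviour near $h=0$ and of the resulting $r_*$ matches the paper's argument, so nothing further is needed.
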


We recall from Ref.~\refcite{TW-S21} that, when $D(u)$ is a positive constant (corresponding to $m=1$) and $n\in\{2,3\}$, the solution of \eqref{1.1} with $\kappa=0$ will blow up in finite time for some initial data and large chemotaxis effect. In two dimensional case, Theorem \ref{t1.1} asserts the global boundedness of solutions when the diffusion exponent $m>\frac32$ and Proposition \ref{p1.2} confirms the finite time blow-up when $m<1$. However, we have to leave the gap $1<m\le \frac32$ for the future study in the case of $\kappa=0$ and $n=2$. For $n=3$ and $\kappa=0$, it is also unknown that whether the solutions blow up or not when $1<m\le 2+\frac n2-\frac 2n$.

At last, we study the global boundedness of solutions for the fully parabolic case (i.e., $\kappa=1$).

\begin{theorem}\label{t1.3}Let $\kappa=1$, $n\ge2$, $m>1+\frac n2-\frac 2n$. Suppose that $D(h)\ge k_Dh^{m-1}$ for all $h\ge0$ and some constant $k_D>0$. Under the condition \eqref{1.5z}, the solution $(u,v,w)$ of \eqref{1.1} exists globally and is uniformly bounded in the sense that
\bess
\mathop{\rm esssup}\limits_{t>0}\left\{\|u(\cdot,t)\|_{L^\infty(\Omega)}
+\|v(\cdot,t)\|_{W^{1,\infty}(\Omega)}+\|w(\cdot,t)\|_{L^\infty(\Omega)}\right\}<\infty.
\eess
\end{theorem}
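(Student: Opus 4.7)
The plan is to bootstrap from the natural $L^1$-conservation of $u$ to an $L^\infty$-bound by way of a quasi-energy $L^p$-estimate, and then propagate boundedness to $v$ and $w$ through their dissipative linear equations. Integrating the first equation in \eqref{1.1} and invoking $\|\varphi\|_\infty \leq \varphi_*$ yields the mass bound $\sup_{t>0}\|u(\cdot,t)\|_{L^1(\Omega)} < \infty$, which will anchor every subsequent interpolation.

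For fixed $p>1$, testing the $u$-equation by $u^{p-1}$ and using $D(u) \geq k_D u^{m-1}$ produces, after integration by parts with the Neumann boundary condition,
\[
\frac{1}{p}\frac{d}{dt}\int_\Omega u^p + (p-1)k_D \int_\Omega u^{p+m-3}|\nabla u|^2 + \int_\Omega u^p + \int_\Omega u^p w \leq \chi(p-1)\int_\Omega u^{p-1}\nabla u \cdot \nabla v + \int_\Omega u^{p-1}\varphi.
\]
Young's inequality absorbs half of the diffusion dissipation and converts the chemotaxis contribution into a remainder of the form $C(p)\int_\Omega u^{p-m+1}|\nabla v|^2$. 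In parallel, the Neumann heat semigroup applied to $v_t = \Delta v - v + uw$ and $w_t = \Delta w - w + v$ lets me bound $\|\nabla v(\cdot,t)\|_{L^r}$ polynomially in $\sup_{s\le t}\|(uw)(\cdot,s)\|_{L^\sigma}$, and $\|w(\cdot,t)\|_{L^\tau}$ polynomially in $\|v(\cdot,t)\|_{L^p}$; iterating these two estimates expresses $\|\nabla v\|_{L^r}$ in terms of $\|u\|_{L^p}$ alone, up to harmless constants.

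The decisive step is the Gagliardo-Nirenberg closure. After a Hölder split of $\int_\Omega u^{p-m+1}|\nabla v|^2$ followed by insertion of the semigroup bound on $\|\nabla v\|_{L^r}$, the factor $\|u^{(p+m-1)/2}\|_{L^b}$ is interpolated between the available dissipation $\|\nabla u^{(p+m-1)/2}\|_{L^2}$ and the $L^1$-anchor $\|u\|_{L^1}^{(p+m-1)/2}$. The associated interpolation exponent on the gradient norm drops strictly below one exactly when $m > 1 + \frac n2 - \frac 2n$, so a final Young absorption renders the chemotactic remainder negligible against the diffusion dissipation and yields an ODI implying $\sup_{t>0}\|u(\cdot,t)\|_{L^p(\Omega)} < \infty$ for every $p \in [1,\infty)$.

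Once $u$ is bounded in $L^p$ for arbitrarily large $p$, an Alikakos--Moser iteration applied to the $u$-equation (with $\nabla v$ now controlled in every $L^q$ via the semigroup bootstrap) upgrades the bound to $\|u\|_{L^\infty(\Omega\times(0,\infty))} < \infty$. Plugging $uw \in L^\infty_{t,x}$ back into the $v$-equation provides $\|v\|_{W^{1,\infty}} < \infty$ by parabolic maximal regularity and Sobolev embedding, and the analogous step on the $w$-equation delivers $\|w\|_{L^\infty} < \infty$. The main obstacle is the Gagliardo-Nirenberg closure: because the chemical source is the product $uw$ rather than $u$ alone, controlling $\|\nabla v\|_{L^r}$ requires a two-stage semigroup bootstrap through both $v$ and $w$, and only careful bookkeeping of all exponents reveals $m = 1 + \frac n2 - \frac 2n$ as the precise subcriticality threshold.
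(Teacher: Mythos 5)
Your skeleton (mass bound, $L^p$ testing against the dissipation $\int_\Omega u^{p+m-3}|\nabla u|^2$, Moser iteration, then regularity transfer to $v$ and $w$) is reasonable and its outer layers match the paper, but the decisive step is missing, and what you assert in its place does not hold as stated. First, your only anchor is $\sup_t\|u(\cdot,t)\|_{L^1(\Omega)}$; you never establish any uniform bound on $v$ or $w$, yet your semigroup chain has to start somewhere: integrating the $v$-equation alone gives $\frac{d}{dt}\int_\Omega v=-\int_\Omega v+\int_\Omega uw$ with an uncontrolled right-hand side, so you need the trick of adding the $u$- and $v$-equations (Lemma \ref{l2.5}) to get $\sup_t\|v\|_{L^1(\Omega)}<\infty$ and hence $\sup_t\|w\|_{L^{p_*}(\Omega)}<\infty$ for $p_*<\frac{n}{n-2}$ (Proposition \ref{p2.4}). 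Second, and more seriously, the constants in your bound for $\|\nabla v\|_{L^r}$ are not ``harmless'': each passage through $uw$ in the $v\leftrightarrow w$ bootstrap inserts a factor of $\sup_{s\le t}\|u(\cdot,s)\|_{L^p}$, i.e.\ precisely the unknown $M(T):=\sup_{t\le T}\|u(\cdot,t)\|_{L^p}$ you are trying to bound, and for $n\ge4$ a single pass does not suffice (with $w$ only in $L^{p_*}$, $p_*<\frac n{n-2}$, the admissible $r$ stays bounded as $p\to\infty$), so several passes --- hence several powers of $M(T)$ --- are unavoidable. As written the argument is therefore circular: after the H\"older/Gagliardo--Nirenberg/Young absorption the surviving term is a power $M(T)^{\gamma}$, with $\gamma$ determined by the number of bootstrap passes and the interpolation exponents, and the theorem follows only if you verify $\gamma<p$ for all large $p$. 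This self-map inequality, together with the exponent bookkeeping behind it, is the actual content of the proof and is entirely absent; in particular the claim that ``the interpolation exponent drops strictly below one exactly when $m>1+\frac n2-\frac 2n$'' is unsubstantiated, since in a scheme of your type the threshold emerges from the interplay of the $M(T)$-powers with the absorption, not from a single Gagliardo--Nirenberg exponent.

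For comparison, the paper resolves the circularity differently: in the $L^p$ test it integrates the chemotaxis term by parts to $-\chi(p-1)\int_\Omega u^p\Delta v$ and controls $\Delta v$ through exponentially weighted-in-time maximal Sobolev regularity (Lemma \ref{l2.2a}) applied to both the $v$- and the $w$-equations, with an Ehrling/Gagliardo--Nirenberg absorption of the $v$- and $w$-terms back into the left-hand side, anchored at $\|v\|_{L^1}$ and $\|w\|_{L^{p_*}}$; the only surviving quantities are norms $\|u\|_{pq'}$ and $\|u\|_{q\theta'}$, which Lemma \ref{l2.6} absorbs into the dissipation, and the simultaneous feasibility of all exponent choices is exactly the hypothesis $m>1+\frac n2-\frac 2n$ (Lemma \ref{l5.2}; the case $n=2$ is handled by a separate, simpler argument). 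If you wish to keep your $\nabla v$-semigroup route, you must (i) prove the $L^1$ bound for $v$ and the $L^{p_*}$ bound for $w$, (ii) fix the number of bootstrap passes and track the resulting power of $M(T)$, and (iii) carry out the absorption quantitatively and verify the closing condition $\gamma<p$ under the stated hypothesis on $m$ --- none of which appears in the proposal.
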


We remark that, it could be a great challenge to obtain the blow-up solution for the fully parabolic case due to the lack of suitable gradient structure admitted by some Lyapunov functional.

Clearly, the prototype choice $D(u)=(1+u)^{m-1}$ satisfies the requirements of $D$ in Theorem \ref{t1.1}-Theorem \ref{t1.3} and Proposition \ref{p1.2}. Since the a priori estimates in Subsection 3.1 and Section 4 still hold even if the diffusion is degenerate (i.e., $D(0)=0$), one may construct a global weak solution which is bounded under the conditions of Theorem \ref{t1.1} and Theorem \ref{t1.3} when the diffusion is degenerate (cf. Ref. \refcite{Winkler-JDE2018}).

In order to better understand our conclusions, we recall some qualitative results concerning the classical quasilinear Keller-Segel system
\bes
 \begin{cases}
   u_t=\nabla\cdot(D(u)\nabla u)-\chi\nabla\cdot(u\nabla v),\;\;&x\in\Omega,\;\;t>0,\\
 \tau v_t=\Delta v-v+u,&x\in\Omega,\;\;t>0,
 \end{cases}\label{1.6z}
 \ees
where $\tau\in\{0,1\}$, $D(u):=(1+u)^{m-1}$. For the fully parabolic case ($\tau=1$), it had been shown that whenever $m>2-\frac2n$, all solutions exist globally and remain bounded\cite{ISY14,Tao-Wjde12}, however if $m<2-\frac2n$ and $n\ge2$ as well as $\Omega$ is a ball, then unbounded solutions were found in Ref. \refcite{Winkler10} (cf. also Refs. \refcite{C-Sjde12,C-Sjde15,Winkler-JDE2019}). In the critical case: $m=2-\frac2n$, global weak solutions were established when the total mass is small; whereas a converse result was obtained if the mass is sufficiently large\cite{L-M17,Winkler-PLMS2022}. For the parabolic-elliptic case ($\tau=0$), the conclusions are analogous to the fully parabolic case\cite{Lan20}. It is natural to ask: What is the critical exponent of \eqref{1.1} governing the occurrence of blowup? Does \eqref{1.1} and \eqref{1.6z} share the same critical exponent? We shall leave these for the future study.

The variants of equations (1.0) and \eqref{1.1} have attracted a lot of attention, for instance, immune chemotaxis models (Refs. \refcite{LKOH17,YKH20,ZSL23}) and the HIV-1-cytotoxic T lymphocytes (CTL) chemotaxis model (Ref. \refcite{WZM20}) have been studied.

The remainder of this paper is organized as follows. In Section 2, we present several preliminaries. Section 3 is devoted to proving the global boundedness and finite time blow-up for the parabolic-elliptic-parabolic case. In Section 4, we show the global boundedness of the fully parabolic version of \eqref{1.1}. For the brevity, we denote $\|\cdot\|_p:=\|\cdot\|_{L^p(\Omega)}$ for $0<p\le\infty$.

\section{Preliminaries}
 \setcounter{equation}{0} {\setlength\arraycolsep{2pt}

We begin by recalling a maximal Sobolev regularity property from Lemma 2.3 of Ref. \refcite{CT21RWA}, which plays a crucial role in establishing the boundedness results.

{\begin{lemma}\label{l2.2a} Assume that $q, r \in (1,\infty)$, $\delta\in(0,1)$ and $T>0$. Let $z$ be a classical solution of
 \bess \begin{cases}
 z_t=\Delta z-z+f,\;&x\in\Omega,\; t\in(0,T),\\[0.5mm]
\dd\frac{\partial z}{\partial\nu}=0, &x\in\partial\Omega,\; t\in(0,T),\\[0.5mm]
z(x,0)=z_0(x),\; &x\in\Omega,
 \end{cases} \eess
where $f\in C(\bar\Omega\times(0,T))$, and $z_0\in W^{2,\infty}(\Omega)$ satisfying $\frac{\partial z_0}{\partial\nu}=0$ on $\partial\Omega$. Then there exists $C=C(q, r, \delta)>0$ such that
\bess
\int_0^t {\rm e}^{-\delta r(t-s)}\|\Delta z(\cdot,s)\|_q^r{\rm d}s\le C\int_0^t {\rm e}^{-\delta r(t-s)}\|f(\cdot,s)\|_q^r{\rm d}s+C\|z_0\|_{W^{2,q}(\Omega)}^r
\eess
for all $t\in(0,T)$.
\end{lemma}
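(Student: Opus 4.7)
The plan is to reduce the claimed weighted maximal Sobolev regularity estimate to the classical unweighted maximal $L^r$-regularity of the Neumann Laplacian with a positive mass shift. The key fact I would invoke is that on $L^q(\Omega)$ the operator $-\Delta+I$ with homogeneous Neumann boundary condition generates an analytic, exponentially decaying $C_0$-semigroup; by the Hieber--Pr\"uss theorem (equivalently, by Weis' $\mathcal{R}$-boundedness approach in the UMD space $L^q(\Omega)$), it enjoys maximal $L^r$-regularity on every interval $(0,T)$ with a constant \emph{independent of $T$}, the spectral gap being responsible for this uniformity.

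First, I would introduce the rescaled function $\tilde z(x,s):=e^{\delta s}z(x,s)$. A direct computation shows that $\tilde z$ satisfies
\bess
\tilde z_t=\Delta\tilde z-(1-\delta)\tilde z+e^{\delta s}f\;\;\mbox{in}\;\;\Omega\times(0,T),\;\;\frac{\partial\tilde z}{\partial\nu}=0\;\;\mbox{on}\;\;\partial\Omega\times(0,T),\;\;\tilde z(\cdot,0)=z_0.
\eess
Since $1-\delta>0$, the shifted operator $-\Delta+(1-\delta)I$ is again sectorial with positive spectral bound, so the $T$-independent maximal $L^r$-regularity just cited applies to $\tilde z$ and produces
\bess
\int_0^t\|\Delta\tilde z(\cdot,s)\|_q^r\ds\le C\int_0^t\|e^{\delta s}f(\cdot,s)\|_q^r\ds+C\|z_0\|_{W^{2,q}(\Omega)}^r
\eess
with $C=C(q,r,\delta)$. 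Unwinding the definition, $\|\Delta\tilde z(\cdot,s)\|_q^r=e^{\delta rs}\|\Delta z(\cdot,s)\|_q^r$ and $\|e^{\delta s}f(\cdot,s)\|_q^r=e^{\delta rs}\|f(\cdot,s)\|_q^r$; multiplying the preceding inequality by $e^{-\delta rt}$ and using $e^{-\delta rt}\le 1$ to retain the initial-data term in its stated form then yields the claimed estimate.

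The main obstacle is the $T$-independence of the constant in the classical maximal $L^r$-regularity: without a mass term this would require some care, but here the uniform exponential decay of the semigroup makes it routine. Since the paper cites the result directly from Ref.~\refcite{CT21RWA}, one can alternatively invoke that reference verbatim; the exponential-rescaling argument above is simply the natural route from the standard form of maximal regularity to the weighted form should a self-contained derivation be desired.
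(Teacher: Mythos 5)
Your argument is correct, and it differs from the paper in the sense that the paper offers no proof at all: Lemma \ref{l2.2a} is simply quoted from Lemma 2.3 of Ref.~\refcite{CT21RWA}, so your exponential-rescaling derivation is a self-contained substitute (and it is essentially the standard route by which such weighted maximal-regularity statements are obtained in that line of references). The substitution $\tilde z=e^{\delta s}z$ and the resulting equation $\tilde z_t=\Delta\tilde z-(1-\delta)\tilde z+e^{\delta s}f$ are computed correctly, the point that $1-\delta>0$ yields a spectral gap and hence maximal $L^r$-regularity on $(0,\infty)$ with a $T$-independent constant is the right one, and the final multiplication by $e^{-\delta rt}$ recovers exactly the stated inequality. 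Two small technical points you should make explicit if you want the sketch to be airtight: (i) maximal regularity controls $\|A\tilde z\|_q$ with $A=-\Delta+(1-\delta)I$, so to bound $\|\Delta\tilde z\|_q$ you also need $\|\tilde z(\cdot,s)\|_q$, which follows from the variation-of-constants formula, the exponential decay $\|e^{-sA}\|\le e^{-(1-\delta)s}$ and Young's convolution inequality in $L^r$, again with a constant depending only on $q,r,\delta$; (ii) the initial-data term in maximal regularity is naturally measured in the real interpolation (trace) space $(L^q(\Omega),D(A))_{1-1/r,r}$, and one should note that $z_0\in W^{2,\infty}(\Omega)$ with $\partial z_0/\partial\nu=0$ lies in $D(A)$ and that this trace norm is dominated by $C\|z_0\|_{W^{2,q}(\Omega)}$, which is what the stated right-hand side uses. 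With these two routine additions your proposal is a complete proof; citing Ref.~\refcite{CT21RWA} verbatim, as the paper does, is of course also legitimate.
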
}

The following fundamental lemma is well-known and will be used later.
{\begin{lemma}\label{l2.3a} Suppose that $p\ge1$, $k\in\{1,2\}$, $M>0$, and
\bess
 \begin{cases}
 q_k\in\left[1,\frac{np}{n-kp}\right)\;\;&{\rm when}\; \; p\le\frac{n}{k},\\
 q_k\in\left[1,\infty\right]\;\;&{\rm when}\; \; p>\frac{n}{k}.
 \end{cases}
 \eess
Let $T>0$ be fixed, and $z$ be a classical solution of
 \bess \begin{cases}
 z_t=\Delta z-z+f,\;&x\in\Omega,\;\;t\in(0,T),\\[0.1mm]
 \dd\frac{ \partial z}{\partial\nu}=0,\;&x\in\partial\Omega,\ t\in(0,T),\\[0.1mm]
z(x,0)=z_0(x),\ &x\in\Omega,
 \end{cases}
 \eess
where $f\in C(\Omega\times(0,T))$, and $z_0\in W^{2,\infty}(\Omega)$ satisfying $\frac{\partial z_0}{\partial\nu}=0$ on $\partial\Omega$. Then one can find $C_k=C(p,q_k,M)>0$ such that if
\bess
\|f(\cdot,t)\|_p\le M,\;\;\forall\,t\in(0,T),
\eess
then
\bess
\|\nabla z(\cdot,t)\|_{q_1}\le C_1,\;\;\;\|z(\cdot,t)\|_{q_2}\le C_2,\;\;\forall\,t\in(0,T).
\eess
\end{lemma}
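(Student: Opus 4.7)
The plan is to represent $z$ through the variation-of-constants formula associated with the Neumann heat semigroup $(e^{t\Delta})_{t\ge 0}$ on $\Omega$, and then invoke the standard $L^p$--$L^q$ smoothing estimates for this semigroup, aided crucially by the exponential decay produced by the absorption term $-z$. Writing
\[
z(\cdot,t) \;=\; e^{t(\Delta-1)} z_0 + \int_0^t e^{(t-s)(\Delta-1)} f(\cdot,s)\, ds,
\]
the problem reduces to bounding the free-evolution contribution and the Duhamel integral separately, each in the norm $\|\cdot\|_{q_2}$ and in the seminorm $\|\nabla\cdot\|_{q_1}$.

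Next I would appeal to Winkler's well-known smoothing estimates for the Neumann heat semigroup on a bounded smooth domain, namely
\[
\|e^{t(\Delta-1)} g\|_{q_2} \le C\bigl(1 + t^{-\frac{n}{2}(\frac{1}{p}-\frac{1}{q_2})}\bigr) e^{-t} \|g\|_p
\]
and
\[
\|\nabla e^{t(\Delta-1)} g\|_{q_1} \le C\bigl(1 + t^{-\frac{1}{2}-\frac{n}{2}(\frac{1}{p}-\frac{1}{q_1})}\bigr) e^{-t} \|g\|_p,
\]
valid for $g \in L^p(\Omega)$ under the stated ranges of $q_k$. Together with the hypothesis $\|f(\cdot,s)\|_p \le M$, the Duhamel term is dominated by $C M \int_0^t \bigl(1 + (t-s)^{-\sigma_k}\bigr) e^{-(t-s)}\, ds$, where $\sigma_1 = \tfrac{1}{2} + \tfrac{n}{2}(\tfrac{1}{p}-\tfrac{1}{q_1})$ and $\sigma_2 = \tfrac{n}{2}(\tfrac{1}{p}-\tfrac{1}{q_2})$.

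The technical heart of the argument is to check that $\sigma_k < 1$ under the admissible ranges of $q_k$, which guarantees convergence of the time integral uniformly in $t \in (0,T)$ and, thanks to the exponential factor, even uniformly in $T$. When $p \le n/k$, the requirement $q_k < \frac{np}{n-kp}$ is exactly equivalent to $\tfrac{1}{p}-\tfrac{1}{q_k} < \tfrac{k}{n}$, which gives $\sigma_k < 1$; when $p > n/k$ one has $\tfrac{1}{p}-\tfrac{1}{q_k} \le \tfrac{1}{p} < \tfrac{k}{n}$ for every $q_k \in [1,\infty]$, again yielding $\sigma_k < 1$. A change of variable then bounds the integral by a finite gamma-type constant depending only on $p$, $q_k$ and $n$.

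Finally, the free-evolution contribution $e^{t(\Delta-1)} z_0$ is controlled by applying the same semigroup estimates to $g = z_0$; since $z_0 \in W^{2,\infty}(\Omega)$ and $\Omega$ is bounded, there is no singularity at $t=0$, and the embedding $W^{2,\infty}(\Omega) \hookrightarrow L^\infty(\Omega) \hookrightarrow L^p(\Omega)$ absorbs the dependence on $z_0$ into the constant. Assembling the two pieces yields the sought bounds $\|\nabla z(\cdot,t)\|_{q_1} \le C_1$ and $\|z(\cdot,t)\|_{q_2} \le C_2$ with $C_k = C(p,q_k,M)$. I do not anticipate any serious obstacle here: the only genuinely delicate step is the exponent arithmetic verifying $\sigma_k < 1$, which is the reason the critical borderline cases $p = n/k$ and $q_k = \frac{np}{n-kp}$ are excluded from the hypotheses.
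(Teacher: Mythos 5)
Your argument is correct and follows essentially the same route as the paper, which simply cites the standard $L^p$--$L^q$ smoothing estimates of the Neumann heat semigroup (as detailed in Lemmas 3.1 and 3.2 of Ref.~20, Fujie--Ito--Winkler--Yokota); your Duhamel decomposition, the exponent check $\sigma_k<1$, and the treatment of the $z_0$-term via its $W^{2,\infty}$ regularity are exactly the ingredients behind that citation. No gap to report.
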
}
\begin{proof}
It can be proved by using the well-known smoothing properties of the Neumann heat semigroup\cite{Win-jde10}. Please refer to Lemma 3.1 and Lemma 3.2 of Ref. \refcite{FIWY16} for the details.
\end{proof}

When $\kappa=0$, we obtain the uniform-in-time $L^1$-boundedness of $u$ and $w$.
\begin{lemma}\label{l2.4z} Let $\kappa=0$, $n\ge2$. Then there exists $C>0$ such that
\bes
\int_\Omega u(x,t){\rm d}x\le C,\;\; \forall\,t\in(0,T_{\rm max}),\label{2.1y}
\ees
and
\bes
\int_\Omega w(x,t){\rm d}x\le C,\;\; \forall\,t\in(0,T_{\rm max}).\label{2.2y}
\ees
\end{lemma}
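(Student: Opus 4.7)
My plan is to derive a single linear ODE for the quantity $\int_\Omega u + \int_\Omega w$, exploiting a cancellation between the nonlinear reaction term $uw$ in the $u$-equation and the corresponding production term it generates, via $v$, in the $w$-equation. Once that ODE is in hand, the uniform $L^1$-bounds \eqref{2.1y} and \eqref{2.2y} follow by a standard comparison argument, using that $u,w\ge 0$.

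First I would integrate the first equation of \eqref{1.1} over $\Omega$. The divergence terms $\nabla\cdot(D(u)\nabla u)$ and $\chi\nabla\cdot(u\nabla v)$ vanish by the homogeneous Neumann boundary conditions, yielding
\bess
\dv\int_\Omega u\,\dx = -\int_\Omega u\,\dx - \int_\Omega uw\,\dx + \int_\Omega \varphi\,\dx.
\eess
Next, since $\kappa=0$, the second equation is elliptic and integration gives directly
\bess
\int_\Omega v\,\dx = \int_\Omega uw\,\dx,
\eess
which identifies $\|v(\cdot,t)\|_1$ with $\int_\Omega uw\,\dx$. Integrating the third equation then produces
\bess
\dv\int_\Omega w\,\dx = -\int_\Omega w\,\dx + \int_\Omega v\,\dx = -\int_\Omega w\,\dx + \int_\Omega uw\,\dx.
\eess

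Summing the two ODEs, the $\int_\Omega uw$ terms cancel, and using $\|\varphi\|_{L^\infty(\Omega\times(0,\infty))}\le \varphi_*$, I arrive at
\bess
\dv\left(\int_\Omega u\,\dx + \int_\Omega w\,\dx\right) + \left(\int_\Omega u\,\dx + \int_\Omega w\,\dx\right) \le \varphi_*|\Omega|.
\eess
By an ODE comparison (or Gronwall's inequality),
\bess
\int_\Omega u(\cdot,t)\,\dx + \int_\Omega w(\cdot,t)\,\dx \le \max\left\{\int_\Omega u_0\,\dx + \int_\Omega w_0\,\dx,\;\varphi_*|\Omega|\right\}
\eess
for all $t\in(0,T_{\rm max})$. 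Since both $u$ and $w$ are nonnegative, this immediately yields \eqref{2.1y} and \eqref{2.2y} with the choice $C = \max\{\|u_0\|_1 + \|w_0\|_1,\, \varphi_*|\Omega|\}$.

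I do not anticipate any serious technical obstacle here: the only subtlety is noticing that the nonlinear infection term $uw$, which one could fear would destroy any $L^1$ control, is actually cancelled exactly when the $u$- and $w$-equations are added (through the elliptic identity $\int v = \int uw$). The remaining ingredients --- vanishing of flux terms on $\partial\Omega$, nonnegativity of $u,v,w$ from Lemma \ref{l2.1}, and boundedness of $\varphi$ --- are already in place.
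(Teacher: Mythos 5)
Your argument is correct, and it takes a route that is genuinely different from (and simpler than) the one in the paper. The paper first bounds $\int_\Omega u\,{\rm d}x$ by simply discarding the nonpositive term $-\int_\Omega uw\,{\rm d}x$, then integrates that differential inequality over time windows $(t,t+\tau)$ to obtain a space-time bound $\int_t^{t+\tau}\!\!\int_\Omega uw\,{\rm d}x\,{\rm d}s\le C_1$, identifies $\int_\Omega v\,{\rm d}x=\int_\Omega uw\,{\rm d}x$ from the elliptic equation, and finally controls $\int_\Omega w\,{\rm d}x$ through the ODE $\frac{\rm d}{{\rm d}t}\int_\Omega w+\int_\Omega w=\int_\Omega v$ by invoking an ODE comparison lemma for forcings that are only bounded in time-window integrals (Lemma 2.2 of Ref.~\refcite{WW-M3AS2020}). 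You instead observe that, thanks to the same elliptic identity $\int_\Omega v=\int_\Omega uw$, the loss term $-\int_\Omega uw$ in the $u$-balance cancels exactly against the gain term $+\int_\Omega v$ in the $w$-balance, so that $y(t):=\int_\Omega u+\int_\Omega w$ satisfies the single pointwise-in-time inequality $y'+y\le\varphi_*|\Omega|$; this is precisely the cancellation trick the paper itself uses for the fully parabolic case in Lemma~\ref{l2.5}, transplanted to $\kappa=0$ via the second and third equations. Your version is more elementary (only a standard scalar comparison/Gronwall step is needed) and yields the explicit constant $C=\max\{\|u_0\|_1+\|w_0\|_1,\varphi_*|\Omega|\}$; the paper's longer route produces, as a by-product, the uniform space-time estimate on $\int\!\!\int uw$ (equivalently on $\|v\|_1$ over unit time intervals), though that intermediate estimate is not used elsewhere in the paper. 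All technical prerequisites you rely on (vanishing boundary fluxes under the Neumann conditions, nonnegativity of $u,v,w$, continuity of the masses up to $t=0$) are supplied by Lemma~\ref{l2.1}, so there is no gap.
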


\begin{proof}
Integrating the first equation of \eqref{1.1} over $\Omega$ shows that
\bes
 \frac{\rm d}{{\rm d}t}\int_\Omega u{\rm d}x&\le& \int_\Omega \varphi{\rm d}x-\int_\Omega u{\rm d}x-\int_\Omega uw{\rm d}x\nonumber\\[1mm]
 &\le&\int_\Omega \varphi{\rm d}x-\int_\Omega u{\rm d}x,\;\; \forall\,t\in(0,T_{\rm max}),\label{2.3y}
\ees
and hence
\bess
\int_\Omega u{\rm d}x\le \max\{\|u_0\|_{L^1(\Omega)},\ \varphi_*|\Omega|\},\;\; \forall\,t\in(0,T_{\rm max}),
\eess
where $\varphi_*:=\|\varphi\|_{L^\infty(\Omega\times(0,\infty))}$, which implies \eqref{2.1y}. Integrating \eqref{2.3y} upon $(t,t+\tau)$ with $\tau:=\min\{1,\ \frac{T_{\rm max}}{2}\}$ for $t\in(0,T_{\rm max}-\tau)$ and using \eqref{2.1y} implies that
 \bess
\int_t^{t+\tau}\!\!\int_\Omega uw{\rm d}x{\rm d}s&\le&\int_\Omega u{\rm d}x+\int_t^{t+\tau}\!\!\int_\Omega\varphi{\rm d}x{\rm d}s\\[1mm]
 &\le&\int_\Omega u{\rm d}x+\varphi_*|\Omega|\le C_1,\;\; \forall\,t\in(0,T_{\rm max}-\tau)
\eess
with some $C_1>0$. Integrating the second equation of \eqref{1.1} we find that $\int_\Omega v{\rm d}x=\int_\Omega uw{\rm d}x$ and hence
\bes
\int_t^{t+\tau}\!\!\int_\Omega v{\rm d}x{\rm d}s=\int_t^{t+\tau}\!\!\int_\Omega uw{\rm d}x{\rm d}s\le C_1,\;\; \forall\, t\in(0,T_{\rm max}-\tau).\label{2.4y}
\ees
Finally we integrate the third equation of \eqref{1.1} on $\Omega$ to get
\bess
\frac{\rm d}{{\rm d}t}\int_\Omega w{\rm d}x+\int_\Omega w{\rm d}x=\int_\Omega v{\rm d}x,\;\; \forall\,t\in(0,T_{\rm max}),
\eess
which combined with \eqref{2.4y} and the ODE comparison (Lemma 2.2 of Ref. \refcite{WW-M3AS2020}) yields \eqref{2.2y}.
\end{proof}

When $\kappa=1$, the second equation of \eqref{1.1} is parabolic, we can combine it with the first equation to eliminate the nonlinearity $uw$ and then obtain uniform-in-time $L^1$-boundedness of $u$ and $v$.

\begin{lemma}\label{l2.5} Let $\kappa=1$, $n\ge2$. Then there exists $C>0$ such that
\bes
\int_\Omega u(x,t){\rm d}x\le C,\;\; \forall\,t\in(0,T_{\rm max})\label{2.5y}
\ees
and
\bes
\int_\Omega v(x,t){\rm d}x\le C,\;\; \forall\,t\in(0,T_{\rm max}).\label{2.6y}
\ees
\end{lemma}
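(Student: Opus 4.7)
The plan is to exploit the cancellation of the nonlinear term $uw$ between the first two equations of \eqref{1.1}, exactly as the author hints in the sentence immediately preceding the lemma.

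First, I would integrate the $u$-equation over $\Omega$. Using the Neumann boundary conditions to kill the divergence terms, and dropping the nonpositive contribution from $-uw$ only when convenient, one obtains
\bess
\dv\int_\Omega u\dx = -\int_\Omega u\dx - \int_\Omega uw\dx + \int_\Omega\varphi\dx.
\eess
Next I would integrate the $v$-equation over $\Omega$ (this is where $\kappa=1$ is essential, so $v_t$ actually appears) to get
\bess
\dv\int_\Omega v\dx = -\int_\Omega v\dx + \int_\Omega uw\dx.
\eess
Adding the two identities cancels the $\int_\Omega uw\,dx$ term and yields
\bess
\dv\int_\Omega (u+v)\dx + \int_\Omega (u+v)\dx = \int_\Omega\varphi\dx \le \varphi_*|\Omega|,
\eess
so by an elementary ODE comparison (e.g.\ Lemma 2.2 of Ref.~\refcite{WW-M3AS2020}, already invoked in the proof of Lemma~\ref{l2.4z}),
\bess
\int_\Omega (u+v)(\cdot,t)\dx \le \max\{\|u_0\|_1+\|v_0\|_1,\ \varphi_*|\Omega|\}\qquad\text{for all }t\in(0,T_{\rm max}).
\eess
Since $u,v\ge 0$ in $\bar\Omega\times(0,T_{\rm max})$ by Lemma~\ref{l2.1}, both \eqref{2.5y} and \eqref{2.6y} follow immediately.

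There is essentially no obstacle here: the only things to be careful about are that the boundary integrals coming from $\nabla\cdot(D(u)\nabla u)$ and $\nabla\cdot(u\nabla v)$ do vanish under the no-flux conditions (standard for classical solutions given by Lemma~\ref{l2.1}), and that in the $\kappa=1$ setting we are genuinely allowed to integrate $v_t$ against time because $v\in C^0(\bar\Omega\times[0,\infty))\cap C^{2,1}(\bar\Omega\times(0,T_{\rm max}))$. Compared with Lemma~\ref{l2.4z}, the key difference is that we do not need a separate argument for $\int_\Omega w\,dx$ at this stage; the $L^1$-estimate of $w$ is not part of the statement and, if needed later, could be recovered exactly as in \eqref{2.4y} by integrating the $w$-equation and using the bound on $\int_\Omega v\,dx$ just established.
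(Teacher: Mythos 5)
Your proposal is correct and follows essentially the same route as the paper: integrating the first two equations of \eqref{1.1}, cancelling the $\int_\Omega uw\,{\rm d}x$ terms, and applying an ODE comparison to the resulting differential inequality for $\int_\Omega (u+v)\,{\rm d}x$, then using nonnegativity of $u$ and $v$. No gaps.
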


\begin{proof}
From the first two equations in \eqref{1.1}, it is easy to infer that
\bess
\frac{\rm d}{{\rm d}t}\left(\int_\Omega u{\rm d}x+\int_\Omega v{\rm d}x\right)+\left(\int_\Omega u{\rm d}x+\int_\Omega v{\rm d}x\right)\le\int_\Omega\varphi{\rm d}x,\;\; \forall\,t\in(0,T_{\rm max}),
\eess
from which we get
\bess
\int_\Omega u{\rm d}x+\int_\Omega v{\rm d}x\le \max\{\|u_0+v_0\|_{L^1(\Omega)},\varphi_*|\Omega|\},\;\; \forall\,t\in(0,T_{\rm max}).
\eess
This provides \eqref{2.5y} and \eqref{2.6y}.
\end{proof}

Next, we shall present a conditional $L^p$ estimate for $u$ which is ensured by the Gagliardo-Nirenberg inequality and \eqref{2.1y} when $\kappa=0$ or \eqref{2.5y} when $\kappa=1$.

\begin{lemma}\label{l2.6} Let $\kappa\in\{0,\ 1\}$ and $n\ge2$. Assume that $m\ge0$, $p>1$ and $k>1$ satisfy
\bes
k<\min\left\{\frac{n(p+m-1)}{n-2},\ p+m+\frac2n-1\right\}.\label{2.7y}
\ees
Then, for any $\ep>0$, there exists $C(\ep)>0$ such that
\bess
\int_\Omega u^k{\rm d}x\le \ep\int_\Omega u^{p+m-3}|\nabla u|^2{\rm d}x+C(\ep)\;\;\;\;{\rm for\ all}\ t\in(0,T_{\rm max}).
\eess
\end{lemma}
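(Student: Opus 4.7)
The approach is to substitute $f := u^{(p+m-1)/2}$ and convert the desired estimate into a Gagliardo-Nirenberg interpolation for $f$. Since $|\nabla f|^2 = \frac{(p+m-1)^2}{4}\,u^{p+m-3}|\nabla u|^2$, the gradient term on the right-hand side equals $\frac{4}{(p+m-1)^2}\|\nabla f\|_2^2$. Setting $r := \frac{2k}{p+m-1}$ and $q := \frac{2}{p+m-1}$, I observe that $\|f\|_r^r = \int_\Omega u^k\dx$ and $\|f\|_q^q = \int_\Omega u\dx$, and the latter is uniformly bounded in time thanks to Lemma \ref{l2.4z} when $\kappa=0$ and Lemma \ref{l2.5} when $\kappa=1$.

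Next I would invoke the Gagliardo-Nirenberg inequality in the Poincar\'e-corrected form
$$\|f\|_r \le C\bigl(\|\nabla f\|_2^{a}\,\|f\|_q^{1-a} + \|f\|_q\bigr),$$
where $a\in[0,1]$ is determined by $\tfrac{1}{r} = a\bigl(\tfrac{1}{2} - \tfrac{1}{n}\bigr) + \tfrac{1-a}{q}$. A short calculation yields
$$a \;=\; \frac{(p+m-1)(k-1)}{k\bigl(p+m-2+\tfrac{2}{n}\bigr)}, \qquad ar \;=\; \frac{2(k-1)}{p+m-2+\tfrac{2}{n}}.$$
The condition $k < \frac{n(p+m-1)}{n-2}$ (interpreted as vacuous when $n=2$) is exactly the admissibility requirement $a\le 1$ (equivalently $r \le \frac{2n}{n-2}$ when $n\ge 3$), while $k < p+m+\frac{2}{n}-1$ translates precisely into $ar < 2$. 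The latter, combined with $k>1$, automatically forces $p+m-2+\frac{2}{n}>0$, so the denominator appearing in the formula for $a$ is positive and the interpolation is admissible.

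To conclude, I would raise the Gagliardo-Nirenberg inequality to the power $r$, absorb the uniformly bounded factor $\|f\|_q$, and apply Young's inequality with the conjugate exponents $\frac{2}{ar}$ and $\frac{2}{2-ar}$, obtaining
$$\int_\Omega u^k\dx \;=\; \|f\|_r^r \;\le\; C\|\nabla f\|_2^{ar} + C \;\le\; \ep'\|\nabla f\|_2^2 + C(\ep'),$$
after which rescaling $\ep'$ by the constant $\frac{4}{(p+m-1)^2}$ delivers the stated inequality. The proof is largely bookkeeping; the one substantive check is the algebraic equivalence of the two numerical hypotheses on $k$ with the admissibility range of the Gagliardo-Nirenberg inequality and the absorbability condition $ar<2$ for Young's inequality, which I expect to be the main (but still routine) obstacle.
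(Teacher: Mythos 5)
Your proposal is correct and follows essentially the same route as the paper: substituting $f=u^{(p+m-1)/2}$, applying the Gagliardo--Nirenberg inequality with the small exponent $q=\frac{2}{p+m-1}$ controlled by the $L^1$ bounds of Lemma \ref{l2.4z} (resp. Lemma \ref{l2.5}), and absorbing via Young's inequality, with your exponent $a$ and product $ar$ coinciding with the paper's $\alpha$ and $\frac{2k\alpha}{p+m-1}$. Your explicit verification that \eqref{2.7y} is equivalent to $a<1$ and $ar<2$ (and that $k>1$ guarantees $p+m-2+\frac2n>0$) is exactly the algebra the paper asserts without detail, so nothing is missing.
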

\begin{proof}
Let
\bess
\alpha:=\frac{n(p+m-1)/2-n(p+m-1)/(2k)}{1-n/2+n(p+m-1)/2}.
\eess
In view of \eqref{2.7y}, one can derive that
\bes
\alpha\in(0,1),\;\;\; {\rm and}\;\;\frac{2k\alpha}{p+m-1}\in(0,2).\label{2.8y}
\ees
Based on \eqref{2.8y}, by using the Gagliardo-Nirenberg inequality and Young's inequality as well as \eqref{2.1y} when $\kappa=0$ (or \eqref{2.5y} when $\kappa=1$), for any $\ep>0$ one can find $C_1,C_2, C_3(\ep)>0$ such that
\bess
\int_\Omega u^k{\rm d}x&=&\big\|u^{\frac{p+m-1}{2}}\big\|_{\frac{2k}{p+m-1}}^{\frac{2k}{p+m-1}}\nonumber\\
&\le&C_1\big\|\nabla u^{\frac{p+m-1}{2}}\big\|_2^{\frac{2k\alpha}{p+m-1}}\big\|u^{\frac{p+m-1}{2}}\big\|_{\frac{2}{p+m-1}}^{\frac{2k(1-\alpha)}{p+m-1}}
+C_1\big\|u^{\frac{p+m-1}{2}}\big\|_{\frac{2}{p+m-1}}^{\frac{2k}{p+m-1}}\nonumber\\
&\le&C_2\big\|\nabla u^{\frac{p+m-1}{2}}\big\|_2^{\frac{2k\alpha}{p+m-1}}+C_2\nonumber\\
&\le&\frac{4\ep}{(p+m-1)^2}\big\|\nabla u^{\frac{p+m-1}{2}}\big\|_2^2+C_3(\ep)\nonumber\\
&=&\ep\int_\Omega u^{p+m-3}|\nabla u|^2{\rm d}x+C_3(\ep),\;\; \forall\,t\in(0,T_{\rm max}),
\eess
which gives the desired result.
\end{proof}

\section{Global boundedness versus  finite time blow-up for parabolic-elliptic-parabolic system ($\kappa=0$)}
\setcounter{equation}{0} {\setlength\arraycolsep{2pt}

\subsection{Global boundedness}

In this subsection, we always assume that the initial data satisfies \eqref{1.3z}.

We first derive the uniform-in-time $L^p$-boundedness of $u$ for any $p>1$ under the conditions of Theorem \ref{t1.1} when $n\ge3$.

\begin{lemma}\label{l3.3}
Let $\kappa=0$, $n\ge3$, and $m>2+\frac n2-\frac 2n$. Assume that $D(h)\ge k_Dh^{m-1}$ for all $h\ge0$ and some constant $k_D>0$. Then, for any given $p>1$, there exist $q>\frac{p+m+2/n-1}{m+2/n-1}$ and {$C= C(p)>0$} such that
\bess
\|u(\cdot,t)\|_p\le C,\;\;\forall\,t\in(0,T_{\rm max}),
\eess
and
 \bess
\int_0^t {\rm e}^{-\frac q2(t-s)}\|v(\cdot,s)\|_{W^{2,q}(\Omega)}^q {\rm d}s\le C,\;\;\forall\,t\in(0,T_{\rm max}).
\eess
\end{lemma}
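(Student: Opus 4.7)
The plan is to run an $L^p$-energy estimate on the first equation of \eqref{1.1} and close the resulting coupling via the elliptic identity $\Delta v=v-uw$ (available because $\kappa=0$), the Gagliardo--Nirenberg-type absorption of Lemma \ref{l2.6}, the maximal Sobolev regularity of Lemma \ref{l2.2a} applied to the $w$-equation, and the uniform $L^1$-bounds of Lemma \ref{l2.4z}.

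Testing the first equation by $u^{p-1}$ and invoking $D(u)\ge k_D u^{m-1}$, after two integrations by parts on the chemotaxis term and substitution of $\Delta v=v-uw$ (the resulting nonpositive term $-\tfrac{\chi(p-1)}{p}\int u^p v$ being discarded), I arrive at
$$\frac{1}{p}\frac{\rm d}{{\rm d}t}\int_\Omega u^p+k_D(p-1)\int_\Omega u^{p+m-3}|\nabla u|^2+\int_\Omega u^p\le \frac{\chi(p-1)}{p}\int_\Omega u^{p+1}w+C.$$
The heart of the matter is to control the cross term $\int u^{p+1}w$. Writing $u^{p+1}w=u^p\,(uw)$, H\"older with the dual pair $(q',q)$ followed by Young yields
$$\int_\Omega u^{p+1}w\le \eta\int_\Omega u^{pq'}+C_\eta\|uw\|_q^q,\qquad q'=\frac{q}{q-1}.$$
The stated condition $q>(p+m+2/n-1)/(m+2/n-1)$ is precisely $pq'<p+m+2/n-1$, which places $k=pq'$ inside the admissible range of Lemma \ref{l2.6}; the secondary requirement $pq'<n(p+m-1)/(n-2)$ reduces to $p+m>2-2/n$, automatic from $m>2+n/2-2/n$. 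Consequently $\eta\int u^{pq'}$ is absorbed into the diffusion. Meanwhile, $L^q$ elliptic regularity for $-\Delta v+v=uw$ gives $\|uw\|_q\le C\|v\|_{W^{2,q}}$, leading to
$$\frac{\rm d}{{\rm d}t}\int_\Omega u^p+c_1\int_\Omega u^p\le c_2\|v\|_{W^{2,q}}^q+c_3.$$

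Integrating against the exponential weight $e^{-c_1(t-s)}$ converts this to $\int_\Omega u^p(\cdot,t)\le C+c_2\int_0^t e^{-c_1(t-s)}\|v(\cdot,s)\|_{W^{2,q}}^q{\rm d}s$. Choosing $c_1\ge q/2$ (which is harmless as one may strengthen the ODE decay) and noting $e^{-c_1(t-s)}\le e^{-q(t-s)/2}$, both claimed bounds will follow once the $e^{-q(t-s)/2}$-weighted time integral of $\|v\|_{W^{2,q}}^q$ is bounded uniformly. For that integral I would feed the estimate back through $\|v\|_{W^{2,q}}\le C\|uw\|_q$, split $\|uw\|_q\le \|u\|_\sigma\|w\|_\rho$ by H\"older, control $\|w\|_\rho$ via Sobolev embedding of the $W^{2,q}$-bound of $w$ supplied by Lemma \ref{l2.2a} (with $\delta=1/2$) applied to $w_t=\Delta w-w+v$, and use the baseline $L^1$-bounds of $u$ and $w$ from Lemma \ref{l2.4z} as the seed. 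A bootstrap in $p$, starting from the already-available $\|u\|_1$-bound, drives the estimate up to every $p>1$.

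The main obstacle is this final closure step: the H\"older exponents $\sigma,\rho$ in the split of $\|uw\|_q$ must be tuned so that $\|u\|_\sigma$ is controllable by the $L^p$-bound being established (or a lower one from the bootstrap), so that $\|w\|_\rho$ stays within the subcritical Sobolev range $W^{2,q}\hookrightarrow L^\rho$, and so that the resulting coupling constant is subcritical enough to permit absorption. The threshold $m>2+n/2-2/n$ supplies exactly the slack needed for a strict gap in Lemma \ref{l2.6} and for the Sobolev feedback on $w$ to stay subcritical, which is what makes the elliptic-parabolic cascade $u\to uw\to v\to w\to uw$ close self-consistently.
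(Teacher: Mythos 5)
Your energy identity and choice of tools are the right ones, and your substitution $\Delta v=v-uw$ in the chemotaxis term (instead of the paper's Young inequality against $\|\Delta v\|_q^q$) is a harmless variant --- it is exactly how the paper treats the two-dimensional case in Lemma~\ref{l3.4a}. The genuine gap sits precisely at the point you yourself call ``the main obstacle'': the closure of the cascade $uw\to v\to w\to uw$ is never carried out, and the two mechanisms you offer do not work as stated. Controlling the factor $\|u\|_\sigma$ in the splitting of $\|uw\|_q$ ``by the $L^p$-bound being established'' is circular, and the alternative ``bootstrap in $p$'' is neither formulated nor verified (it is not clear what quantified gain in integrability each step would produce). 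The paper needs neither: after Young, the term $\|u\|_{q\sigma'}^{q\sigma'}$ is absorbed into the dissipation $\int_\Omega u^{p+m-3}|\nabla u|^2$ via Lemma~\ref{l2.6}, seeded only by the $L^1$-bound of Lemma~\ref{l2.4z}, and the whole content of the hypothesis $m>2+\frac n2-\frac2n$ is that one can simultaneously choose $q>\frac{p+m+2/n-1}{m+2/n-1}$, $\sigma$ and $\rho$ so that both $pq'$ and $q\sigma'$ lie strictly below $p+m+\frac2n-1$ while the Sobolev and Gagliardo--Nirenberg constraints linking $q,\sigma,\rho$ in the $w$-feedback hold (the parameter system \eqref{3.4a}--\eqref{3.7}). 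You assert that the threshold ``supplies exactly the slack needed'' but exhibit no such exponents, so the role of the hypothesis on $m$ --- the heart of the lemma --- is not established. Note also that in your scheme the dissipation has already been discarded when you reach the stage where $\|u\|_{q\sigma'}$ reappears, whereas the paper keeps the time-weighted dissipation $\int_0^t e^{-\frac q2(t-s)}\int_\Omega u^{p+m-3}|\nabla u|^2\,{\rm d}x\,{\rm d}s$ on the left of \eqref{3.16} exactly so that this term can be absorbed there.

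A second, related flaw is your reading of Lemma~\ref{l2.2a}: maximal Sobolev regularity applied to $w_t=\Delta w-w+v$ yields only an exponentially weighted time-integral bound of $\|\Delta w\|_\rho^q$ in terms of the weighted time integral of $\|v\|_\rho^q$; it does not supply a pointwise-in-time $W^{2,q}$-bound on $w$ that could be fed into a Sobolev embedding as your sketch suggests. Consequently the feedback must be run entirely on the weighted integrals $\int_0^t e^{-\frac q2(t-s)}(\cdots)\,{\rm d}s$: one interpolates $\|w\|_{q\sigma}^{q\sigma}\le \frac1{2C_2}\|\Delta w\|_\rho^{q}+C$ pointwise in time (Gagliardo--Nirenberg against $\|\Delta w\|_\rho$ and $\|w\|_1$, with the exponent condition $\sigma\beta<1$ forced by the choice of $\rho$), uses the embedding $\|v\|_\rho\le C\|v\|_{W^{2,q}}$ (requiring $\frac n\rho>\frac nq-2$), and finally absorbs $\frac12\int_0^t e^{-\frac q2(t-s)}\|v\|_{W^{2,q}}^q\,{\rm d}s$ into the left-hand side to arrive at \eqref{3.13}. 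None of this self-absorption structure, nor the verification that the exponent constraints are compatible, appears in your proposal; these are the steps that actually prove the lemma.
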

\begin{proof} The proof shall be split into three steps.

{\it Step 1: Selecting parameters}. For any fixed $p>1$, we define $q_*:=\frac{4}{n+4}\left(p+m+\frac2n-1\right)$. Due to $m>2+\frac n2-\frac 2n$, it is easy to see that $q_*>1$ and
\bes
\frac{2+n}{m+2/n-1}<2\label{3.4a}
\ees
as well as
\bes
\frac{n+4}4<m+\frac2n-1.\label{3.4}
\ees
Then, we have from \eqref{3.4a} that
\bess
2\left(p+m+\frac2n-1\right)+n\left(m+\frac2n-1\right)>\frac{2+n}{m+2/n-1}\left(p+m+\frac2n-1\right)+n,
\eess
i.e.,
\bes
\left(p+m+\frac2n-1\right)\left(2+\frac{n(m+2/n-1)}{p+m+2/n-1}\right)
>\frac{(2+n)\left(p+m+2/n-1\right)}{m+2/n-1}+n.\quad\;\label{3.9a}
\ees
Moreover, it follows from \eqref{3.4} that
\bess
q_*>\frac{p+m+\frac2n-1}{m+2/n-1}.
\eess
Thus, due to \eqref{3.9a} one can take $q:\,\frac{p+m+2/n-1}{m+2/n-1}<q<q_*$ such that
\bes
\left(p+m+\frac2n-1\right)\left(2+\frac nq\right)>q(2+n)+n.\label{3.10a}
\ees
Taking $q':=\frac q{q-1}$, then we also have
\bes
pq'<p+m+\frac2n-1.\label{3.5}
\ees
From $q<q_*$ and \eqref{3.10a}, it is easy to see that
\bess
\max\left\{\frac{q(n+4)}{4},\ \frac{q(2+n)+n}{2+{n}/q}\right\}<p+m+\frac2n-1.
\eess
Therefore, we can fix $\sigma>1$ such that the number $\sigma':=\frac{\sigma}{\sigma-1}$ satisfies
\bes
\max\left\{\frac{q(n+4)}{4},\; \frac{q(2+n)+n}{2+{n}/q}\right\}<q\sigma'<p+m+\frac2n-1.\label{3.6}
\ees
It follows from the left inequality in \eqref{3.6} that $\sigma'>\max\left\{\frac{n+4}{4},\ \frac{2+n+{n}/q}{2+{n}/q}\right\}$, i.e.,
 \bess
\sigma<\min\left\{\frac{n+4}{n},\ \frac{2+n+{n}/q}{n}\right\}.
 \eess
Then we have
\bess
\max\left\{\frac nq-2,\ 0\right\}<\min\left\{\frac{n}{q\sigma}+2,\ 2+n+\frac nq-n\sigma\right\}.
\eess
Hence, we can fix $\rho>1$ such that
\bes
\frac nq-2<\frac{n}{\rho}<\min\left\{\frac{n}{q\sigma}+2,\ 2+n+\frac nq-n\sigma\right\}.\label{3.7}
\ees

{\it Step 2: Estimating $\int_0^t {\rm e}^{-\frac q2(t-s)}\|v(\cdot,s)\|^q_{W^{2,q}(\Omega)}{\rm d}s$}.
Making use of the third equation of \eqref{1.1} and the maximal Sobolev regularity theory in Lemma \ref{l2.2a}, one can find $C_1>0$ such that, for all $t\in(0,T_{\rm max})$ we have
\bes
\int_0^t{\rm e}^{-\frac q2(t-s)}\|\Delta w(\cdot,s)\|_\rho^q {\rm d}s\le C_1\int_0^t{\rm e}^{-\frac q2(t-s)}\|v(\cdot,s)\|_\rho^q {\rm d}s+C_1\|w_0\|_{W^{2,\rho}(\Omega)}^q.\label{3.10}
\ees
According to the left inequality in \eqref{3.7}, we can use the Sobolev embedding theorem  to derive that
\bes
C_1\|v\|_\rho^q\le C_2\|v\|_{W^{2,q}(\Omega)}^q,\;\; \forall\,t\in(0,T_{\rm max})\label{3.11}
\ees
for some $C_2>0$. Taking
\bess
\beta:=\frac{n-{n}/(q\sigma)}{2-{n}/{\rho}+n}.
\eess
It follows from the right inequality in \eqref{3.7} that
\bess
\beta\in(0,1),\;\;\;{\rm and}\;\;\sigma\beta\in(0,1).
\eess
Then, by the Gagliardo-Nirenberg inequality and \eqref{2.2y} as well as Young's inequality, there exist $C_3,C_4,C_5>0$ such that
 \bes
\int_\Omega w^{q\sigma}{\rm d}x&=&\|w\|_{q\sigma}^{q\sigma}\nonumber\\
&\le&C_3\|\Delta w\|_\rho^{q\sigma\beta}\|w\|_1^{q\sigma(1-\beta)}+C_1\|w\|_1^{q\sigma}\nonumber\\
&\le&C_4\|\Delta w\|_\rho^{q\sigma\beta}+C_4\nonumber\\
&\le&\frac{1}{2C_2}\|\Delta w\|_\rho^q+C_5,\;\; \forall\,t\in(0,T_{\rm max}).\label{3.9}
\ees
Recalling the second equation of \eqref{1.1}, in light of the classical $L^p$ theory of elliptic equations and Young's inequality, there exist $C_6,C_7>0$ such that
\bes
\|v\|_{W^{2,q}(\Omega)}^q&\le& C_6\|uw\|_q^q
\le C_7\|u\|_{q\sigma'}^{q\sigma'}+\|w\|_{q\sigma}^{q\sigma},\;\; \forall\, t\in(0,T_{\rm max}).\label{3.12}
\ees
In view of \eqref{3.12} and \eqref{3.10}-\eqref{3.9}, we have
\bess
&&\int_0^t{\rm e}^{-\frac q2(t-s)}\|v(\cdot,s)\|_{W^{2,q}(\Omega)}^q{\rm d}s\nonumber\\
&\le&\int_0^t{\rm e}^{-\frac q2(t-s)}\| w(\cdot,s)\|_{q\sigma}^{q\sigma}{\rm d}s+C_7\int_0^t{\rm e}^{-\frac q2(t-s)}\|u(\cdot,s)\|_{q\sigma'}^{q\sigma'}{\rm d}s\nonumber\\
&\le&\frac{1}{2C_2}\int_0^t{\rm e}^{-\frac q2(t-s)}\|\Delta w(\cdot,s)\|_\rho^q{\rm d}s+C_5\int_0^t{\rm e}^{-\frac q2(t-s)}{\rm d}s\nonumber\\
&&+C_7\int_0^t{\rm e}^{-\frac q2(t-s)}\|u(\cdot,s)\|_{q\sigma'}^{q\sigma'}{\rm d}s\nonumber\\
&\le&\frac{C_1}{2C_2}\int_0^t{\rm e}^{-\frac q2(t-s)}\|v(\cdot,s)\|_\rho^qds+\frac{C_1}{2C_2}\|w_0\|_{W^{2,\rho}(\Omega)}^q\\
&&+C_5\int_0^t{\rm e}^{-\frac q2(t-s)}{\rm d}s
+C_7\int_0^t{\rm e}^{-\frac q2(t-s)}\|u(\cdot,s)\|_{q\sigma'}^{q\sigma'}{\rm d}s\nonumber\\
&\le&\frac12\int_0^t{\rm e}^{-\frac q2(t-s)}\|v(\cdot,s)\|_{W^{2,q}(\Omega)}^q{\rm d}s+\frac{C_1}{2C_2}
\|w_0\|_{W^{2,\rho}(\Omega)}^q\nonumber\\
&&+C_5\int_0^t{\rm e}^{-\frac q2(t-s)}{\rm d}s+C_7\int_0^t{\rm e}^{-\frac q2(t-s)}\|u(\cdot,s)\|_{q\sigma'}^{q\sigma'}{\rm d}s\nonumber\\
&\le&\frac12\int_0^t{\rm e}^{-\frac q2(t-s)}\|v(\cdot,s)\|_{W^{2,q}(\Omega)}^q{\rm d}s+C_7\int_0^t{\rm e}^{-\frac q2(t-s)}\|u(\cdot,s)\|_{q\sigma'}^{q\sigma'}{\rm d}s\nonumber\\
&&+\frac{C_1}{2C_2}\|w_0\|_{W^{2,\rho}(\Omega)}^q+\frac{2C_5}q
\eess
for all $t\in(0,T_{\rm max})$. Thus
\bes
\int_0^t{\rm e}^{-\frac q2(t-s)}\|v(\cdot,s)\|_{W^{2,q}(\Omega)}^q{\rm d}s\le C_8\int_0^t{\rm e}^{-\frac q2(t-s)}\|u(\cdot,s)\|_{q\sigma'}^{q\sigma'}{\rm d}s+C_8\label{3.13}
\ees
for all $t\in(0,T_{\rm max})$, where
 \[C_8:=\max\left\{2C_7,\ \frac{C_1}{C_2}\|w_0\|_{W^{2,\rho}(\Omega)}^q+\frac{4C_5}q\right\}.\]

{\it Step 3: Testing the $u$-equation of \eqref{1.1}}.
Testing the first equation of \eqref{1.1} by $u^{p-1}$ and using Young's inequality one can find $C_9>0$ such that
 \bes
 &&\frac{\rm d}{{\rm d}t}\int_\Omega u^p{\rm d}x+\dd\frac q2\int_\Omega u^p{\rm d}x\nonumber\\[1mm]
&=&-p(p-1)\int_\Omega u^{p-2}D(u)|\nabla u|^2{\rm d}x+\chi p(p-1)\int_\Omega u^{p-1}\nabla u\cdot\nabla v{\rm d}x\nonumber\\[1mm]
&&-p\int_\Omega u^p{\rm d}x-p\int_\Omega u^pw{\rm d}x+p\int_\Omega \varphi u^{p-1}{\rm d}x+\frac q2\int_\Omega u^p{\rm d}x\nonumber\\[1mm]
&=&-p(p-1)\int_\Omega u^{p-2}D(u)|\nabla u|^2{\rm d}x-\chi (p-1)\int_\Omega u^p\Delta v{\rm d}x\nonumber\\[1mm]
&&-p\int_\Omega u^p{\rm d}x-p\int_\Omega u^pw{\rm d}x+p\int_\Omega \varphi u^{p-1}{\rm d}x+\frac q2\int_\Omega u^p{\rm d}x\nonumber\\[1mm]
&\le& -k_Dp(p-1)\int_\Omega u^{p+m-3}|\nabla u|^2{\rm d}x
+\chi(p-1)\int_\Omega u^p|\Delta v|{\rm d}x\nonumber\\[1mm]
&&+p\varphi_*\int_\Omega  u^{p-1}{\rm d}x+\dd\frac q2\int_\Omega u^p{\rm d}x\nonumber\\[1mm]
&\le& -k_Dp(p-1)\int_\Omega u^{p+m-3}|\nabla u|^2{\rm d}x+C_9\int_\Omega u^{pq'}{\rm d}x+\int_\Omega|\Delta v|^q{\rm d}x\nonumber\\[1mm]
&&+p\varphi_*\int_\Omega u^{p-1}{\rm d}x+\dd\frac q2\int_\Omega u^p{\rm d}x,\;\; \forall\,t\in(0,T_{\rm max}),\label{3.15}
\ees
where $\varphi_*:=\|\varphi\|_{L^\infty(\Omega\times(0,\infty))}$. It is easy to derive from \eqref{3.15} that
 \bess
&&\int_\Omega u^p{\rm d}x+k_Dp(p-1)\int_0^t{\rm e}^{-\frac q2(t-s)}\int_\Omega u^{p+m-3}|\nabla u|^2{\rm d}x{\rm d}s\nonumber\\[1mm]
&\le& \int_\Omega u_0^p{\rm d}x+C_9\int_0^t{\rm e}^{-\frac q2(t-s)}\|u\|_{pq'}^{pq'}{\rm d}s+\int_0^t{\rm e}^{-\frac q2(t-s)}\|\Delta v\|_q^q{\rm d}s\nonumber\\[1mm]
&&+p\varphi_*\int_0^t{\rm e}^{-\frac q2(t-s)}\|u\|_{p-1}^{p-1}{\rm d}s+\dd\frac q2\int_0^t{\rm e}^{-\frac q2(t-s)}\|u\|_p^p{\rm d}s,
 \;\; \forall\,t\in(0,T_{\rm max}),
  \eess
which combined with \eqref{3.13} yields that
\bes
&&\int_\Omega u^p{\rm d}x+k_Dp(p-1)\int_0^t{\rm e}^{-\frac q2(t-s)}\int_\Omega (1+u)^{p+m-3}|\nabla u|^2{\rm d}x{\rm d}s\nonumber\\[1mm]
&\le& C_9\int_0^t{\rm e}^{-\frac q2(t-s)}\|u\|_{pq'}^{pq'}{\rm d}s+C_8\int_0^t{\rm e}^{-\frac q2(t-s)}\|u\|_{q\sigma'}^{q\sigma'}{\rm d}s\nonumber\\[1mm]
&&+p\varphi_*\int_0^t{\rm e}^{-\frac q2(t-s)}\|u\|_{p-1}^{p-1}{\rm d}s+\dd\frac q2\int_0^t{\rm e}^{-\frac q2(t-s)}\|u\|_p^p{\rm d}s+\int_\Omega u_0^p{\rm d}x+C_8\qquad\label{3.16}
\ees
for all $t\in(0,T_{\rm max})$. From $m>1-\frac2n$, \eqref{3.5} and the right hand side of \eqref{3.6}, we know that
\bess
{\max\big\{pq',\,q\sigma',\,p-1,\,p\big\}}&<&p+m+\frac2n-1\\[1mm]
&=&\min\left\{\frac{n(p+m-1)}{n-2},\; p+m+\frac2n-1\right\},
\eess
and hence we employ Lemma \ref{l2.6} to obtain $C_{10}>0$ such that for all $t\in(0,T_{\rm max})$,
 \bes
&& C_9\|u\|_{pq'}^{pq'}+C_8\|u\|_{q\sigma'}^{q\sigma'}
+p\varphi_*\|u\|_{p-1}^{p-1}
+\frac q2\|u\|_p^p\nonumber\\
 &\le& \frac{k_Dp(p-1)}{2}\int_\Omega u^{p+m-3}|\nabla u|^2+C_{10}.\label{3.14}
\ees
Inserting this into \eqref{3.16} yields that for all $t\in(0,T_{\rm max})$,
\bes
 &&\int_\Omega u^p{\rm d}x+\frac{k_Dp(p-1)}{2}\int_0^t{\rm e}^{-\frac q2(t-s)}\int_\Omega u^{p+m-3}|\nabla u|^2{\rm d}x{\rm d}s\nonumber\\[1mm]
 &\le& \int_\Omega u_0^p{\rm d}x+C_8+\frac{2C_{10}}q,\label{3.17}
\ees
which provides the uniform-in-time $L^p$-boundedness of $u$. {Moreover, we have from \eqref{3.13}, \eqref{3.14} and \eqref{3.17} that
\bes
\int_0^t{\rm e}^{-\frac q2(t-s)}\|v(\cdot,s)\|_{W^{2,q}(\Omega)}^q{\rm d}s\le C_{11},\;\; \forall\, t\in(0,T_{\rm max})\label{3.18}
\ees
for some $C_{11}>0$.} Noting that
  \[\frac{p+m+2/n-1}{m+2/n-1}<q<q_*=\frac{4}{n+4}\left(p+m+\frac 2n-1\right),\]
we obtain the weighted time-space estimate for $v$ from \eqref{3.18}.
\end{proof}

From the above proof we see that the results of Lemma \ref{l3.3} still hold for $n=2$ and the corresponding condition for $m$ is $m>2$. However, this restriction $m>2$ is too strong. We shall employ a different method to get another estimate for the two dimensional case. Unfortunately, this method is only applicable for $n=2$.

\begin{lemma}\label{l3.4a} Let $\kappa=0$, $n=2$, $m>\frac32$ and $D(h)\ge k_Dh^{m-1}$ for all $h\ge0$ and some constant $k_D>0$. Then, for any
  \bes
  p>\max\left\{1,\;\frac{m}{2m-3}-m+1,\;m-2\right\},
  \label{x.1}\ees
there exist $q>\frac{p+m}{m-1}$ and $C=C(p,q)>0$ such that
 \bess
\|u(\cdot,t)\|_p+\|w(\cdot,t)\|_q\le C,\;\;\; \forall\ t\in(0,T_{\rm max}).
\eess
\end{lemma}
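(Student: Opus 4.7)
\textbf{Plan for Lemma \ref{l3.4a}.} The strategy is a coupled $L^p$--$L^q$ estimate for $u$ and $w$ via simultaneous testing, elliptic regularity for $v$, and a two-dimensional Gagliardo--Nirenberg interpolation. Throughout I freely use the $L^1$-bounds \eqref{2.1y} and \eqref{2.2y} from Lemma \ref{l2.4z}.

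I first test the $u$-equation of \eqref{1.1} against $u^{p-1}$. Integration by parts together with $D(h)\ge k_Dh^{m-1}$ generates the dissipation $k_D(p-1)\int_\Omega u^{p+m-3}|\nabla u|^2=\frac{4k_D(p-1)}{(p+m-1)^2}\|\nabla u^{(p+m-1)/2}\|_2^2$, while the chemotactic flux becomes $-\frac{\chi(p-1)}{p}\int_\Omega u^p\Delta v$, which, by the identity $\Delta v=v-uw$ (the second equation of \eqref{1.1} with $\kappa=0$), equals $-\frac{\chi(p-1)}{p}\int_\Omega u^pv+\frac{\chi(p-1)}{p}\int_\Omega u^{p+1}w$. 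The first summand is nonpositive and discarded, as is $-\int_\Omega u^pw$, while $\int_\Omega\varphi u^{p-1}$ is absorbed by Young's inequality. Testing the $w$-equation by $w^{q-1}$, invoking the classical $L^q$-elliptic estimate $\|v\|_q\le C\|uw\|_q$ on $-\Delta v+v=uw$, and splitting $\int vw^{q-1}$ by Young yield
\bess
\frac{d}{dt}\Big(\frac{1}{p}\|u\|_p^p+\frac{1}{q}\|w\|_q^q\Big)+c_1\|\nabla u^{(p+m-1)/2}\|_2^2+c_2\|\nabla w^{q/2}\|_2^2+c_3\big(\|u\|_p^p+\|w\|_q^q\big)\le C\!\int_\Omega\!u^{p+1}w+C\!\int_\Omega\!u^qw^q+C.
\eess

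The heart of the argument is to dominate the two crossed terms on the right by the two dissipations, a fraction of $\|u\|_p^p+\|w\|_q^q$, and a constant. For $\int_\Omega u^{p+1}w$ I use $\int u^{p+1}w\le\|u\|_r^{p+1}\|w\|_{r/(r-p-1)}$ with some $r>p+1$ and apply the two-dimensional Gagliardo--Nirenberg estimate
\bess
\|u\|_r\le C\|\nabla u^{(p+m-1)/2}\|_2^{\frac{2(r-1)}{r(p+m-1)}}\|u\|_1^{1/r}+C\|u\|_1,
\eess
selecting $r$ so that (i) $r(2-m)<p+1$, which keeps the power of $\|\nabla u^{(p+m-1)/2}\|_2$ strictly below $2$, and (ii) $r/(r-p-1)\le q$, which dominates $\|w\|_{r/(r-p-1)}$ by $\|w\|_q$ up to a domain constant. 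Young's inequality then converts the bound into $\ep\|\nabla u^{(p+m-1)/2}\|_2^2+C_\ep\|w\|_q^s$ for some $s$, and a further GN estimate on $w$ (again using $\|w\|_1\le C$) absorbs $\|w\|_q^s$ into $\ep\|\nabla w^{q/2}\|_2^2+C$ provided $s<q^2/(q-1)$. The second crossed term $\int u^qw^q$ is treated analogously via a Hölder split $\|u\|_{qa}^q\|w\|_{qa'}^q$ followed by the same two-step GN scheme. Bookkeeping of the Young exponents shows that the conditions \eqref{x.1} on $p$ together with $q>\frac{p+m}{m-1}$ are exactly what makes (i)--(ii) and the analogous inequalities for $\int u^qw^q$ simultaneously satisfiable with strict subcriticality in both dissipations. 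After absorption, the resulting differential inequality takes the form $y'(t)+\delta y(t)\le C$ with $y(t):=\frac{1}{p}\|u(\cdot,t)\|_p^p+\frac{1}{q}\|w(\cdot,t)\|_q^q$, and an elementary comparison yields the claimed uniform bound.

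\textbf{The main obstacle} is the joint calibration of the Hölder and Gagliardo--Nirenberg exponents: both crossed terms mix high powers of $u$ with $w$ and must be dominated by the \emph{same} pair of dissipations $\|\nabla u^{(p+m-1)/2}\|_2^2$ and $\|\nabla w^{q/2}\|_2^2$. This forces the arithmetic restrictions \eqref{x.1}, and it is precisely the breakdown of (i) as $m\searrow 3/2$---where $r(2-m)<p+1$ becomes incompatible with $r\ge q(p+1)/(q-1)$ for every admissible $q$---that pins down the threshold $m>\frac{3}{2}$ and the unresolved gap $1<m\le\frac{3}{2}$ noted in the paper.
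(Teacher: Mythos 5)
Your setup coincides with the paper's: the coupled functional $\frac1p\|u\|_p^p+\frac1q\|w\|_q^q$, the substitution $\Delta v=v-uw$ producing the single bad term $\frac{\chi(p-1)}{p}\int_\Omega u^{p+1}w$, and the testing of the $w$-equation by $w^{q-1}$ are exactly the paper's Step 2; your treatment of $\int_\Omega u^{p+1}w$ (Hölder/GN against the $u$-dissipation and $\|w\|_q$) is also in line with the paper, which uses $u^{p+1}w\le C u^{(p+1)q'}+\frac12w^q$ with $(p+1)q'<p+m$, i.e. $q>\frac{p+m}{m-1}$. The genuine gap is in your handling of $\int_\Omega vw^{q-1}$: estimating $\|v\|_q\le C\|uw\|_q$ and Young-splitting produces the crossed term $\int_\Omega u^qw^q$, and this term is supercritical with respect to everything your scheme makes available, namely $\|u\|_1,\|w\|_1\le C$ and the dissipations $\|\nabla u^{(p+m-1)/2}\|_2^2$, $\|\nabla w^{q/2}\|_2^2$. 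Any Hölder/GN/Young bookkeeping of $\int_\Omega u^\sigma w^\tau$ against these quantities requires the two gradient exponents to sum to less than $2$, which amounts to the existence of $x\in[0,1]$ with
\bess
\frac{\sigma-x}{p+m-1}+\frac{\tau-(1-x)}{q}<1 .
\eess
For $\sigma=\tau=q$ this reads $q(q-x)<(1-x)(p+m-1)$, which forces roughly $q^2<p+m-1$ (a plain pointwise Young split $u^qw^q\le u^{qc}+w^{qc'}$ likewise forces $q(q+1)<p+m$). This is incompatible with $q>\frac{p+m}{m-1}$ except when $m$ is enormous compared with $p$: e.g. for $m=2$, $p=5$ (admissible by \eqref{x.1}) one needs $q>7$ while $p+m-1=6$; and for $m$ near $\frac32$, where \eqref{x.1} forces $p$ large and $q>\frac{p+m}{m-1}\approx 2p$, the mismatch is even worse since $u$ is only controllable in $L^k$ for $k<p+m\approx p$. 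So the asserted ``bookkeeping'' cannot close, and the claimed ODI is not obtained.

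What the paper does differently at exactly this point is twofold, and both ingredients are needed. First, it never places $v$ in $L^q$: it splits $\int_\Omega vw^{q-1}\le C\int_\Omega v^\theta+\int_\Omega w^{(q-1)\theta'}$ with $\theta$ only slightly above $\frac{q+1}{2}$ (possible because $m>\frac32$ makes $\frac{q+1}{2}<\min\{q,p+m-1\}$ for its choice of $q$), so that $(q-1)\theta'<q+1$ keeps the pure $w$-power subcritical for the $w$-dissipation. Second, it exploits the full elliptic gain in two dimensions, $\|v\|_\theta\le C\|v\|_{W^{2,\rho}(\Omega)}\le C\|uw\|_\rho$ with $\rho>1$ close to $1$, and only then Young-splits $\|uw\|_\rho^\theta\le C\|u\|_{\rho\delta}^{\theta\delta}+C\|w\|_{\rho\delta'}^{\theta\delta'}$ with $\delta$ chosen as in \eqref{3c.22}; the resulting pure powers of $u$ and $w$ are subcritical and absorbed by GN together with the $L^1$ bounds. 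Note that merely upgrading your elliptic estimate to $\|v\|_q\le C\|uw\|_\rho$ would still not suffice for $m$ close to $\frac32$ (it would require roughly $q<p+m-1$); lowering the exponent on $v$ from $q$ to $\theta\approx\frac{q+1}{2}$ is just as essential as the regularity gain. As it stands, your argument needs to be repaired along these lines before the differential inequality $y'+\delta y\le C$ can be derived.
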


\begin{proof} {\it Step 1: Selecting parameters}. For the fixed $p$ satisfying \eqref{x.1}, due to $m>\frac32$ we know that
\bess
2<\frac{p+m}{m-1}<2(p+m-1)-1,
\eess
Then, for such $p$ we take
{ $$q:=\frac{\frac{p+m}{m-1}+2(p+m-1)-1}{2},\;\;\;q':=\frac{q}{q-1}.$$}
Clearly, $\frac{p+m}{m-1}<q<2(p+m-1)-1$ and
\bes
\frac{q+1}{2}<\min\{q,\ p+m-1\},\;\;\;(p+1)q'<p+m.\label{3c.18}
\ees
By the first inequality of \eqref{3c.18}, it is reasonable to choose $\theta>1$ fulfilling
\bes
\frac{q+1}{2}<\theta<\min\{q,\ p+m-1\}.\label{3.26b}
\ees
Letting $\theta':=\frac{\theta}{\theta-1}>1$, we have from \eqref{3.26b} and $2<\frac{p+m}{m-1}<q$ that
\bes
1<(q-1)\theta'<q+1.\label{3.27b}
\ees

Define
 $$f(z)=\left(z+\frac{p+m-1}{\theta}-1\right)\left(z+\frac q{\theta}-1\right)-1.$$
In view of the right hand side of \eqref{3.26b}, it is easy to see that
 \bess
f(1)=\frac{q(p+m-1)}{\theta^2}-1>0.
 \eess
Hence, we can take $\rho>1$ and close to $1$ such that $f\left(1/\rho\right)>0$, and
 \bes
 \frac{1}{\rho}<1+\frac{1}{\theta}.\label{3c.21}
\ees
Noting that $f\left(1/{\rho}\right)>0$ is equivalent to
\bess
\frac{1/\rho+q/{\theta}}{1/\rho+q/{\theta}-1}<\frac1\rho+\frac{p+m-1}{\theta},
\eess
and $\frac{1/\rho+q/{\theta}}{1/\rho+q/{\theta}-1}>1$. This enables us to fix $\delta>1$ such that
\bes
\frac{1/\rho+q/{\theta}}{1/\rho+q/{\theta}-1}<\delta<\frac1\rho+\frac{p+m-1}{\theta}.
 \label{3c.22}\ees
Setting $\delta':=\frac{\delta}{\delta-1}>1$, the left hand side of \eqref{3c.22} asserts that
\bes
\delta'<\frac1\rho+\frac q{\theta}.\label{3.31b}
\ees

{\it Step 2: Estimating $\int_\Omega u^p{\rm d}x$ and $\int_\Omega w^q{\rm d}x$.}
{We use the first two equations of \eqref{1.1} and employ Young's inequality to find $C_1>0$ such that} for any $t\in(0,T_{\rm max})$,
\bes
&&\dd\frac1p\frac{\rm d}{{\rm d}t}\int_\Omega u^p{\rm d}x+\int_\Omega u^p{\rm d}x\nonumber\\[1mm]
&=&-(p-1)\int_\Omega u^{p-2}D(u)|\nabla u|^2{\rm d}x+\chi (p-1)\int_\Omega u^{p-1}\nabla u\cdot\nabla v{\rm d}x\nonumber\\[1mm]
&&-\int_\Omega u^pw{\rm d}x+\int_\Omega \varphi u^{p-1}{\rm d}x\nonumber\\[1mm]
&=&-(p-1)\int_\Omega u^{p-2}D(u)|\nabla u|^2{\rm d}x-\dd\frac{\chi (p-1)}{p}\int_\Omega u^p\Delta v{\rm d}x\nonumber\\[1mm]
&&
-\int_\Omega u^pw{\rm d}x+\int_\Omega \varphi u^{p-1}{\rm d}x\nonumber\\[1mm]
&\le& -(p-1)\int_\Omega u^{p-2}D(u)|\nabla u|^2{\rm d}x-\dd\frac{\chi (p-1)}{p}\int_\Omega u^pv{\rm d}x\nonumber\\[1mm]
 &&+\dd\frac{\chi (p-1)}{p}\int_\Omega u^{p+1}w{\rm d}x-\int_\Omega u^pw{\rm d}x+\int_\Omega \varphi u^{p-1}{\rm d}x\nonumber\\[1mm]
&\le& -k_D(p-1)\int_\Omega u^{p+m-3}|\nabla u|^2{\rm d}x+\dd\frac{\chi (p-1)}{p}\int_\Omega u^{p+1}w{\rm d}x+\varphi_*\int_\Omega  u^{p-1}{\rm d}x\nonumber\\[1mm]
&\le& -k_D(p-1)\int_\Omega u^{p+m-3}|\nabla u|^2{\rm d}x+C_1\int_\Omega u^{(p+1)q'}{\rm d}x\nonumber\\[1mm]
&&+\dd\frac12\int_\Omega w^q{\rm d}x+\varphi_*\int_\Omega  u^{p-1}{\rm d}x.
 \label{3c.24}\ees

Testing the third equation of \eqref{1.1} by $w^{q-1}$ and using Young's inequality we have
\bes
&&\dd\frac1q\frac{\rm d}{{\rm d}t}\int_\Omega w^q{\rm d}x+\int_\Omega w^q{\rm d}x+(q-1)\int_\Omega w^{q-2}|\nabla w|^2{\rm d}x\nonumber\\[1mm]
&\le& \int_\Omega vw^{q-1}{\rm d}x\le C_2\int_\Omega v^\theta{\rm d}x+\int_\Omega w^{(q-1)\theta'}{\rm d}x\qquad\label{3c.25}
\ees
for all $t\in(0,T_{\rm max})$ and some constant $C_2>0$. {Due to \eqref{3c.21}, by the Sobolev embedding theorem and} the known $L^p$-theory of elliptic equations as well as Young's inequality, one can find $C_3,C_4,C_5>0$ fulfilling
\bess
C_2\|v\|_\theta^\theta\le C_3\|v\|_{W^{2,\rho}(\Omega)}^\theta\le C_4\|uw\|_\rho^\theta
\le C_5\|u\|_{\rho\delta}^{\theta\delta}+C_5\|w\|_{\rho\delta'}^{\theta\delta'}\;,\;\; \forall\,t\in(0,T_{\rm max}).
\eess
{Combining this with \eqref{3c.24} and \eqref{3c.25}, we obtain}
 \bes
&&\frac{\rm d}{{\rm d}t}\int_\Omega \left(\dd\frac{u^p}p+\frac{w^q}q\right){\rm d}x+\int_\Omega\left(u^p+\frac12 w^q\right){\rm d}x\nonumber\\[1mm]
&&+\int_\Omega\left(k_D(p-1)u^{p+m-3}|\nabla u|^2+(q-1)w^{q-2}|\nabla w|^2\right){\rm d}x\nonumber\\[1mm]
&\le& C_1\int_\Omega u^{(p+1)q'}{\rm d}x+\varphi_*\!\int_\Omega u^{p-1}{\rm d}x+C_5\|u\|_{\rho\delta}^{\theta\delta}
+C_5\|w\|_{\rho\delta'}^{\theta\delta'}+\!\int_\Omega w^{(q-1)\theta'}{\rm d}x\qquad\quad\label{3c.26}
\ees
for all $t\in(0,T_{\rm max})$. According to the second inequality of \eqref{3c.18} and $p-1<p+m$, we apply the two-dimensional version of Lemma \ref{l2.6} to get $C_6>0$ such that
\bes
C_1\int_\Omega u^{(p+1)q'}{\rm d}x+\varphi_*\int_\Omega  u^{p-1}{\rm d}x\le \frac{k_D(p-1)}{2}\int_\Omega u^{p+m-3}|\nabla u|^2{\rm d}x+C_6\label{3c.27}
\ees
for all $t\in(0,T_{\rm max})$. Take $\alpha:=1-\frac1{\rho\delta}\in(0,1)$. It follows from the right hand side of \eqref{3c.22} that
\bess
\frac{2\theta\delta\alpha}{p+m-1}\in(0,2).
\eess
Thus, using the Gagliardo-Nirenberg inequality, \eqref{2.1y}, and Young's inequality in sequence, one can find $C_7,C_8,C_9>0$ such that
\bes
C_5\big\|u\big\|_{\rho\delta}^{\theta\delta}&=&C_5\big\|u^{\frac{p+m-1}{2}}
\big\|_{\frac{2\rho\delta}{p+m-1}}^{\frac{2\theta\delta}{p+m-1}}\nonumber\\[1mm]
&\le& C_7\big\|\nabla u^{\frac{p+m-1}{2}}\big\|_2^{\frac{2\theta\delta\alpha}{p+m-1}}
\big\|u^{\frac{p+m-1}{2}}\big\|_{\frac2{p+m-1}}^{\frac{2\theta\delta(1-\alpha)}{p+m-1}}
+C_7\big\|u^{\frac{p+m-1}{2}}\big\|_{\frac2{p+m-1}}^{\frac{2\theta\delta}{p+m-1}}\nonumber\\[1mm]
&\le&C_8\big\|\nabla u^{\frac{p+m-1}{2}}\big\|_2^{\frac{2\theta\delta\alpha}{p+m-1}}+C_8\nonumber\\[1mm]
&\le&\dd\frac{2k_D(p-1)}{(p+m-1)^2}\big\|\nabla u^{\frac{p+m-1}{2}}\big\|_2^2+C_9\nonumber\\[1mm]
&=&\dd\frac{k_D(p-1)}{2}\int_\Omega u^{p+m-3}|\nabla u|^2{\rm d}x+C_9,\;\; \forall\,t\in(0,T_{\rm max}).\label{3c.28}
 \ees
Take $\beta:=1-\frac1{\rho\delta'}\in(0,1)$. It then follows from \eqref{3.31b} that $\frac{2\theta\delta'\beta}q\in(0,2)$. {By applying the Gagliardo-Nirenberg inequality, \eqref{2.2y} and Young's inequality in sequence, one can find $C_{10},C_{11},C_{12}>0$ such that}
\bes
C_5\|w\|_{\rho\delta'}^{\theta\delta'}&=&C_5\big\|w^{\frac q2}
\big\|_{\frac{2\rho\delta'}q}^{\frac{2\theta\delta'}q}\nonumber\\[.1mm]
&\le& C_{10}\big\|\nabla w^{\frac q2}\big\|_2^{\frac{2\theta\delta'\beta}q}
\big\|w^{\frac q2}\big\|_{\frac2q}^{\frac{2\theta\delta'(1-\beta)}q}
+C_{10}\big\|w^{\frac q2}\big\|_{\frac2q}^{\frac{2\theta\delta'}q}\nonumber\\[.1mm]
&\le&C_{11}\big\|\nabla w^{\frac q2}\big\|_2^{\frac{2\theta\delta'\beta}q}+C_{11}\nonumber\\[.1mm]
&\le&\dd\frac{2(q-1)}{q^2}\big\|\nabla w^{\frac q2}\big\|_2^2+C_{12}\nonumber\\[.1mm]
&=&\dd\frac{q-1}{2}\int_\Omega w^{q-2}|\nabla w|^2{\rm d}x+C_{12},\;\; \forall\,t\in(0,T_{\rm max}).\label{3c.29}
\ees
Similarly, we denote $\zeta:=1-\frac1{\theta'(q-1)}$, it follows from \eqref{3.27b} that
\bess
\zeta\in(0,1),\;\;\;{\rm and}\;\;\frac{2\zeta\theta'(q-1)}q\in(0,2),
\eess
which enables us to employ the Gagliardo-Nirenberg inequality and \eqref{2.2y} as well as Young's inequality to find $C_{13},C_{14},C_{15}>0$ such that
\bes
\int_\Omega w^{(q-1)\theta'}{\rm d}x&=&\big\|w^{\frac q2}\big\|_{\frac{2(q-1)\theta'}q}^{\frac{2(q-1)\theta'}q}\nonumber\\[1mm]
&\le& C_{13}\big\|\nabla w^{\frac q2}\big\|_2^{\frac{2\zeta\theta'(q-1)}q}\big\|w^{\frac q2}\big\|_{\frac2q}^{\frac{2\theta'(q-1)(1-\zeta)}q}
+C_{13}\big\|w^{\frac q2}\big\|_{\frac2q}^{\frac{2(q-1)\theta'}q}\nonumber\\[1mm]
&\le&C_{14}\big\|\nabla w^{\frac q2}\big\|_2^{\frac{2\zeta\theta'(q-1)}q}+C_{14}\nonumber\\[1mm]
&\le&\dd\frac{2(q-1)}{q^2}\big\|\nabla w^{\frac q2}\big\|_2^2+C_{15}\nonumber\\[1mm]
&=&\dd\frac{q-1}{2}\int_\Omega w^{q-2}|\nabla w|^2{\rm d}x+C_{15},\;\; \forall\,t\in(0,T_{\rm max}).\label{3c.30}
\ees
Inserting \eqref{3c.27}-\eqref{3c.30} into \eqref{3c.26} yields that
\bess
\frac{\rm d}{{\rm d}t}\left(\frac1p\int_\Omega u^p{\rm d}x+\frac1q\int_\Omega w^q{\rm d}x\right)+\int_\Omega u^p{\rm d}x+\frac12\int_\Omega w^q{\rm d}x\le C_6+C_9+C_{12}+C_{15}
\eess
for all $t\in(0,T_{\rm max})$, and hence an ODE comparison principle implies the existence of $C_{16}>0$ such that
\bess
\int_\Omega u^p{\rm d}x+\int_\Omega w^q{\rm d}x\le C_{16},\;\; \forall\,t\in(0,T_{\rm max}).
\eess
Since $q>\frac{p+m}{m-1}$, the desired conclusion follows.
\end{proof}

The above two lemmata allow us to apply a standard iterative argument of Moser type to obtain the uniform-in-time $L^\infty$-boundedness of the solution.

\begin{lemma}\label{l3.4}
Let $\kappa=0$, $n\ge2$, $m>2+\frac n2-\frac 2n$ when $n\ge3$ and $m>\frac32$ when $n=2$. Suppose that $D(h)\ge k_Dh^{m-1}$ for all $h\ge0$ and some constant $k_D>0$. Then there exists $C>0$ such that
\bess
\|u(\cdot,t)\|_\infty+\|v(\cdot,t)\|_{W^{1,\infty}(\Omega)}+\|w(\cdot,t)\|_\infty\le C,\;\; \forall\,t\in(0,T_{\rm max}).
\eess
\end{lemma}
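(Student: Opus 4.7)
My plan is to combine the high-integrability estimates from Lemmas \ref{l3.3} and \ref{l3.4a} with a bootstrap argument for $w$ and $v$, followed by a standard Moser--Alikakos iteration on the quasilinear $u$-equation. Since Lemma \ref{l3.3} applies with any $p>1$, and Lemma \ref{l3.4a} permits $p,q$ to be arbitrarily large, I first record that $\|u(\cdot,t)\|_p\le C(p)$ is available for every $p>1$ on $(0,T_{\rm max})$, together with $\|w(\cdot,t)\|_q\le C(q)$ when $n=2$, and the weighted space-time bound $\int_0^t{\rm e}^{-\frac q2(t-s)}\|v(\cdot,s)\|_{W^{2,q}(\Omega)}^q\,{\rm d}s\le C$ with $q$ as large as needed when $n\ge3$.

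The second step is to upgrade these to $\|w(\cdot,t)\|_\infty\le C$ and $\|v(\cdot,t)\|_{W^{1,\infty}(\Omega)}\le C$. For $n=2$, the $L^q$-bound on $w$ from Lemma \ref{l3.4a} combines via H\"older's inequality with $\|u\|_p$ to yield $\|uw\|_s\le C$ for $s$ large; elliptic $L^s$-theory applied to $-\Delta v+v=uw$ gives $\|v\|_{L^\infty}\le C$, and then Lemma \ref{l2.3a} applied to $w_t=\Delta w-w+v$ delivers $\|w\|_\infty\le C$. For $n\ge3$, the same conclusion follows by combining the weighted estimate on $v$ with the Duhamel representation $w(t)={\rm e}^{t(\Delta-1)}w_0+\int_0^t {\rm e}^{(t-s)(\Delta-1)}v(s)\,{\rm d}s$: the $L^q$--$L^\infty$ smoothing estimate of the Neumann heat semigroup, combined with H\"older's inequality in time, converts the weighted $W^{2,q}$-bound on $v$ into a uniform $L^\infty$-bound on $w$ provided $q>1+\frac{n}{2}$, which is ensured by choosing $p$ large in Lemma \ref{l3.3}. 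Once $\|w\|_\infty\le C$ is in hand, $\|uw\|_p\le C$ for every $p$, so elliptic regularity yields $\|v\|_{W^{2,p}}\le C$ and, by Sobolev embedding with $p>n$, $\|v(\cdot,t)\|_{W^{1,\infty}(\Omega)}\le C$.

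Finally, with $\nabla v\in L^\infty$ under control, a standard Moser--Alikakos iteration on the quasilinear $u$-equation produces the remaining $L^\infty$-bound. Testing the first equation of \eqref{1.1} against $u^{p-1}$ for $p=p_k\to\infty$, integrating by parts in the chemotactic term to produce $-\frac{\chi(p-1)}{p}\int_\Omega u^p\Delta v$, and exploiting $D(u)\ge k_Du^{m-1}$ together with the Gagliardo--Nirenberg inequality yields a recursive estimate on $\|u\|_{p_k}$ whose limit delivers $\|u(\cdot,t)\|_\infty\le C$. The main obstacle throughout is the bootstrap for $n\ge3$, where the weighted space-time nature of the $v$-estimate forces a careful time-integrability analysis of the heat-semigroup convolution; this is made possible precisely because Lemma \ref{l3.3} provides $q$ as large as one needs.
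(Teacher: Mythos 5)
Your proposal is correct and follows the same overall skeleton as the paper (invoke Lemma \ref{l3.3} for $n\ge3$ and Lemma \ref{l3.4a} for $n=2$, bootstrap to $\|w\|_\infty$ and $\|v\|_{W^{1,\infty}}$, then run a Moser-type iteration on the $u$-equation), but your bootstrap for $w$ in the case $n\ge3$ is a genuinely different device. The paper tests the $w$-equation with $w^{q-1}$ and uses the weighted bound on $\|v\|_q^q$ plus an ODE comparison to get $\|w(\cdot,t)\|_q\le C$ with $q>2n$, then combines with $\|u\|_p$, elliptic regularity and Sobolev embedding to get $\|v\|_{W^{1,\infty}}\le C$, and only afterwards deduces $\|w\|_\infty$ by comparison in the $w$-equation. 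You instead pass directly from the weighted space-time bound to $\|w\|_\infty$ via the Duhamel representation, the $L^q$--$L^\infty$ smoothing of the Neumann heat semigroup and H\"older in time; your integrability condition $q>1+\frac n2$ is exactly what makes the singular factor $(t-s)^{-n/(2q)}$ lie in $L^{q'}$ against the exponential weight, and it is indeed available by taking $p$ large in Lemma \ref{l3.3}, so this shortcut is sound and slightly leaner (it skips the intermediate $L^q$-estimate of $w$ and one application of elliptic regularity), at the price of invoking semigroup smoothing rather than the elementary testing argument. Your $n=2$ treatment and the final ordering ($\|w\|_\infty$ first, then $v\in W^{1,\infty}$) are fine and equivalent to the paper's.

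One small caution in the last step: after the bootstrap you have $\nabla v$, $v$, $w$ bounded in $L^\infty$, but \emph{not} $\Delta v$, so the phrase about integrating by parts to produce $-\frac{\chi(p-1)}{p}\int_\Omega u^p\Delta v$ should not be read as an estimate of $\Delta v$ in $L^\infty$. Either keep the chemotactic term in the form $\chi(p-1)\int_\Omega u^{p-1}\nabla u\cdot\nabla v$ and absorb it into the diffusion term via $|\nabla v|\le C$, Young's inequality and $D(u)\ge k_Du^{m-1}$, or, if you do integrate by parts, substitute $\Delta v=v-uw$ from the elliptic equation and use the $L^\infty$-bounds on $v$ and $w$; with either variant the standard Moser--Alikakos recursion (as in the lemma the paper cites) closes and yields $\|u(\cdot,t)\|_\infty\le C$.
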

\begin{proof} {\it Case I: $n\ge3$}. For the fixed
 $$p>\max\left\{2n,\; (2n-1)\left(m+2/n-1\right)\right\},$$
by Lemma \ref{l3.3}, there exist $q>\frac{p+m+2/n-1}{m+2/n-1}$ and $C_1>0$ such that
\bes
\|u(\cdot,t)\|_p\le C_1\;,\;\; \forall\, t\in(0,T_{\rm max})\label{3.19a}
\ees
and
\bes
\int_0^t {\rm e}^{-\frac q2(t-s)}\|v(\cdot,s)\|_q^q{\rm d}s\le C_1\;,\;\; \forall\, t\in(0,T_{\rm max}).\label{3.19}
\ees
Testing the third equation of \eqref{1.1} by $w^{q-1}$ and using Young's inequality one can find $C_2>0$ fulfilling
 \bess
\frac{\rm d}{{\rm d}t}\int_\Omega w^q{\rm d}x+\frac q2\int_\Omega w^q{\rm d}x&=&-q(q-1)\int_\Omega w^{q-2}|\nabla w|^2{\rm d}x-\frac q2\int_\Omega w^q{\rm d}x+p\int_\Omega vw^{q-1}{\rm d}x\nonumber\\[1mm]
&\le&C_2\int_\Omega v^q{\rm d}x,\;\; \forall\,t\in(0,T_{\rm max}),
\eess
which further implies that, for any $t\in(0,T_{\rm max})$,
\bess
\int_\Omega w^q{\rm d}x\le\int_\Omega w_0^q{\rm d}x+C_2\int_0^t{\rm e}^{-\frac q2(t-s)}\|v(\cdot,s)\|_q^q{\rm d}s.
\eess
This combined with \eqref{3.19} implies that
\bes
\|w(\cdot,t)\|_q\le C_3\;,\;\; \forall\,t\in(0,T_{\rm max})\label{3.20}
\ees
with some $C_3>0$. Noticing that
 $$q>\frac{p+m+2/n-1}{m+2/n-1}>2n,\;\;\;\mbox{and}\;\; p>2n.$$
Due to \eqref{3.20} and \eqref{3.19a}, for some $k>n$ there exists $C_4>0$ such that
\bess
\|(uw)(\cdot,t)\|_k\le C_4,\;\; \forall\,t\in(0,T_{\rm max}).
\eess
Recalling the second equation of \eqref{1.1}, in light of the classical $L^p$ theory of elliptic equations, there exists $C_5>0$ such that
\bess
\|v(\cdot,t)\|_{W^{2,k}(\Omega)}\le C_5,\;\; \forall\,t\in(0,T_{\rm max}),
\eess
and hence by the Sobolev embedding theory one can find $C_6>0$ fulfilling
\bes
\|v(\cdot,t)\|_{W^{1,\infty}(\Omega)}\le C_6,\;\; \forall\,t\in(0,T_{\rm max}).\label{3.21}
\ees
On basis of \eqref{3.21} and the third equation of \eqref{1.1}, we use the comparison principle to derive the uniform-in-time $L^\infty$-boundedness of $w$. Then, with the uniform-in-time $L^\infty$-boundedness of $\nabla v$ and $w$, we can perform a Moser-type iterative argument (e.g. Lemma A.1 of Ref. \refcite{Ke70}) to deduce the uniform-in-time $L^\infty$-boundedness of $u$.

{\it Case II: $n=2$}. By Lemma \ref{l3.4a}, for the fixed
 $$r>\max\left\{4,\ 3m-4,\; \frac{m}{2m-3}-m+1\right\},$$
there exist $l>\frac{r+m}{m-1}$ and $C_7>0$ such that
\bess
\|u(\cdot,t)\|_{r}+\|w(\cdot,t)\|_{l}\le C_7,\;\; \forall\,t\in(0,T_{\rm max}).
\eess
Due to $r>4$ and $l>\frac{r+m}{m-1}>4$, there exist $\theta>2$ and $C_8>0$ such that
\bess
\|(uw)(\cdot,t)\|_\theta\le C_8,\;\; \forall\,t\in(0,T_{\rm max}).
\eess
Then by the same argument as above one can prove the $W^{1,\infty}$-boundedness of $v$ and then $L^\infty$-boundedness of $w$ and $u$. The proof is completed.
\end{proof}

\noindent{\bf Proof of Theorem \ref{t1.1}.}
The proof follows from the combination of Lemma \ref{l2.1} and Lemma \ref{l3.4}.
\hfill{\fontsize{8.7pt}{8.7pt}$\Box$}

\subsection{Blow-up phenomenon}

In this subsection, we always assume that $\Omega:=B_R(0)$ and the initial data satisfy \eqref{1.3z} and \eqref{1.4z}. We shall employ the ideas from Refs. \refcite{TW-S21} and \refcite{Wang-W23} to prove Theorem \ref{t1.2}.
The following lemma is originated from Ref. \refcite{TW-S21} which enables us to tackle the nonlinear term $uw$. The restriction $n\in\{2,3\}$ is to ensure $\frac n2<\frac{n}{n-2}$.

{\begin{lemma}\label{l4.1} Let $\kappa=0$, $n\in\{2,3\}$ and $p\in\big(\frac n2,\frac{n}{n-2}\big)$, and let $\mu>0$ and $\beta\ge\alpha>0$. Then there exist $T_*=T_*(\mu,\alpha,\beta,p)>0$ and $K=K(\mu,\alpha,\beta,p)>0$ such that if $u_0$ and $w_0$ satisfy \eqref{1.3z}, \eqref{1.4z} and \eqref{1.5x} with $\mu, \alpha$ and $\beta$, then
\bes
&\dd\frac{\mu}{2}\le\int_\Omega u(x,t){\rm d}x\le 2\mu,\;\; \forall\,t\in(0,\,\min\{T_*,\, T_{\rm max}\}),&\label{3c.36}\\[1mm]
&w(x,t)\ge \dd\frac{\alpha}{2},\;\;\|w(\cdot,t)\|_\infty\le 2\beta,\;\; \forall\,x\in\Omega,\;\,t\in(0,\,\min\{T_*,\, T_{\rm max}\}),&\label{3c.37}\\[1mm]
&\|v(\cdot,t)\|_p\le K,\;\; \forall\,t\in(0,\,\min\{T_*,\, T_{\rm max}\}).&\label{3c.38}
\ees
\end{lemma}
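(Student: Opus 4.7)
The plan is to establish all three estimates simultaneously through a continuity (bootstrap) argument that exploits the coupling: the bound on $\|v\|_p$ follows from $\int_\Omega u$ and $\|w\|_\infty$ via an $L^1$-$L^p$ elliptic estimate, while the bound on $\|w\|_\infty$ follows from $\|v\|_p$ via Neumann heat-semigroup smoothing. Fix $p \in (n/2,\,n/(n-2))$ and let $C_p$ denote the norm of $(-\Delta+I)^{-1}$ (with homogeneous Neumann data) as a map $L^1(\Omega)\to L^p(\Omega)$; the constraint $p<n/(n-2)$ is precisely what makes $C_p$ finite, by a standard Brezis--Strauss-type argument combined with Sobolev embedding. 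Set $K:=8C_p\mu\beta+1$ and define
\[
T_0 := \sup\Bigl\{T\in(0,T_{\max})\,:\ \tfrac{\mu}{2}\le\textstyle\int_\Omega u(\cdot,t)\,dx\le 2\mu,\ \tfrac{\alpha}{2}\le w\le 2\beta,\ \|v(\cdot,t)\|_p\le K\ \text{on}\ (0,T)\Bigr\}.
\]
At $t=0$, \eqref{1.5x} and the elliptic identity $-\Delta v(\cdot,0)+v(\cdot,0)=u_0w_0$ give $\|v(\cdot,0)\|_p\le C_p\mu\beta<K$, so each of the three bounds holds strictly and continuity of $(u,v,w)$ yields $T_0>0$. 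The objective is to produce an explicit $T_*=T_*(\mu,\alpha,\beta,p)>0$ on which all three bounds are forced to be strictly inside their defining ranges, thereby preventing $T_0<T_*$.

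For the mass of $u$, I would integrate the first equation of \eqref{1.1} over $\Omega$ and insert $0\le w\le 2\beta$ to obtain, on $(0,T_0)$,
\[
-(1+2\beta)\int_\Omega u\,dx \,\le\, \frac{d}{dt}\int_\Omega u\,dx \,\le\, -\int_\Omega u\,dx + \varphi_*|\Omega|,
\]
whence $\mu e^{-(1+2\beta)t}\le\int_\Omega u(\cdot,t)\,dx\le\mu+\varphi_*|\Omega|\,t$; both sides lie strictly between $\mu/2$ and $2\mu$ on an explicit interval $(0,T_1)$ with $T_1=T_1(\mu,\beta,\varphi_*)$. Combining $\int_\Omega u\le 2\mu$ with $\|w\|_\infty\le 2\beta$ yields $\|u(\cdot,t)w(\cdot,t)\|_1\le 4\mu\beta$ throughout $(0,T_0)$, and the elliptic estimate applied to $-\Delta v+v=uw$ immediately gives $\|v(\cdot,t)\|_p\le 4C_p\mu\beta=(K-1)/2<K$, so the $v$-bound is automatically strict.

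For $w$ I would invoke the Neumann heat-semigroup representation
\[
w(\cdot,t)=e^{-t}e^{t\Delta}w_0+\int_0^t e^{-(t-s)}e^{(t-s)\Delta}v(\cdot,s)\,ds.
\]
Positivity of $v$ together with the non-decreasing minimum property of the Neumann heat semigroup gives $w(\cdot,t)\ge e^{-t}\min_{\bar\Omega}w_0\ge\alpha e^{-t}>\alpha/2$ for $t<\ln 2$. For the upper bound, $\|e^{-t}e^{t\Delta}w_0\|_\infty\le\|w_0\|_\infty\le\beta$, while the smoothing estimate $\|e^{\tau\Delta}f\|_\infty\le C(1+\tau^{-n/(2p)})\|f\|_p$ from Ref.~\refcite{Win-jde10} (which is integrable against $e^{-(t-s)}$ near $s=t$ precisely because $p>n/2$ forces $n/(2p)<1$), combined with $\|v(\cdot,s)\|_p\le K$ on $(0,T_0)$, yields $\|w(\cdot,t)\|_\infty\le\beta+C'K\,t^{1-n/(2p)}<2\beta$ for $t<T_2$, with $T_2=T_2(\mu,\alpha,\beta,p)$.

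Setting $T_*:=\min\{T_1,T_2,\ln 2\}$, the above shows that every one of the three inequalities defining $T_0$ is strict on $(0,\min\{T_*,T_0\})$. A standard continuity argument then rules out $T_0<T_*$, so \eqref{3c.36}--\eqref{3c.38} hold on $(0,\min\{T_*,T_{\max}\})$. The main obstacle is precisely the circular dependence between the $w$- and $v$-bounds; this is dissolved by the bootstrap scheme in which both are simultaneously assumed on $(0,T_0)$ and simultaneously improved, with the constraints $p>n/2$ and $p<n/(n-2)$ entering at exactly the right places to close the loop.
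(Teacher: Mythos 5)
Your proposal is correct, and in substance it coincides with the argument the paper relies on: the paper's own proof simply cites Sec.~3 of Ref.~\refcite{TW-S21} for the upper mass bound, \eqref{3c.37} and \eqref{3c.38}, and then adds only the lower bound in \eqref{3c.36} via $\frac{\rm d}{{\rm d}t}\int_\Omega u\,{\rm d}x=-\int_\Omega uw\,{\rm d}x-\int_\Omega u\,{\rm d}x+\int_\Omega\varphi\,{\rm d}x\ge-4\mu\beta-2\mu$ on the interval already furnished by that citation. What you have written is essentially a self-contained reconstruction of the cited argument: the mass ODE (using $0\le w\le2\beta$), the Brezis--Strauss/duality elliptic estimate $\|v\|_p\le C_p\|uw\|_1$ which is exactly where $p<\frac{n}{n-2}$ (i.e. $n\in\{2,3\}$) enters, and the Neumann heat-semigroup $L^p$--$L^\infty$ smoothing for $w$ which is exactly where $p>\frac n2$ enters, all closed by a continuity/bootstrap argument; you additionally fold the lower mass bound into the same bootstrap (via $\int_\Omega u\ge\mu e^{-(1+2\beta)t}$) rather than appending it afterwards as the paper does. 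The only cosmetic remarks are that your $T_1$ also depends on $\varphi_*$ and $|\Omega|$ (harmless, since these are fixed global data and, crucially, no constant depends on $\chi$, which is what the later blow-up argument needs) and that the lower bound $w\ge\alpha e^{-t}$ does not actually require the bootstrap hypotheses; neither affects correctness.
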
}

\begin{proof}
By following the arguments proceeded in Sec. 3 of Ref. \refcite{TW-S21}, one can find $T'=T'(\mu,\alpha,\beta,p)>0$ and $K=K(\mu,\alpha,\beta,p)>0$ such that the second inequality of \eqref{3c.36}, and \eqref{3c.37} and \eqref{3c.38} hold in $(0,\,\min\{T',\,T_{\rm max}\})$.

Making use of \eqref{3c.37}, it is easy to compute that for all $t\in(0,\min\{T',\ T_{\rm max}\})$,
\bess
\frac{\rm d}{{\rm d}t}\int_\Omega u{\rm d}x=-\int_\Omega uw{\rm d}x-\int_\Omega u{\rm d}x+\int_\Omega\varphi{\rm d}x\ge-4\mu\beta-2\mu,
\eess
which implies
\bess
\int_\Omega u(x,t){\rm d}x\ge \int_\Omega u_0{\rm d}x-2\mu(2\beta+1)t\ge \mu-2\mu(2\beta+1)t,\;\; \forall\, t\in(0,\,\min\{T',\, T_{\rm max}\}).
\eess
Let $T''=\min\big\{\frac{1}{4(2\beta+1)},\, T'\big\}$, then for all $t\in(0,\,\min\{T'',\, T_{\rm max}\})$ we find that first inequality of \eqref{3c.36} holds.
Taking $T_*=T''$, the desired conclusions are obtained.
\end{proof}

Since the initial data $(u_0,w_0)$ is radially symmetric, we know from Lemma \ref{l2.1} that $(u,v,w)$ is also radially symmetric. Therefore, in what follows we shall rewrite $(u,v,w)=(u,v,w)(r,t)$ for $r=|x|$. System \eqref{1.1} can be rewritten as
\bes
 \begin{cases}
 u_t=\dd\frac{1}{r^{n-1}}\left(r^{n-1}D(u)u_r\right)_r\\[2mm]
 \hspace{10mm}-\dd\frac{\chi}{r^{n-1}}
 \left(r^{n-1}uv_r\right)_r-u-uw+\varphi,&r\in(0,R),\;\;t\in(0,T_{\rm max}),\\[2mm]
  0=\dd\frac{1}{r^{n-1}}\left(r^{n-1}v_r\right)_r-v+uw,&r\in(0,R),\;\;t\in(0,T_{\rm max}),\\[2mm]
  w_t=\dd\frac{1}{r^{n-1}}\left(r^{n-1}w_r\right)_r-w+v,&r\in(0,R),\;\;t\in(0,T_{\rm max}),\\[1mm]
u_r=v_r=w_r=0,\;\;&r=0,\ R,\;\;t\in(0,T_{\rm max}),\\[0.1mm]
  u(r,0)=u_0,\ w(x,0)=w_0(x),\ &r\in[0,R).
 \end{cases}\qquad\label{4.8}
 \ees
Multiplying the first equation of \eqref{4.8} by $r^{n-1}$ and integrating the obtained result over $(0,r)$ yields that
 \bess
\frac{\rm d}{{\rm d}t}\int_0^r \rho^{n-1}u{\rm d}\rho
&=&\int_0^r\left(\rho^{n-1}D(u)u_\rho\right)_\rho {\rm d}\rho-\chi \int_0^r\left(\rho^{n-1}uv_\rho\right)_\rho {\rm d}\rho\nonumber\\[1mm]
&&-\int_0^r \rho^{n-1}u{\rm d}\rho-\int_0^r \rho^{n-1}uw{\rm d}\rho+\int_0^r \rho^{n-1}\varphi {\rm d}\rho\nonumber\\[1mm]
&=&r^{n-1}D(u)u_r-\chi r^{n-1}uv_r-\int_0^r \rho^{n-1}u{\rm d}\rho\nonumber\\[1mm]
&&-\int_0^r \rho^{n-1}uw{\rm d}\rho+\int_0^r \rho^{n-1}\varphi {\rm d}\rho.
 \eess
Since it follows from the second equation in \eqref{4.8} that $r^{n-1}v_r=\int_0^r \rho^{n-1}v{\rm d}\rho-\int_0^r \rho^{n-1}uw{\rm d}\rho$, we have
\bes
\frac{\rm d}{{\rm d}t}\int_0^r \rho^{n-1}u{\rm d}\rho&=&r^{n-1}D(u)u_r-\chi u\int_0^r \rho^{n-1}v{\rm d}\rho+\chi u\int_0^r \rho^{n-1}uw{\rm d}\rho\nonumber\\[1mm]
&&-\int_0^r \rho^{n-1}u{\rm d}\rho-\int_0^r \rho^{n-1}uw{\rm d}\rho+\int_0^r \rho^{n-1}\varphi {\rm d}\rho.\label{4.9}
\ees
We set from now on that
\bes
z(s,t):=n\int_0^{s^{\frac1n}}\rho^{n-1}u(\rho,t){\rm d}\rho,\;\;\; (s,t)\in\left[0,R^n\right]\times[0,T_{\rm max}).\label{4.10}
\ees
By letting $r=s^{\frac1n}$, it is easy to see that
\bess
u(r,t)=z_s(s,t),\;\;\ u_r(r,t)=ns^{\frac{n-1}{n}}z_{ss}(s,t),\;\; \forall\, (s,t)\in\left(0,R^n\right)\times[0,T_{\rm max}),
\eess
and hence we have from \eqref{4.9} that for all $(s,t)\in\left(0,R^n\right)\times(0,T_{\rm max})$,
\bes
z_t(s,t)&=&n^2s^{\frac{2n-2}{n}}D(z_s)z_{ss}-\chi nz_s\int_0^{s^{\frac1n}} \rho^{n-1}v{\rm d}\rho+\chi nz_s\int_0^{s^{\frac1n}} \rho^{n-1}uw{\rm d}\rho\nonumber\\[1mm]
&&-z-n\int_0^{s^{\frac1n}} \rho^{n-1}uw{\rm d}\rho+n\int_0^{s^{\frac1n}} \rho^{n-1}\varphi {\rm d}\rho.\label{4.11}
\ees

Inspired by Ref. \refcite{TW-S21}, the nonlocal terms in \eqref{4.11} can be transferred into a more friendly version due to Lemma \ref{l4.1}.

{\begin{lemma}\label{l4.2} Assume that $\kappa=0$, $n\in\{2,3\}$, and let $\mu>0$ and $\beta\ge\alpha>0$. Then for any $\ep\in\left(0,\frac{2}{n}\right)$, there exist $\gamma_1=\gamma_1(\mu,\alpha,\beta,\ep)>0$, $\gamma_2=\gamma_2(\alpha)>0$ and $\gamma_3=\gamma_3(\beta)>0$ as well as $T_*=T_*(\mu,\alpha,\beta,\ep)>0$ such that if $u_0$, $w_0$ satisfy \eqref{1.3z}, \eqref{1.4z} and \eqref{1.5x} with $\mu, \alpha$ and $\beta$, then the function $z$ defined in \eqref{4.10} satisfies
\bes
z_t(s,t)\ge n^2s^{\frac{2n-2}{n}}D(z_s)z_{ss}-\gamma_1\chi s^{\frac2n-\ep}z_s+\gamma_2\chi zz_s-\gamma_3z\label{4.12}
\ees
for all $(s,t)\in\left(0,R^n\right)\times(0,\,\min\{T_*,\,T_{\rm max}\})$.
\end{lemma}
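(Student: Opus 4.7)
My plan is to begin with the pointwise identity \eqref{4.11} derived in the paragraph preceding the lemma and to estimate, term by term, the four nonlocal expressions on its right-hand side by means of the a priori bounds supplied by Lemma~\ref{l4.1}. All estimates will be carried out on the time interval $(0,\min\{T_*,T_{\rm max}\})$ on which \eqref{3c.36}--\eqref{3c.38} are in force; the auxiliary H\"older exponent $p\in\left(\frac n2,\frac{n}{n-2}\right)$ from Lemma~\ref{l4.1} will be fixed below in terms of $\ep$.

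For the $v$-integral, I would apply H\"older's inequality on $(0,s^{1/n})$ with exponents $p$ and $p/(p-1)$, combine it with $\int_0^{s^{1/n}}\rho^{n-1}{\rm d}\rho=s/n$ and the $L^p$-estimate \eqref{3c.38}, to arrive at
\bess
n\int_0^{s^{1/n}}\rho^{n-1}v(\rho,t)\,{\rm d}\rho\le C(K,n,p)\,s^{\frac{p-1}{p}}
\eess
for all $(s,t)\in(0,R^n)\times(0,\min\{T_*,T_{\rm max}\})$. Since $\frac{p-1}{p}$ sweeps the open interval $\left(1-\frac{2}{n},\frac{2}{n}\right)$ as $p$ varies in $\left(\frac n2,\frac{n}{n-2}\right)$, and $n\in\{2,3\}$, for every $\ep\in(0,2/n)$ one may choose $p$ close enough to $\frac{n}{n-2}$ so that $\frac{p-1}{p}>\frac{2}{n}-\ep$; boundedness of $(0,R^n)$ then yields $s^{(p-1)/p}\le C(R,n,p,\ep)\,s^{\frac{2}{n}-\ep}$. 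Multiplying by the nonnegative factor $\chi nz_s$ (recall $z_s=u\ge 0$) converts this estimate into the desired contribution $-\gamma_1\chi s^{2/n-\ep}z_s$ with $\gamma_1=\gamma_1(\mu,\alpha,\beta,\ep)$.

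The remaining nonlocal terms are handled by the pointwise bounds $\alpha/2\le w\le 2\beta$ from \eqref{3c.37} together with $\varphi\ge 0$. The lower bound on $w$ yields
\bess
\chi nz_s\int_0^{s^{1/n}}\rho^{n-1}uw\,{\rm d}\rho\ge\frac{\alpha}{2}\,\chi z_s\cdot n\int_0^{s^{1/n}}\rho^{n-1}u\,{\rm d}\rho=\frac{\alpha}{2}\chi zz_s,
\eess
so $\gamma_2=\alpha/2$. The upper bound on $w$, together with simply discarding the nonnegative $\varphi$-integral, produces
\bess
-z-n\int_0^{s^{1/n}}\rho^{n-1}uw\,{\rm d}\rho+n\int_0^{s^{1/n}}\rho^{n-1}\varphi\,{\rm d}\rho\ge -(1+2\beta)z,
\eess
which identifies $\gamma_3=1+2\beta$.

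The only delicate step is the choice of $p$: it must simultaneously respect the range constraint $p\in\left(\frac n2,\frac{n}{n-2}\right)$ required by Lemma~\ref{l4.1} and manufacture the target temporal power $s^{2/n-\ep}$. Since $(p-1)/p$ can be driven arbitrarily close to $\frac{2}{n}$ from below while $p$ remains in this open interval, the requirement $\frac{p-1}{p}>\frac{2}{n}-\ep$ is achievable for every $\ep\in(0,2/n)$, and feeding the resulting $p$ back into Lemma~\ref{l4.1} yields the final $T_*=T_*(\mu,\alpha,\beta,\ep)$. Summing the four term-wise estimates into \eqref{4.11} then produces \eqref{4.12}.
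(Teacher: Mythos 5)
Your proposal is correct and follows essentially the same route as the paper: starting from \eqref{4.11}, you bound the $v$-integral via H\"older's inequality together with \eqref{3c.38} (choosing $p\in\left(\frac n2,\frac n{n-2}\right)$ so that $\frac{p-1}{p}\ge\frac2n-\ep$ and absorbing the excess power of $s$ using $s\le R^n$), and you handle the remaining nonlocal terms with the pointwise bounds $\frac\alpha2\le w\le2\beta$ from \eqref{3c.37} and the sign of $\varphi$, arriving at the same constants $\gamma_2=\frac\alpha2$ and $\gamma_3=2\beta+1$. The only cosmetic difference is that you apply H\"older with respect to the radial measure $\rho^{n-1}{\rm d}\rho$ rather than on the ball $B_{s^{1/n}}(0)$ as the paper does, which is equivalent.
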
}
\begin{proof}
The idea of the proof comes from Lemma 4.2 of Ref. \refcite{TW-S21}. Owing to $n\in\{2,3\}$, it is easy to see that $\frac n2<\frac{n}{n-2}$. For given $\ep\in\left(0,\frac{2}{n}\right)$, we fix $p=p(\ep)\in\left(\frac n2,\frac{n}{n-2}\right)$ such that
\bess
\frac{p-1}{p}\ge \frac{2}{n}-\ep.
\eess
For such $p$, we let $T_*=T_*(\mu,\alpha,\beta,\ep)>0$ be given by Lemma \ref{l4.1}.
Due to \eqref{3c.38}, there holds
\bes
\int_0^{s^{\frac1n}} \rho^{n-1}v{\rm d}\rho&=&\dd\frac{1}{n|B_1(0)|}\int_{B_{s^{\frac1n}}(0)} v{\rm d}x\nonumber\\[1mm]
&\le&\dd\frac{1}{n|B_1(0)|}\left(\int_\Omega v^p{\rm d}x\right)^{\frac{1}{p}} \left|B_{s^{\frac1n}}(0)\right|^{\frac{p-1}{p}}\nonumber\\[1mm]
&\le&\dd\frac{K}{n|B_1(0)|} \left|B_{s^{\frac1n}}(0)\right|^{\frac{p-1}{p}}\nonumber\\[1mm]
&\le&\dd\frac{K}{n|B_1(0)|} \left|B_1(0)\right|^{\frac{p-1}{p}} s^{\frac{p-1}{p}}\nonumber\\[1mm]
&\le&\dd\frac{K}{n|B_1(0)|^{\frac1p}} \left(R^n\right)^{\frac{p-1}{p}-\left(\frac2n-\ep\right)} s^{\frac2n-\ep}
 \label{4.14}\ees
for $(s,t)\in (0,R^n)\times(0,\,\min\{T_*,\,T_{\rm max}\})$. Thus, for any $\ep\in\left(0,\frac{2}{n}\right)$, it follows from \eqref{4.14} that
\bes
-\chi nz_s\int_0^{s^{\frac1n}} \rho^{n-1}v{\rm d}\rho\ge -\gamma_1\chi s^{\frac2n-\ep}z_s\;\;\ {\rm in}\ \left(0,R^n\right)\times(0,\,\min\{T_*,\,T_{\rm max}\}),\label{4.16}
\ees
where $\gamma_1=\frac{K\left(R^n\right)^{\frac{p-1}{p}-\left(\frac2n-\ep\right)}}{|B_1(0)|^{\frac1p}}$.
It follows from \eqref{3c.37} that
\bes
\chi nz_s\int_0^{s^{\frac1n}} \rho^{n-1}uw{\rm d}\rho\ge \frac{\alpha \chi nz_s}{2}\int_0^{s^{\frac1n}} \rho^{n-1}u{\rm d}\rho=\frac{\alpha \chi zz_s}{2}\label{4.17}
\ees
for all $(s,t)\in\left(0,R^n\right)\times (0,\,\min\{T_*,\,T_{\rm max}\})$ and
 \bes
n\int_0^{s^{\frac1n}}\rho^{n-1}uw{\rm d}\rho
 &\le& 2n\beta\int_0^{s^{\frac1n}} \rho^{n-1}u{\rm d}\rho\nonumber\\[1mm]
 &=&2\beta z\;\;\ {\rm in}\ \left(0,R^n\right)\times(0,\,\min\{T_*,\,T_{\rm max}\}).\label{4.18}
\ees
Plugging \eqref{4.16}-\eqref{4.18} into \eqref{4.11} yields \eqref{4.12} with $\gamma_2=\frac\alpha2$ and $\gamma_3=2\beta+1$.
\end{proof}

We will employ the ideas from Ref. \refcite{Wang-W23} to demonstrate the blow-up phenomenon. We next turn \eqref{4.12} into {an ODI for the function} $\int_0^{r^n} s^{-\eta}z(s,t){\rm d}s$ for $r\in(0,R]$ and $\eta>0$. Here, we remark that the assumption $0<m<1$ is used in the derivations of \eqref{4.22} and \eqref{4.23}.

{\begin{lemma}\label{l4.4}
Assume that $\kappa=0$, $n\in\{2,3\}$, $0<m<1$ and $D(h)\le K_Dh^{m-1}$ for all $h\ge0$ with some $K_D>0$. Let $\mu>0$ and $\beta\ge\alpha>0$, $\ep\in\left(0,\frac2n\right)$, $0<\eta<\min\{2-\frac2n-m,\ \frac2n-\ep\}$ and $\lambda\in\left(0,2-m-\frac2n-\eta\right)$, and let $z(s,t)$ be given by \eqref{4.10} and
 \bess
y(r,t):=\int_0^{r^n} s^{-\eta}z(s,t){\rm d}s.
 \eess
Then there exist $T_*=T_*(\mu,\alpha,\beta,\ep)>0$, $\gamma_1=\gamma_1(\mu,\alpha,\beta,\ep)>0$, $\gamma_2=\gamma_2(\alpha)>0$ and $\gamma_3=\gamma_3(\beta)>0$ such that if $u_0$ and $w_0$ satisfy \eqref{1.3z}, \eqref{1.4z} and \eqref{1.5x} with $\mu, \alpha$ and $\beta$, then
 \bes
y_t(r,t)&\ge& \dd\frac{\eta\gamma_2\chi(2-\eta)}{2r^{n(2-\eta)}} y^2(r,t)-\gamma_3 y(r,t)-\gamma_1\chi r^{n(\frac2n-\ep-\eta)}z(r^n,t)\nonumber\\[1mm]
&&-\dd\frac{n^2K_D\left(2-\frac2n-\eta\right)}{m}\left(r^{\frac{n\lambda}{m}}z(r^n,t)
+\frac{r^{n\xi}}\xi\right)\label{4.22a}
\ees
for all $r\in(0,R]$ and $t\in(0,\,\min\{T_*,\,T_{\rm max}\})$, where
 \bess\xi:=\frac 1{1-m}\left(1-\frac2n-\eta-\lambda\right)+1>0.\eess
\end{lemma}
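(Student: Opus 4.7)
\textbf{Proof plan for Lemma \ref{l4.4}.} The natural route is to multiply the pointwise inequality \eqref{4.12} by the integrating factor $s^{-\eta}$ (which is positive on $(0,r^n]$) and integrate over $s\in(0,r^n)$. The LHS becomes exactly $y_t(r,t)$, and the absorption term transforms into $-\gamma_3 y(r,t)$ with no work. The only nontrivial task is to produce the quadratic expression $y^2/r^{n(2-\eta)}$ from the taxis term, the boundary-valued terms $r^{n(2/n-\varepsilon-\eta)} z(r^n,t)$ and $r^{n\lambda/m} z(r^n,t)$ from the chemical-transport and diffusion terms, and the purely $r$-dependent remainder $r^{n\xi}$ from the diffusion.

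For the taxis contribution I would write $zz_s = \frac{1}{2}(z^2)_s$ and integrate by parts. Since $z(0,t)=0$ and $\eta<2$, the boundary term at $s=0$ vanishes, while the boundary term at $s=r^n$ is nonnegative and may be discarded in the lower bound. This gives
\[
\gamma_2\chi\int_0^{r^n}s^{-\eta}zz_s\,ds \;\ge\; \frac{\gamma_2\chi\,\eta}{2}\int_0^{r^n}s^{-\eta-1}z^2\,ds,
\]
and a single application of Cauchy--Schwarz in the form $y^2=\bigl(\int_0^{r^n}s^{(1-\eta)/2}\cdot s^{-(1+\eta)/2}z\,ds\bigr)^2\le \frac{r^{n(2-\eta)}}{2-\eta}\int_0^{r^n}s^{-\eta-1}z^2\,ds$ produces the desired quadratic lower bound with the precise constant $\eta(2-\eta)/2$. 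The chemical-transport term is handled immediately by monotonicity: since $\eta<\tfrac2n-\varepsilon$, the exponent $\tfrac2n-\varepsilon-\eta$ is positive, so $s^{2/n-\varepsilon-\eta}\le r^{n(2/n-\varepsilon-\eta)}$ on $[0,r^n]$, and $\int z_s\,ds=z(r^n,t)$.

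The diffusion term is the one that needs care. I would set $\Phi(h):=\int_0^h D(\tau)\,d\tau$, so that $D(z_s)z_{ss}=\partial_s\Phi(z_s)$, and integrate by parts to obtain
\[
n^2\!\int_0^{r^n}\!\!s^{2-\frac2n-\eta}D(z_s)z_{ss}\,ds = n^2 r^{n(2-\frac2n-\eta)}\Phi(z_s(r^n,t))-n^2\bigl(2-\tfrac2n-\eta\bigr)\!\int_0^{r^n}\!\!s^{1-\frac2n-\eta}\Phi(z_s)\,ds.
\]
The first term is nonnegative (and can be dropped); the boundary term at $s=0$ vanishes because $2-\tfrac2n-\eta>0$. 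Using the hypothesis $D(h)\le K_D h^{m-1}$ (which gives $\Phi(h)\le K_D h^m/m$) and Young's inequality with conjugate exponents $1/m$ and $1/(1-m)$ in the pointwise form
\[
z_s^{m}\cdot s^{\alpha}\;\le\; m\,z_s+(1-m)\,s^{\alpha/(1-m)},
\]
applied with the exact choice $\alpha = 1-\tfrac2n-\eta-\tfrac{\lambda}{m}$, one finds $s^{1-2/n-\eta}z_s^m\le m\,s^{\lambda/m}z_s+(1-m)s^{\xi-1}$. Integrating, using $s^{\lambda/m}\le r^{n\lambda/m}$ on $[0,r^n]$ (since $\lambda/m>0$), yields
\[
\int_0^{r^n}s^{1-\frac2n-\eta}z_s^m\,ds\le m\,r^{n\lambda/m}z(r^n,t)+\frac{1-m}{\xi}\,r^{n\xi},
\]
and inserting this, together with $(1-m)<1$, gives the diffusion contribution in exactly the form claimed.

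The main obstacle is the algebraic bookkeeping in the diffusion step: the auxiliary parameter $\lambda$ has been tuned precisely so that Young's inequality balances a $z_s$-term with a pure power of $s$, and the range $0<\lambda<2-m-\tfrac2n-\eta$ is exactly what ensures $\xi>0$ so that the final $s$-integral is finite. Once this choice of Young exponent is made, the four pieces combine additively into \eqref{4.22a}.
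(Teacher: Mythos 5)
Your proposal is correct and follows essentially the same route as the paper: multiply \eqref{4.12} by $s^{-\eta}$, integrate over $(0,r^n)$, integrate by parts in the taxis and diffusion terms, bound $D_0(z_s)\le K_D z_s^m/m$, and split the resulting weight via Young's inequality with exponents $\frac1m,\frac1{1-m}$ exactly as tuned by $\lambda$ and $\xi$, finishing with the Cauchy--Schwarz step that yields the $y^2/r^{n(2-\eta)}$ term. The only deviations are cosmetic: for the chemical-transport term and for $\int_0^{r^n}s^{\lambda/m}z_s\,{\rm d}s$ you use monotonicity of the weight and $z_s\ge0$ instead of the paper's second integration by parts, and you justify the vanishing boundary term in the taxis piece via the linear vanishing of $z$ at $s=0$ (boundedness of $z_s=u$) rather than the paper's L'H\^opital argument with $\eta<1$; both give the same bounds.
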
}

\begin{proof} For the fixed $\ep\in\left(0,\frac2n\right)$. Let $\gamma_1,\gamma_2,\gamma_3$ and $T_*$ be given by Lemma \ref{l4.2}. It then follows from \eqref{4.12} that, as $y(r,t)=\int_0^{r^n} s^{-\eta}z(s,t){\rm d}s$,
\bes
\int_0^{r^n}\!\!s^{-\eta}z_t(s,t){\rm d}s
&\ge& \int_0^{r^n}\!\!s^{-\eta}\left(n^2s^{\frac{2n-2}{n}}D(z_s)z_{ss}-\gamma_1\chi s^{\frac2n-\ep}z_s+\gamma_2\chi zz_s-\gamma_3z\right){\rm d}s\nonumber\\[1mm]
&=&n^2\int_0^{r^n}\!\!s^{2-\frac2n-\eta}D(z_s)z_{ss}{\rm d}s-\gamma_1\chi\int_0^{r^n}\!\!s^{\frac2n-\ep-\eta}z_s{\rm d}s\nonumber\\[1mm]
&&+\gamma_2\chi\int_0^{r^n}\!\!s^{-\eta}zz_s{\rm d}s-\gamma_3y(r,t)\nonumber\\[1mm]
&=:&J_1+J_2+J_3-\gamma_3y(r,t)\label{4.21a}
\ees
for $(r,t)\in\left(0,R\right]\times(0,\,\min\{T_*,\,T_{\rm max}\})$. Since $0<m<1$ and $0<D(h)\le K_D h^{m-1}$ for all $h\ge0$, we have $D_0(\sigma):=\int_0^\sigma D(s)ds\le \frac{K_D\sigma^m}{m}$ for any $\sigma>0$. This combined with $\eta<2-\frac2n-m$ yields that for all $(r,t)\in\left(0,R\right]\times(0,\min\{T_*,T_{\rm max}\})$,
\bes
J_1&=&n^2\int_0^{r^n}s^{2-\frac2n-\eta}D(z_s)z_{ss}{\rm d}s\nonumber\\[1mm]
&=&n^2\int_0^{r^n}s^{2-\frac2n-\eta}\left(D_0(z_s)\right)_s{\rm d}s\nonumber\\[1mm]
&=&n^2s^{2-\frac2n-\eta}D_0(z_s)|_0^{r^n}-n^2\left(2-\frac2n-\eta\right)
\int_0^{r^n}s^{1-\frac2n-\eta}D_0(z_s){\rm d}s\nonumber\\[1mm]
&\ge&-n^2\left(2-\frac2n-\eta\right)\int_0^{r^n}s^{1-\frac2n-\eta}D_0(z_s){\rm d}s\nonumber\\[1mm]
&\ge&-\dd\frac{n^2K_D\left(2-\frac2n-\eta\right)}{m}\int_0^{r^n}s^{1-\frac2n-\eta}z_s^{m}{\rm d}s.\label{4.22}
\ees
Due to $\eta<2-\frac2n-m$ and $\lambda\in\left(0,2-m-\frac2n-\eta\right)$, we know that \[\frac{\lambda}{m}>0,\;\;\;{\rm and}\;\;\xi:=\frac{1}{1-m}\left(1-\frac2n-\eta-\lambda\right)+1>0.\]
Owing to $0<m<1$, we can apply Young's inequality to find that
\bes
\int_0^{r^n}s^{1-\frac2n-\eta}z_s^{m}{\rm d}s
&=&\int_0^{r^n}s^\lambda z_s^m s^{1-\frac2n-\eta-\lambda}{\rm d}s\nonumber\\[1mm]
&\le&\int_0^{r^n}s^{\frac{\lambda}{m}} z_s{\rm d}s+\int_0^{r^n}s^{\frac1{1-m}\left(1-\frac2n-\eta-\lambda\right)}{\rm d}s\nonumber\\[1mm]
&=&s^{\frac{\lambda}{m}}z|_0^{r^n}-\dd\frac{\lambda}{m}\int_0^{r^n}s^{\frac{\lambda}{m}-1}z{\rm d}s+\int_0^{r^n}s^{\frac1{1-m}\left(1-\frac2n-\eta-\lambda\right)}{\rm d}s\nonumber\\[1mm]
&\le&s^{\frac{\lambda}{m}}z|_0^{r^n}+\int_0^{r^n}s^{\frac1{1-m}\left(1-\frac2n-\eta-\lambda\right)}{\rm d}s\nonumber\\[1mm]
&\le&r^{\frac{n\lambda}{m}}z(r^n,t)
+\dd\frac{r^{n\xi}}{\xi}
\label{4.23}
\ees
for $(r,t)\in\left(0,R\right]\times(0,\min\{T_*,T_{\rm max}\})$. Plugging \eqref{4.23} into \eqref{4.22} yields that
\bes
J_1\ge-\frac{n^2K_D\left(2-\frac2n-\eta\right)}{m}\left(r^{\frac{n\lambda}{m}}z(r^n,t)
+\frac{r^{n\xi}}{\xi}\right)\label{4.24}
\ees
for $(r,t)\in\left(0,R\right]\times(0,\min\{T_*,T_{\rm max}\})$. Thanks to $\eta<\frac2n-\ep$, it is easy to see that
\bes
J_2&=&-\gamma_1\chi\int_0^{r^n}s^{\frac2n-\ep-\eta}z_s{\rm d}s\nonumber\\[1mm]
&=&-\gamma_1\chi s^{\frac2n-\ep-\eta}z|_0^{r^n}+\gamma_1\chi\left(\frac2n-\ep-\eta\right)
\int_0^{r^n}s^{\frac2n-\ep-\eta-1}z{\rm d}s\nonumber\\[1mm]
&\ge&-\gamma_1\chi r^{n(\frac2n-\ep-\eta)}z(r^n,t)\label{4.25}
\ees
for $(r,t)\in\left(0,R\right]\times(0,\min\{T_*,T_{\rm max}\})$. By H\"{o}lder's inequality,
 \bess
y(r,t)=\int_0^{r^n}s^{-\eta}z{\rm d}s&\le&\left(\int_0^{r^n}s^{1-\eta}{\rm d}s\right)^{\frac12}\left(\int_0^{r^n}s^{-\eta-1}z^2{\rm d}s\right)^{\frac12}\\[1mm]
 &\le&\left(\dd\frac{r^{n(2-\eta)}}{2-\eta}\right)^{\frac12}
 \left(\int_0^{r^n}s^{-\eta-1}z^2{\rm d}s\right)^{\frac12},
 \eess
and thus
 \bes
\int_0^{r^n}s^{-\eta-1}z^2{\rm d}s\ge \frac{2-\eta}{r^{n(2-\eta)}}y^2(r,t)\label{4.26}
 \ees
for $(r,t)\in\left(0,R\right]\times(0,\min\{T_*,T_{\rm max}\})$. In view of \eqref{4.26}, it follows that
\bes
J_3&=&\gamma_2\chi\int_0^{r^n}s^{-\eta}zz_s{\rm d}s\nonumber\\[1mm]
&=&\dd\frac{\gamma_2\chi}{2} s^{-\eta}z^2|_0^{r^n}+\frac{\eta\gamma_2\chi}{2}\int_0^{r^n}s^{-\eta-1}z^2{\rm d}s\nonumber\\[1mm]
&=&\dd\frac{\gamma_2\chi}{2} r^{-n\eta}z^2(r^n,t)+\frac{\eta\gamma_2\chi}{2}\int_0^{r^n}s^{-\eta-1}z^2{\rm d}s\nonumber\\[1mm]
&\ge&\dd\frac{\eta\gamma_2\chi(2-\eta)}{2r^{n(2-\eta)}}
y^2(r,t)\label{4.27}
\ees
for $(r,t)\in\left(0,R\right]\times(0,\,\min\{T_*,\,T_{\rm max}\})$, where we have used $\lim_{s\rightarrow0}s^{-\eta}z^2(s,t)=0$ by use of $\eta<1$ and L'H\^{o}pital's rule. Inserting \eqref{4.24}, \eqref{4.25} and \eqref{4.27} into \eqref{4.21a} we can obtain
the inequality \eqref{4.22a}.
\end{proof}

We next infer from \eqref{4.22a} that the maximal time of existence $T_{\rm max}$ must be finite.

{\begin{lemma}\label{l4.5} Assume that $\kappa=0$, $n\in\{2,3\}$, $0<m<1$ and $D(h)\le K_Dh^{m-1}$ for all $h\ge0$ and some constant $K_D>0$. Let $\mu>0$ and $\beta\ge\alpha>0$. Then there exists a small $r_*=r_*(\mu,\alpha,\beta,m,n)>0$ such that whenever $u_0$ and $w_0$ satisfy \eqref{1.3z}, \eqref{1.4z} and \eqref{1.5x} as well as
\bess
\int_{B_{r_*}(0)} u_0{\rm d}x\ge \frac{\mu}{2},
 \eess
then we have $T_{\rm max}<\infty$.
\end{lemma}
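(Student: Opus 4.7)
The plan is to turn the pointwise differential inequality \eqref{4.22a} into a genuinely autonomous scalar ODI at a well-chosen evaluation radius, and then contrast its forced finite-time blow-up with the a priori boundedness of $y$ guaranteed by the $L^1$ mass control on $u$ coming from Lemma \ref{l4.1}.

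First, I would fix admissible parameters $\ep\in(0,2/n)$, $\eta\in\big(0,\min\{2-2/n-m,\,2/n-\ep\}\big)$ and $\lambda\in(0,2-m-2/n-\eta)$, all chosen once and for all independently of $r_*$; such choices exist thanks to $0<m<1$ and $n\in\{2,3\}$, and they render every exponent of $r$ in \eqref{4.22a} other than the one in front of $y^2$ strictly positive. Let $T_*=T_*(\mu,\alpha,\beta,\ep)$ and $\gamma_1,\gamma_2,\gamma_3$ be the constants produced by Lemma \ref{l4.4}. On the full interval $(0,\min\{T_*,T_{\rm max}\})$, Lemma \ref{l4.1} delivers the uniform bound $z(s,t)\le\frac{2\mu}{|B_1(0)|}$ for every $s\in(0,R^n)$. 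Requiring $r_*\le R/2$ and substituting $r=\hat r:=2r_*$ into \eqref{4.22a} therefore yields an inequality of the shape
\[
\frac{d}{dt}y(\hat r,t)\ \ge\ \frac{A}{r_*^{n(2-\eta)}}\,y^2(\hat r,t)\ -\ B\,y(\hat r,t)\ -\ C\Big(r_*^{2-n\ep-n\eta}+r_*^{n\lambda/m}+r_*^{n\xi}\Big),
\]
with $A,B,C>0$ depending on $\mu,\alpha,\beta,m,n,\chi$ but not on $r_*$.

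Second, the concentration hypothesis $\int_{B_{r_*}(0)}u_0\,{\rm d}x\ge\mu/2$ together with the monotonicity of $s\mapsto z(s,0)$ gives $z(s,0)\ge\frac{\mu}{2|B_1(0)|}$ for $s\ge r_*^n$, whence
\[
y(\hat r,0)\ \ge\ \int_{r_*^n}^{(2r_*)^n}s^{-\eta}\,z(s,0)\,{\rm d}s\ \ge\ \kappa\mu\,r_*^{n(1-\eta)},\qquad \kappa:=\frac{2^{n(1-\eta)}-1}{2|B_1(0)|(1-\eta)}>0.
\]
Because $n(1-\eta)<n(2-\eta)$ and every exponent of $r_*$ inside the $C$-bracket is strictly positive, for $r_*$ below a threshold depending only on $\mu,\alpha,\beta,m,n,\chi$ the quantity $\kappa\mu r_*^{n(1-\eta)}$ dominates both $\frac{4B\,r_*^{n(2-\eta)}}{A}$ and $\sqrt{\frac{4C}{A}\,r_*^{n(2-\eta)}\big(r_*^{2-n\ep-n\eta}+r_*^{n\lambda/m}+r_*^{n\xi}\big)}$. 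A standard continuity argument then shows that $\{t:y(\hat r,t)\ge y(\hat r,0)\}$ is forward invariant, and on that set the ODI collapses to
\[
\frac{d}{dt}y(\hat r,t)\ \ge\ \frac{A}{2r_*^{n(2-\eta)}}\,y^2(\hat r,t),
\]
so that ODE comparison forces $y(\hat r,t)\to\infty$ no later than $T_0:=\frac{2r_*^n}{A\kappa\mu}$.

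Third, the same bound $z(s,t)\le\frac{2\mu}{|B_1(0)|}$ also yields the a priori estimate $y(\hat r,t)\le\frac{2\mu\,(2r_*)^{n(1-\eta)}}{|B_1(0)|(1-\eta)}$ on $(0,\min\{T_*,T_{\rm max}\})$, ruling out any blow-up of $y$ on that interval. Reconciling this with the forced ODE blow-up by $T_0$ leaves only the possibility $\min\{T_*,T_{\rm max}\}\le T_0$; a final shrinking of $r_*$ so that $\frac{2r_*^n}{A\kappa\mu}<T_*$ therefore pins down $T_{\rm max}\le T_0<T_*<\infty$, as required. The main obstacle I anticipate is not any single estimate but careful bookkeeping: one must transparently verify that $A,B,C,\kappa$ and $T_*$ are all genuinely independent of $r_*$, and that each of the three powers $2-n\ep-n\eta$, $n\lambda/m$ and $n\xi$ does beat $r_*^{n(1-\eta)}$ in the limit $r_*\downarrow 0$, which is exactly the reason the parameter choices $\ep+\eta<2/n$, $\lambda>0$ and $\lambda<2-m-2/n-\eta$ (so that $\xi>0$) were made at the outset.
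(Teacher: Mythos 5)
Your proposal is correct and follows essentially the same route as the paper: evaluate the ODI of Lemma \ref{l4.4} at a radius comparable to the concentration radius, use the mass bound $z\le 2\mu/|B_1(0)|$ from Lemma \ref{l4.1} to absorb the lower-order terms, exploit the positivity of the exponents $2-n\ep-n\eta$, $n\lambda/m$, $n\xi$ to make the quadratic term dominant for small $r_*$, and contrast the resulting superlinear ODI (blow-up by time of order $r_*^n$) with the fact that $T_*$ is independent of $r_*$. The only differences are cosmetic: you evaluate $y$ at $2r_*$ with concentration in $B_{r_*}$ and conclude by directly comparing $T_0$ with $\min\{T_*,T_{\rm max}\}$, whereas the paper evaluates at $r_*$, concentrates in $B_{r_*/2}$, and phrases the smallness of $r_*$ through the explicit conditions \eqref{4.28}--\eqref{4.30}, \eqref{4.36a} and a contradiction after integrating \eqref{4.37}.
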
}
\begin{proof} We first introduce $\ep\in\left(0,\frac2n\right)$, $0<\eta<\min\{2-\frac2n-m,\ \frac2n-\ep\}$ and $\lambda\in\left(0,2-m-\frac2n-\eta\right)$. Let $T_*=T_*(\mu,\alpha,\beta,\ep)>0$ be given by Lemma \ref{l4.2}. Suppose that $T_{\rm max}>T_*$. Then, $T_*=\min\{T_*,T_{\rm max}\}$. Let $\gamma_1=\gamma_1(\mu,\alpha,\beta,\ep)>0$, $\gamma_2=\gamma_2(\alpha)>0$ and $\gamma_3=\gamma_3(\beta)>0$ be given by Lemma \ref{l4.2}. We denote
\bess
C_1:=\frac{\mu\left(1-\left(\frac{1}2\right)^{n(1-\eta)}\right)}{2(1-\eta)|B_1(0)|}>0,\;\;
\xi:=\frac 1{1-m}\left(1-\frac2n-\eta-\lambda\right)+1>0.
\eess
Then, there exists $r_*=r_*(\mu,\alpha,\beta,m,n,\ep,\eta,\lambda)>0$ small enough such that
\bes
\frac{\eta\gamma_2(2-\eta)C_1^2}{32r_*^{n\eta}}
\ge \frac{2\mu\gamma_1 r_*^{n(\frac2n-\ep-\eta)}}{\left|B_1(0)\right|},\label{4.28}
\ees
and
\bes
\frac{\eta\gamma_2\chi(2-\eta)C_1}{16r_*^n} \ge\gamma_3,\label{4.29}
\ees
and
\bes
\frac{\eta\gamma_2(2-\eta)C_1^2}{32r_*^{n\eta}}\ge  \frac{n^2K_D\left(2-\frac2n-\eta\right)}{m}\left(\frac{2\mu r_*^{\frac{n\lambda}{m}}}{\left|B_1(0)\right|}+
\frac{r_*^{n\xi}}{\xi}\right),\quad\label{4.30}
\ees
as well as
\bes
\frac{\eta\gamma_2\chi(2-\eta)}{8r_*^{n(2-\eta)}} T_*> \frac{2}{C_1r_*^{n(1-\eta)}}.\label{4.36a}
\ees
Noticing that $T_*$ depends only on $\mu,\alpha,\beta$ and $\ep$, hence \eqref{4.36a} is possible. Suppose that
 \[\int_{B_{\frac{r_*}{2}}(0)}u_0{\rm d}x\ge\frac{\mu}{2}\]
from now on. Then we have
\bess
z(r^n,0)=\frac{1}{|B_1(0)|}\int_{B_r(0)}u_0{\rm d}x\ge\frac{1}{|B_1(0)|}\int_{B_{\frac{r_*}{2}}(0)}u_0{\rm d}x\ge \frac{\mu}{2|B_1(0)|}
\eess
for all $r\in\left(\frac{r_*}2,r_*\right)$. This implies that
\bess
y(r_*,0)&\ge&\int_{\left(\frac{r_*}{2}\right)^n}^{r_*^n} s^{-\eta}z(s,0)ds\nonumber\\[1mm]
&\ge& \dd\frac{\mu}{2|B_1(0)|}\int_{\left(\frac{r_*}{2}\right)^n}^{r_*^n} s^{-\eta}ds\nonumber\\[1mm]
&=& \dd\frac{ \mu\left(r_*^{n(1-\eta)}-\left(\frac{r_*}2\right)^{n(1-\eta)}\right)}{2(1-\eta)|B_1(0)|}\nonumber\\[1mm]
&=&C_1r_*^{n(1-\eta)}.
\eess
Hence,
\bess
\tilde T:=\sup\left\{T\in(0,T_*):\,  y(r_*,t)> \frac{C_1r_*^{n(1-\eta)}}{2},\;\;\forall\;t\in[0,T)\right\}
\eess
is well-defined, and $\tilde T\in (0,T_*]$. By the second inequality of \eqref{3c.36}, we have $z(r_*^n,t)\le z(R^n,t)\le \frac{2\mu}{\left|B_1(0)\right|}$ for all $t\in[0,T_*)$. It then follows from \eqref{4.22a} and \eqref{4.28}-\eqref{4.30} that
\bess
y_t(r_*,t)&\ge& \dd\frac{\eta\gamma_2\chi(2-\eta)}{2r_*^{n(2-\eta)}} y^2(r_*,t)-\gamma_3 y(r_*,t)-\gamma_1\chi r_*^{n(\frac2n-\ep-\eta)}z(R^n,t)\\[1mm]
&&-\dd\frac{n^2K_D\left(2-\frac2n-\eta\right)}{m}\left(r_*^{\frac{n\lambda}m}z(R^n,t)
+\frac{r_*^{n\xi}}\xi\right)\\[1mm]
&\ge& \dd\frac{\eta\gamma_2\chi(2-\eta)}{2r_*^{n(2-\eta)}} y^2(r_*,t)-\gamma_3 y(r_*,t)-\frac{2\mu\gamma_1\chi r_*^{n(\frac2n-\ep-\eta)}}{\left|B_1(0)\right|}\\[1mm]
&&-\dd\frac{n^2K_D\left(2-\frac2n-\eta\right)}{m}\left(\frac{2\mu r_*^{\frac{n\lambda}{m}}}{\left|B_1(0)\right|}+\frac{r_*^{n\xi}}\xi\right)\\[1mm]
&=& \dd\frac{\eta\gamma_2\chi(2-\eta)}{8r^{n(2-\eta)}} y^2(r_*,t)+\left(\frac{\eta\gamma_2\chi(2-\eta)}{8r_*^{n(2-\eta)}} y(r_*,t)-\gamma_3\right) y(r_*,t)\\[1mm]
&&+\dd\frac{\eta\gamma_2\chi(2-\eta)}{8r_*^{n(2-\eta)}} y^2(r_*,t)-\frac{2\mu\gamma_1\chi r_*^{n(\frac2n-\ep-\eta)}}{\left|B_1(0)\right|}+\frac{\eta\gamma_2\chi(2-\eta)}{8r_*^{n(2-\eta)}} y^2(r_*,t)\\[1mm]
&&-\dd\frac{n^2K_D\left(2-\frac2n-\eta\right)}{m}\left(\frac{2\mu  r_*^{\frac{n\lambda}{m}}}{\left|B_1(0)\right|}+\frac{r_*^{n\xi}}\xi\right)\\[1mm]
&\ge&\dd\frac{\eta\gamma_2\chi(2-\eta)}{8r_*^{n(2-\eta)}} y^2(r_*,t)+\left(\frac{\eta\gamma_2\chi(2-\eta) C_1}{16r_*^n}-\gamma_3\right) y(r_*,t)\\[1mm]
&&+\dd\frac{\eta\gamma_2\chi(2-\eta) C_1^2}{32r_*^{n\eta}}-\frac{2\mu\gamma_1\chi r_*^{n(\frac2n-\ep-\eta)}}{\left|B_1(0)\right|}\\[1mm]
&&+\frac{\eta\gamma_2\chi(2-\eta) C_1^2}{32r_*^{n\eta}}-\dd\frac{n^2K_D\left(2-\frac2n-\eta\right)}{m}\left(\frac{2\mu r_*^{\frac{n\lambda}{m}}}{\left|B_1(0)\right|}+\frac{r_*^{n\xi}}\xi\right)\\[1mm]
&\ge&\dd\frac{\eta\gamma_2\chi(2-\eta)}{8r_*^{n(2-\eta)}} y^2(r_*,t),\;\;\forall\, t\in(0,\tilde T).
 \eess
Hence, $y$ is nondecreasing in $(0,\tilde T)$. Thus, it is easy to infer that $\tilde T=T_*$ and
\bes
 y_t(r_*,t)\ge \frac{\eta\gamma_2\chi(2-\eta)}{8r_*^{n(2-\eta)}} y^2(r_*,t),\;\;\forall\, t\in(0,T_*). \label{4.37}
\ees
Integrating \eqref{4.37} over $t\in(0,T_*)$ yields that
\bess
\frac{\eta\gamma_2\chi(2-\eta)}{8r_*^{n(2-\eta)}}T_*\le \frac{1}{ y(r_*,0)}<\frac{2}{C_1r_*^{n(1-\eta)}}
\eess
This combined with \eqref{4.36a} results in a contraction. Therefore, we must have $T_{\rm max}\le T_*$.
\end{proof}

\noindent{\bf Proof of Theorem \ref{t1.2}.}
Let $r_*=r_*(\mu,\alpha,\beta,m,n)>0$ be given by Lemma \ref{l4.5}. One can obtain the conclusion by combining Lemma \ref{l4.5} and Lemma \ref{l2.1}.
\hfill{\fontsize{8.7pt}{8.7pt}$\Box$}

\section{Global boundedness in the case of $\kappa=1$}
\setcounter{equation}{0} {\setlength\arraycolsep{2pt}

Compared with the case of $\kappa=0$ (the second equation of \eqref{1.1} is a elliptic one), we have the uniform-in-time $L^1$-estimate for $v$ and a better regularity property for $w$, which will help us establish the global boundedness of the solution under a more relaxed condition on the diffusion exponent $m$. We always assume that the initial data satisfies \eqref{1.5z} in this section. On the basis of the $L^1$ boundedness feature in \eqref{2.6y}, one can infer the following regularity of $w$ from Lemma \ref{l2.3a}.

\begin{proposition}\label{p2.4} Let $\kappa=1$ and $n\ge2$. The, for any $p\in\big[1,\frac{n}{n-2}\big)$, there exists $C(p)>0$ fulfilling
\bess
\|w(\cdot,t)\|_p\le C(p),\;\;\forall\, t\in(0,T_{\rm max}).
\eess
\end{proposition}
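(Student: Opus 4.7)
The plan is to deduce Proposition \ref{p2.4} as a direct corollary of Lemma \ref{l2.3a} applied to the third equation of \eqref{1.1}. That equation, $w_t=\Delta w - w + v$, is exactly of the form treated in Lemma \ref{l2.3a}, with homogeneous Neumann boundary data and with initial datum $w_0\in W^{2,\infty}(\Omega)$ satisfying $\partial_\nu w_0 = 0$ by \eqref{1.5z}. The only missing input is a uniform-in-time $L^p$ bound on the forcing term $f:=v$.

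First I would invoke Lemma \ref{l2.5} to supply the bound $\|v(\cdot,t)\|_1\le C$ on $(0,T_{\rm max})$. This gives the hypothesis $\|f(\cdot,t)\|_p\le M$ required by Lemma \ref{l2.3a} with $p=1$. For $n\ge 2$ and $k=2$ we have $1=p\le n/k=n/2$, so Lemma \ref{l2.3a} returns a uniform bound $\|w(\cdot,t)\|_{q_2}\le C_2$ for any exponent $q_2$ in $[1,\frac{np}{n-kp})=[1,\frac{n}{n-2})$, interpreted as $[1,\infty)$ when $n=2$. Picking $q_2$ equal to the prescribed $p\in[1,\frac{n}{n-2})$ then yields the assertion with $C(p):=C_2$.

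I do not expect any real obstacle: the result reduces to a one-line application of the smoothing estimates for the Neumann heat semigroup already packaged in Lemma \ref{l2.3a}. Its novelty relative to the $\kappa=0$ setting lies entirely in the availability of a uniform pointwise-in-time $L^1$ bound on $v$ coming from the conservation-type identity for $u+v$ exploited in Lemma \ref{l2.5}; in the parabolic--elliptic case only a time-averaged estimate \eqref{2.4y} on $\int_\Omega v\,{\rm d}x$ was obtained, which explains why the stronger regularity of $w$ recorded here is available precisely when $\kappa=1$ and will later permit the more relaxed threshold $m>1+\frac n2-\frac2n$ in Theorem \ref{t1.3}.
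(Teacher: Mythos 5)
Your proposal is correct and is exactly the paper's argument: the paper also deduces Proposition \ref{p2.4} by combining the uniform $L^1$ bound on $v$ from \eqref{2.6y} (Lemma \ref{l2.5}) with the semigroup estimate of Lemma \ref{l2.3a} applied to the equation $w_t=\Delta w-w+v$, taking $p=1$, $k=2$ and $q_2\in\big[1,\frac{n}{n-2}\big)$. No gaps; your parameter check and the remark on why this is specific to $\kappa=1$ are both accurate.
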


In the case of $n\ge3$, we shall apply the maximal Sobolev inequality for heat equations to obtain the uniform-in-time $L^p$-estimate for $u$. Although the idea used in the following lemma is parallel to Lemma \ref{l3.3}, the selection of the parameters is different and more complicate. Therefore, we will give the necessary details in the proof.

\begin{lemma}\label{l5.2}
Let $\kappa=1$, $n\ge3$ and $m>1+\frac n2-\frac 2n$. Suppose that $D(h)\ge k_Dh^{m-1}$ for all $h\ge0$ and some constant $k_D>0$. Then, for any $p>\max\left\{1,\ \frac{2m}{n-2}-\frac2n\right\}$, there exist $q>\frac{p+m+2/n-1}{m+2/n-1}$ and {$C=C(p)>0$} such that
\bess
\|u(\cdot,t)\|_p\le C,\;\;\forall\,t\in(0,T_{\rm max}),
\eess
and
\bess
\int_0^t{\rm e}^{-\frac q2(t-s)}\|v(\cdot,s)\|_{W^{2,q}(\Omega)}^q{\rm d}s\le C,\;\; \forall\,t\in(0,T_{\rm max}).
\eess
\end{lemma}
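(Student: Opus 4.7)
The approach will mirror Lemma \ref{l3.3} but will exploit the improved regularity of $w$ furnished by Proposition \ref{p2.4} — namely $\|w(\cdot,t)\|_{q\sigma} \le C$ for any $q\sigma < n/(n-2)$ — in place of the maximal Sobolev regularity argument applied to the $w$-equation in the parabolic-elliptic-parabolic setting. This substitution is precisely what allows the hypothesis on $m$ to be relaxed from $m>2+n/2-2/n$ to $m>1+n/2-2/n$, since we no longer have to budget exponents for a second maximal-regularity estimate on $w$.

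\textbf{Step 1 (parameter selection).} For the fixed $p>\max\{1,\ 2m/(n-2)-2/n\}$, I would choose $q>(p+m+2/n-1)/(m+2/n-1)$ together with a H\"older exponent $\sigma>1$ and dual $\sigma'=\sigma/(\sigma-1)$ satisfying $q\sigma<n/(n-2)$ (so that $\|w\|_{q\sigma}^{q\sigma}$ is controlled directly by Proposition \ref{p2.4}) and $q\sigma'<p+m+2/n-1$ (so that Lemma \ref{l2.6} applies to $\|u\|_{q\sigma'}^{q\sigma'}$). The assumptions $m>1+n/2-2/n$ and $p>2m/(n-2)-2/n$ are precisely what make this window of parameters nonempty. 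A further auxiliary exponent $\rho$ will be needed to handle the Sobolev embedding of $v$ into $L^\rho$, and the analogue of \eqref{3.7} will constrain it.

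\textbf{Step 2 (maximal Sobolev regularity for $v$).} Since $\kappa=1$, I apply Lemma \ref{l2.2a} directly to $v_t=\Delta v-v+uw$ to obtain
\bess
\int_0^t e^{-\frac{q}{2}(t-s)}\|\Delta v(\cdot,s)\|_q^q\,ds\le C\int_0^t e^{-\frac{q}{2}(t-s)}\|(uw)(\cdot,s)\|_q^q\,ds+C\|v_0\|_{W^{2,q}(\Omega)}^q.
\eess
Young's inequality yields $\|uw\|_q^q\le \|u\|_{q\sigma'}^{q\sigma'}+\|w\|_{q\sigma}^{q\sigma}$, and the $w$-factor is absorbed into a constant via Proposition \ref{p2.4}. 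Combining with a Sobolev-embedding bound $\|v\|_\rho^q\le C\|v\|_{W^{2,q}(\Omega)}^q$ (as in \eqref{3.11}) gives the weighted estimate
\bess
\int_0^t e^{-\frac{q}{2}(t-s)}\|v(\cdot,s)\|_{W^{2,q}(\Omega)}^q\,ds\le C_1\int_0^t e^{-\frac{q}{2}(t-s)}\|u(\cdot,s)\|_{q\sigma'}^{q\sigma'}\,ds+C_1.
\eess

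\textbf{Step 3 (testing the $u$-equation and closure).} Multiplying the first equation of \eqref{1.1} by $u^{p-1}$, integrating by parts (rewriting the chemotactic term via $\int u^{p-1}\nabla u\cdot\nabla v=-\tfrac{p-1}{p}\int u^p\Delta v$), applying $D(u)\ge k_D u^{m-1}$ and Young's inequality, I obtain an inequality of the form
\bess
\frac{d}{dt}\int_\Omega u^p\,dx+\frac{q}{2}\int_\Omega u^p\,dx+k_Dp(p-1)\int_\Omega u^{p+m-3}|\nabla u|^2\,dx\\
\le C_2\int_\Omega u^{pq'}\,dx+\int_\Omega|\Delta v|^q\,dx+p\varphi_*\int_\Omega u^{p-1}\,dx+\frac{q}{2}\int_\Omega u^p\,dx.
\eess
Multiplying by $e^{-\frac q 2(t-s)}$, integrating in time, and inserting the output of Step 2, I end up with integrals of $\|u\|_{q\sigma'}^{q\sigma'}$, $\|u\|_{pq'}^{pq'}$, $\|u\|_{p-1}^{p-1}$, and $\|u\|_p^p$. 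By the parameter choice in Step 1 (and a verification that $pq'<p+m+2/n-1$, exactly as in \eqref{3.5}), each such $L^k$-norm satisfies $k<p+m+2/n-1\le\min\{n(p+m-1)/(n-2),\,p+m+2/n-1\}$, so Lemma \ref{l2.6} absorbs all of them into $\tfrac{k_Dp(p-1)}{2}\int_\Omega u^{p+m-3}|\nabla u|^2\,dx$ plus a constant. This yields the uniform $L^p$-bound on $u$, and the weighted estimate on $v$ follows by substituting back.

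The main obstacle is Step 1: verifying that a simultaneous choice of $(q,\sigma,\rho,\ldots)$ satisfying the chain of algebraic inequalities is actually possible under precisely the relaxed hypothesis $m>1+n/2-2/n$ together with $p>2m/(n-2)-2/n$. The condition $p>2m/(n-2)-2/n$ is the quantitative reflection of the fact that $\|w\|_{q\sigma}$ is only controlled for $q\sigma<n/(n-2)$, and the bookkeeping required to reconcile this upper bound on $q\sigma$ with the lower bound $q>(p+m+2/n-1)/(m+2/n-1)$ and the Lemma \ref{l2.6} constraint $q\sigma'<p+m+2/n-1$ is the delicate combinatorial point. Once that window is exhibited, Steps 2 and 3 are routine adaptations of the argument in Lemma \ref{l3.3}.
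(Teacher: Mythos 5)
Your Step 1 cannot be carried out: the parameter window you describe is empty. You want simultaneously $q>\frac{p+m+2/n-1}{m+2/n-1}$ (this lower bound is forced, both by the statement of the lemma and by the absorption step, since $pq'<p+m+\frac2n-1$ with $q'=\frac{q}{q-1}$ is equivalent to $q>\frac{p+m+2/n-1}{m+2/n-1}$) and $q\sigma<\frac{n}{n-2}$ with $\sigma>1$, so in particular $q<\frac{n}{n-2}$. But a direct computation shows $\frac{p+m+2/n-1}{m+2/n-1}>\frac{n}{n-2}$ exactly when $p>\frac{2m}{n-2}-\frac2n$, which is precisely the hypothesis on $p$ in the lemma. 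Hence under the stated assumptions every admissible $q$ exceeds $\frac{n}{n-2}$, $q\sigma>\frac{n}{n-2}$ for any $\sigma>1$, and Proposition \ref{p2.4} cannot be invoked to bound $\|w\|_{q\sigma}$ directly. Your reading of the role of $p>\frac{2m}{n-2}-\frac2n$ is in fact reversed: rather than being "the quantitative reflection" of the restriction $q\sigma<\frac{n}{n-2}$, it is what pushes $q$ above $\frac{n}{n-2}$ (in the paper it guarantees $p_*<q$, which is needed to make the Gagliardo--Nirenberg exponent admissible).

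Consequently the central simplification you propose---dropping the maximal Sobolev regularity estimate for the $w$-equation---does not work. The paper's proof keeps that estimate: it bounds $\|w\|_{q\theta}^{q\theta}$ (with $q\theta>\frac{n}{n-2}$) by interpolating via Gagliardo--Nirenberg between $\|\Delta w\|_\rho$ and $\|w\|_{p_*}$, where only the low norm $\|w\|_{p_*}$ with $p_*\in(1,\frac{n}{n-2})$ comes from Proposition \ref{p2.4}; it then controls the time-weighted integral of $\|\Delta w\|_\rho^q$ through Lemma \ref{l2.2a} applied to the $w$-equation, absorbs the resulting $\|v\|_\rho^q$ by Ehrling's inequality together with the $L^1$-bound of $v$ from Lemma \ref{l2.5}, and treats the now-parabolic $v$-equation with Lemma \ref{l2.2a} instead of elliptic regularity. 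The relaxation from $m>2+\frac n2-\frac2n$ to $m>1+\frac n2-\frac2n$ comes from replacing $\|w\|_1$ by $\|w\|_{p_*}$ inside that interpolation (and the better treatment of $\|v\|_\rho$), not from eliminating the maximal-regularity step for $w$. Your Steps 2--3 are otherwise structured correctly, but as written the argument collapses at the point where $\|w\|_{q\sigma}^{q\sigma}$ is "absorbed into a constant via Proposition \ref{p2.4}."
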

\begin{proof}

{\it Step 1: Choosing parameters}. For any fixed $p>\max\left\{1,\ \frac{2m}{n-2}-\frac2n\right\}$, due to $m>1+\frac{n}2-\frac2n=\frac{n}{2\left(\frac{n}{n-2}\right)}+2-\frac2n$, we take $p_*\in\big(1,\frac{n}{n-2}\big)$
such that
\bess
m>\frac{n}{2p_*}+2-\frac2n,\;\;\;{\rm i.e.}, \;\;\frac{2+{n}/{p_*}}{m+2/n-1}<2.
\eess
Let $q_*:=\frac{4}{{n}/{p_*}+4}\left(p+m+\frac2n-1\right)$. Then, we have
\bes
q_*>\frac{p+m+2/n-1}{m+2/n-1}\label{5.3b}
\ees
and
\bess
2\left(p+m+\frac2n-1\right)+n\left(m+\frac2n-1\right)>\frac{2+{n}/{p_*}}{m+2/n-1}\left(p+m+\frac2n-1\right)+n,
\eess
i.e.,
\bes
&&\left(p+m+\frac2n-1\right)\left(2+\frac{n(m+2/n-1)}{p+m+2/n-1}\right)\nonumber\\[1mm]
&>&\frac{\left(2+{n}/{p_*}\right)\left(p+m+2/n-1\right)}{m+2/n-1}+n.\label{5.4b}
\ees
Thus, by \eqref{5.3b} and \eqref{5.4b} we can fix $\frac{p+m+2/n-1}{m+2/n-1}<q<q_*$ such that
\bess
\left(p+m+\frac2n-1\right)\left(2+\frac nq\right)>q\left(2+\frac{n}{p_*}\right)+n
\eess
i.e.,
\bes
\frac{q\left(2+{n}/{p_*}\right)+n}{2+{n}/q}<p+m+\frac2n-1.\label{5.6b}
\ees
On the other hand, on basis of $p>\frac{2m}{n-2}-\frac2n$ and $p_*\in\big[1,\frac{n}{n-2}\big)$ as well as $m>1$, we infer that $p_*<\frac{p+m+2/n-1}{m+2/n-1}$ and hence
\bes
p_*<q.\label{5.4}
\ees
We let $q':=\frac q{q-1}$, then it follows from $\frac{p+m+2/n-1}{m+2/n-1}<q$ that
\bes
pq'<p+m+\frac2n-1,\label{5.5}
\ees
and by $q<q_*$ we also have
\bess
\frac{q\left({n}/{p_*}+4\right)}{4}<p+m+\frac2n-1.
\eess
Therefore, this combined with \eqref{5.6b} enables us to fix $\theta>1$ and hence $\theta':=\frac{\theta}{\theta-1}$ fulfilling
\bes
\max\left\{\frac{q\left(2+{n}/{p_*}\right)+n}{2+{n}/q},\ \frac{q\left({n}/{p_*}+4\right)}{4}\right\}<q\theta'<p+m+\frac2n-1.\label{5.6}
\ees
It follows from the left inequality in \eqref{5.6} that
 \bess
 \theta<\min\left\{\frac{p_*}{n}\left(2+\frac nq\right)+1,\;  \frac{n/{p_*}+4}{{n}/{p_*}}\right\},\eess
then, we have
\bess
\max\left\{0,\ \frac nq-2\right\}<2+\frac nq+\frac{n}{p_*}-\frac{n\theta}{p_*}.
\eess
Moreover, due to the left inequality in \eqref{5.6} and $p_*\in\left(1,\frac{n}{n-2}\right)$, it is easy to see that $\theta'>\frac{{n}/{p_*}+4}{4}>\frac{n+2}{4}$ and so
$\frac nq-2<\frac{n}{q\theta}+2$. Therefore,
 \bess
\frac nq-2<\min\left\{\frac{n}{q\theta}+2,\ 2+\frac nq+\frac{n}{p_*}-\frac{n\theta}{p_*}\right\}.
 \eess
{Since $\frac nq-2<n$, we also have
\bess
\frac nq-2<\min\left\{n,\ \frac{n}{q\theta}+2,\ 2+\frac nq+\frac{n}{p_*}-\frac{n\theta}{p_*}\right\}=:\varrho.
\eess
Taking $0<\ep\ll1$ such that $0<\varrho-\ep<n$ and
\bess
\frac nq-2<\varrho-\ep.
\eess
Define $\rho:=\frac{n}{\varrho-\ep}$, then we have $\rho>1$ and $\frac{n}{\rho}=\varrho-\ep$ satisfies
\bes
\frac nq-2<\frac{n}{\rho}<\min\left\{\frac{n}{q\theta}+2,\; 2+\frac nq+\frac{n}{p_*}-\frac{n\theta}{p_*}\right\}.\label{5.7}
\ees}

{\it Step 2: Estimating $\int_0^t{\rm e}^{-\frac q2(t-s)}\|v(\cdot,s)\|_{W^{2,q}(\Omega)}^q{\rm d}s$}. Taking
\bess
\beta:=\frac{{n}/{p_*}-{n}/{(q\theta)}}{2-{n}/{\rho}+{n}/{p_*}}.
\eess
Due to \eqref{5.4}, it is easy to see that
\bes
\frac1{q\theta}<\frac1q<\frac1{p_*}.\label{5.7a}
\ees
It follows from \eqref{5.7a} and the right inequality in \eqref{5.7} that
\bess
\beta\in(0,1)\ \ {\rm and}\ \ \theta\beta\in(0,1).
\eess
Then, we use the Gagliardo-Nirenberg inequality, {elliptic regularity theory} and Proposition \ref{p2.4} with $p_*\in\big(1,\frac{n}{n-2}\big)$ as well as Young's inequality to find $C_1,C_2,C_3>0$ fulfilling
\bes
\int_\Omega w^{q\theta}{\rm d}x&=&\|w\|_{q\theta}^{q\theta}\nonumber\\[1mm]
&\le&C_1\|\Delta w\|_\rho^{q\theta\beta}\|w\|_{p_*}^{q\theta(1-\beta)}+C_1\|w\|_{p_*}^{q\theta}\nonumber\\[1mm]
&\le&C_2\|\Delta w\|_\rho^{q\theta\beta}+C_2\nonumber\\[1mm]
&\le&\|\Delta w\|_\rho^q+C_3,\;\; \forall\,t\in(0,T_{\rm max}).\label{5.9}
\ees
Recalling the third equation of \eqref{1.1}, we employ the maximal Sobolev regularity theory in Lemma \ref{l2.2a} to get $C_4>0$ such that for all $t\in(0,T_{\rm max})$,
\bes
\int_0^t{\rm e}^{-\frac q2(t-s)}\|\Delta w(\cdot,s)\|_\rho^q{\rm d}s\le C_4\int_0^t{\rm e}^{-\frac q2(t-s)}\|v(\cdot,s)\|_\rho^q{\rm d}s+C_4\|w_0\|_{W^{2,\rho}(\Omega)}^q.\qquad\label{5.10}
\ees
Because of \eqref{2.6y} and the left inequality in \eqref{5.7}, we may use Ehrling's inequality to derive that, there exists $C_5>0$ such that
\bes
C_4\|v\|_\rho^q\le \frac12\|v\|_{W^{2,q}(\Omega)}^q+C_5,\;\; \forall\,t\in(0,T_{\rm max}).\label{5.11}
\ees
By using the maximal Sobolev regularity theory in Lemma \ref{l2.2a} and Young's inequality, we infer from the second equation of \eqref{1.1} that,  there exist $C_6,C_7>0$ such that for all $t\in(0,T_{\rm max})$,
 \bess
&&\int_0^t{\rm e}^{-\frac q2(t-s)}\|v(\cdot,s)\|_{W^{2,q}(\Omega)}^q{\rm d}s\nonumber\\[1mm]
&\le& C_6\int_0^t{\rm e}^{-\frac q2(t-s)}\|(uw)(\cdot,s)\|_q^q{\rm d}s+C_6\|v_0\|_{W^{2,q}(\Omega)}^q\nonumber\\[1mm]
&\le&C_7\int_0^t{\rm e}^{-\frac q2(t-s)}\|u(\cdot,s)\|_{q\theta'}^{q\theta'}{\rm d}s+\int_0^t{\rm e}^{-\frac q2(t-s)}\|w(\cdot,s)\|_{q\theta}^{q\theta}{\rm d}s
+C_6\|v_0\|_{W^{2,q}(\Omega)}^q.
\eess
This combined with \eqref{5.9}-\eqref{5.11} allows us to derive
\bess
&&\int_0^t{\rm e}^{-\frac q2(t-s)}\|v(\cdot,s)\|_{W^{2,q}(\Omega)}^q{\rm d}s\\[1mm]
&\le&\int_0^t\!{\rm e}^{-\frac q2(t-s)}\| w(\cdot,s)\|_{q\theta}^{q\theta}{\rm d}s+C_7\int_0^t\!{\rm e}^{-\frac q2(t-s)}\|u(\cdot,s)\|_{q\theta'}^{q\theta'}{\rm d}s+C_6\|v_0\|_{W^{2,q}(\Omega)}^q\\[1mm]
&\le&\int_0^t{\rm e}^{-\frac q2(t-s)}\|\Delta w(\cdot,s)\|_\rho^q{\rm d}s+C_3\int_0^t{\rm e}^{-\frac q2(t-s)}{\rm d}s\\[1mm]
&&+C_7\int_0^t{\rm e}^{-\frac q2(t-s)}\|u(\cdot,s)\|_{q\theta'}^{q\theta'}
+C_6\|v_0\|_{W^{2,q}(\Omega)}^q\\[1mm]
&\le&C_4\int_0^t{\rm e}^{-\frac q2(t-s)}\|v(\cdot,s)\|_\rho^qds+C_4\|w_0\|_{W^{2,\rho}(\Omega)}^q
+C_3\int_0^t{\rm e}^{-\frac q2(t-s)}{\rm d}s\\[1mm]
&&+C_7\int_0^t{\rm e}^{-\frac q2(t-s)}\|u(\cdot,s)\|_{q\theta'}^{q\theta'}{\rm d}s+C_6\|v_0\|_{W^{2,q}(\Omega)}^q\\[1mm]
&\le&\dd\frac12\int_0^t{\rm e}^{-\frac q2(t-s)}\|v(\cdot,s)\|_{W^{2,q}(\Omega)}^q{\rm d}s
+C_4\|w_0\|_{W^{2,\rho}(\Omega)}^q\\[1mm]
&&+C_7\!\int_0^t\!{\rm e}^{-\frac q2(t-s)}\|u(\cdot,s)\|_{q\theta'}^{q\theta'}{\rm d}s\!+\!(C_5\!+\!C_3)\!\int_0^t\!{\rm e}^{-\frac q2(t-s)}{\rm d}s\!+C_6\|v_0\|_{W^{2,q}(\Omega)}^q\\[1mm]
&\le&\dd\frac12\int_0^t{\rm e}^{-\frac q2(t-s)}\|v(\cdot,s)\|_{W^{2,q}(\Omega)}^q
+C_4\|w_0\|_{W^{2,\rho}(\Omega)}^q\\[1mm]
&&+C_7\int_0^t{\rm e}^{-\frac q2(t-s)}\|u(\cdot,s)\|_{q\theta'}^{q\theta'}{\rm d}s
+\dd\frac{2(C_5+C_3)}q+C_6\|v_0\|_{W^{2,q}(\Omega)}^q,
\eess
and thus
\bes
\int_0^t{\rm e}^{-\frac q2(t-s)}\|v(\cdot,s)\|_{W^{2,q}(\Omega)}^q{\rm d}s
\le C_8\int_0^t{\rm e}^{-\frac q2(t-s)}\|u(\cdot,s)\|_{q\theta'}^{q\theta'}{\rm d}s+C_8\label{5.13}
\ees
for all $t\in(0,T_{\rm max})$, where \bess
C_8=2C_7+2C_4\|w_0\|_{W^{2,\rho}(\Omega)}^q+2C_6\|v_0\|_{W^{2,q}(\Omega)}^q
+\frac{4(C_5+C_3)}q.\eess

{\it Step 3: Testing the $u$-equation of \eqref{1.1}}.
By the same derivation of \eqref{3.16}, we test the first equation of \eqref{1.1} by $u^{p-1}$ and apply \eqref{5.13} and Young's inequality to get $C_9>0$ such that
\bes
&&\int_\Omega u^p{\rm d}x+k_Dp(p-1)\int_0^t{\rm e}^{-\frac q2(t-s)}
\int_\Omega u^{p+m-3}|\nabla u|^2{\rm d}x{\rm d}s\nonumber\\[1mm]
&\le& C_9\int_0^t{\rm e}^{-\frac q2(t-s)}\|u\|_{pq'}^{pq'}{\rm d}s+C_8\int_0^t{\rm e}^{-\frac q2(t-s)}\|u\|_{q\theta'}^{q\theta'}{\rm d}s
+p\varphi_*\int_0^t{\rm e}^{-\frac q2(t-s)}\|u\|_{p-1}^{p-1}{\rm d}s\nonumber\\[1mm]
&&+\dd\frac q2\int_0^t{\rm e}^{-\frac q2(t-s)}\|u\|_p^p{\rm d}s+\int_\Omega u_0^p{\rm d}x+C_8,\;\; \forall\, t\in(0,T_{\rm max}).\label{5.16}
\ees
From \eqref{5.5}, the right hand side of \eqref{5.6} and $m>1-\frac2n$, we have
\bess
{\max\big\{pq',q\theta',p-1,p\big\}}<p+m+\frac2n-1=\min\left\{\frac{n(p+m-1)}{n-2},\ p+m+\frac2n-1\right\},
\eess
which enable us to apply Lemma \ref{l2.6} to find $C_{10}>0$ such that for all $t\in(0,T_{\rm max})$,
\bes
 &&C_9\|u\|_{pq'}^{pq'}+C_8\|u\|_{q\theta'}^{q\theta'}+p\varphi_*\|u\|_{p-1}^{p-1}
+\frac q2\|u\|_p^p\nonumber\\[1mm]
&\le& \frac{k_Dp(p-1)}{2}\int_\Omega u^{p+m-3}|\nabla u|^2{\rm d}x+C_{10}.\label{5.18a}
\ees
This combined with \eqref{5.16} infers that for any $t\in(0,T_{\rm max})$,
 \bes
 &&\int_\Omega u^p{\rm d}x+\dd\frac{k_Dp(p-1)}{2}\int_0^t{\rm e}^{-\frac q2(t-s)}\int_\Omega u^{p+m-3}|\nabla u|^2{\rm d}x {\rm d}s\nonumber\\[1mm]
 &\le& \int_\Omega u_0^p{\rm d}x+C_8+\frac{2C_{10}}q.\label{5.19a}
\ees
This provides the uniform-in-time $L^p$-boundedness of $u$. We then combine \eqref{5.19a} and \eqref{5.13} with \eqref{5.18a} to get $C_{11}>0$ such that
\bes
\int_0^t{\rm e}^{-\frac q2(t-s)}\|v(\cdot,s)\|_{W^{2,q}(\Omega)}^q{\rm d}s\le C_{11},\;\; \forall\, t\in(0,T_{\rm max}).\label{5.17a}
\ees
Since $\frac{p+m+\frac2n-1}{m+\frac2n-1}<q<q_*$, we obtain the weighted time-space estimate for $v$ from \eqref{5.17a}.
\end{proof}

In the case of $n=2$, we have from Proposition \ref{p2.4} that, for any $p>1$ there exists $C_1=C_1(p)>0$ such that $\|w(\cdot,t)\|_p<C_1$ for all $t\in(0,T_{\rm max})$. This will help us build some powerful estimations. We notice that the assumption for $m$ in Theorem \ref{t1.3} with $n=2$ is actually $m>1$.

\begin{lemma}\label{l5.3}
Let $\kappa=1$, $n=2$ and $m>1$. Suppose that $D(h)\ge k_Dh^{m-1}$ for all $h\ge0$ and some constant $k_D>0$. Then, for any $p>1$, there exists $C=C(p)>0$ such that
\bess
\|u(\cdot,t)\|_p\le C,\;\;\forall\,t\in(0,T_{\rm max}).
\eess
\end{lemma}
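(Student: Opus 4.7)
The plan is to mirror the proof of Lemma \ref{l5.2}, but take crucial advantage of the fact that when $n=2$ the constraint $p_*<n/(n-2)$ in Proposition \ref{p2.4} becomes vacuous, yielding uniform-in-time $L^s$-bounds on $w$ for every finite $s$. This collapses the multi-parameter juggling of Lemma \ref{l5.2} into a one-parameter argument. I will test the $u$-equation with $u^{p-1}$, use $D(u)\ge k_Du^{m-1}$ to extract the dissipation $\int u^{p+m-3}|\nabla u|^2$, handle the chemotactic contribution via maximal Sobolev regularity applied to the $v$-equation (Lemma \ref{l2.2a}), and absorb the resulting polynomial powers of $u$ into the dissipation by means of the conditional Gagliardo--Nirenberg estimate in Lemma \ref{l2.6}.

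\textbf{Parameter selection.} Fix $p>1$. Since $m>1$, the interval $\bigl(\tfrac{p+m}{m},\,p+m\bigr)$ is nonempty, so I pick $q$ in it. Writing $q':=q/(q-1)$, the lower bound $q>\tfrac{p+m}{m}$ forces $pq'<p+m$. Next, select $\alpha>1$ close enough to $1$ that $q\alpha<p+m$; its H\"older conjugate $\alpha'$ is then finite, and by Proposition \ref{p2.4} (which in $n=2$ controls $w$ in every $L^s$ with $s<\infty$) there exists $C_w>0$ with $\|w(\cdot,t)\|_{q\alpha'}\le C_w$ for all $t\in(0,T_{\rm max})$.

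\textbf{Execution.} Testing the first equation of \eqref{1.1} with $u^{p-1}$, integrating by parts in the chemotactic term, using $D(u)\ge k_Du^{m-1}$ and applying Young's inequality to $\chi(p-1)\int u^p|\Delta v|\le C_1\int u^{pq'}+\int|\Delta v|^q$, I arrive at
\bess
\frac{\rm d}{{\rm d}t}\int_\Omega u^p{\rm d}x+\frac{q}{2}\int_\Omega u^p{\rm d}x+k_Dp(p-1)\int_\Omega u^{p+m-3}|\nabla u|^2{\rm d}x\le C_1\|u\|_{pq'}^{pq'}+\|\Delta v\|_q^q+C_2\|u\|_{p-1}^{p-1}
\eess
for $t\in(0,T_{\rm max})$, exactly along the lines of \eqref{3.15}. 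Multiplying by ${\rm e}^{-\frac{q}{2}(t-s)}$ and integrating on $(0,t)$, then applying Lemma \ref{l2.2a} to the $v$-equation together with the pointwise Young bound $u^qw^q\le \alpha^{-1}u^{q\alpha}+(\alpha')^{-1}w^{q\alpha'}$ and the $L^{q\alpha'}$-bound on $w$, I obtain
\bess
\int_0^t{\rm e}^{-\frac{q}{2}(t-s)}\|\Delta v(\cdot,s)\|_q^q{\rm d}s\le C_3\int_0^t{\rm e}^{-\frac{q}{2}(t-s)}\|u(\cdot,s)\|_{q\alpha}^{q\alpha}{\rm d}s+C_4.
\eess
Because $\max\{pq',\,q\alpha,\,p-1,\,p\}<p+m=p+m+\tfrac{2}{n}-1$ when $n=2$ (and the other entry of the minimum in \eqref{2.7y} is $+\infty$), Lemma \ref{l2.6} absorbs all polynomial $u$-terms on the right-hand side into $\tfrac12 k_Dp(p-1)\int u^{p+m-3}|\nabla u|^2$ plus an additive constant. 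A standard ODE reading of the resulting time-integrated inequality then yields the asserted uniform $L^p$-bound on $u$.

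\textbf{Main obstacle.} The whole argument pivots on the nonemptiness of the interval $\bigl(\tfrac{p+m}{m},p+m\bigr)$ and on the existence of room to push $q\alpha$ below $p+m$; both are equivalent to $m>1$. The hypothesis $m>1+\tfrac{n}{2}-\tfrac{2}{n}$ of Theorem \ref{t1.3} specializes precisely to $m>1$ when $n=2$, so it is used sharply here. Compared with Lemma \ref{l5.2}, the full $L^s$-range for $w$ in two dimensions eliminates the delicate simultaneous tuning of the auxiliary exponents $\rho$, $\theta$ (and of the auxiliary $p_*$ coming from Proposition \ref{p2.4}) that was needed for $n\ge 3$.
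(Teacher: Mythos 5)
Your proposal is correct and follows essentially the same route as the paper's proof of Lemma \ref{l5.3}: choosing $q\in\bigl(\tfrac{p+m}{m},p+m\bigr)$ so that $pq'<p+m$, testing with $u^{p-1}$, invoking Lemma \ref{l2.2a} on the $v$-equation with the splitting $\|uw\|_q^q\le C\|u\|_{q\alpha}^{q\alpha}+C\|w\|_{q\alpha'}^{q\alpha'}$ (your $\alpha$ is the paper's $\theta$), bounding $w$ via the full two-dimensional range of Proposition \ref{p2.4}, and absorbing all polynomial $u$-terms by Lemma \ref{l2.6}. The only cosmetic difference is that the paper keeps the term $\tfrac q2\|u\|_p^p$ explicitly on the right-hand side before absorbing it, which you in effect also do since you include $p$ in the list of exponents handled by Lemma \ref{l2.6}.
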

\begin{proof}
For any fixed $p>1$, due to $m>1$ we can pick $q\in\left(\frac{p+m}{m},\ p+m\right)$ such that $q':=\frac q{q-1}<\frac{p+m}{p}$, i.e.,
\bes
pq'<p+m.\label{5.18}
\ees
For such $q$, we take $\theta>1$ and $\theta':=\frac{\theta}{\theta-1}$ fulfilling
\bes
q<q\theta<p+m.\label{5.19}
\ees
Similar to the derivation of \eqref{3.15}, we test the first equation of \eqref{1.1} by $u^{p-1}$ and use Young's inequality to get $C_1>0$ such that
 \bess
 \frac{\rm d}{{\rm d}t}\int_\Omega u^p{\rm d}x+\dd\frac q2\int_\Omega u^p{\rm d}x
&\le& -k_Dp(p-1)\int_\Omega u^{p+m-3}|\nabla u|^2{\rm d}x+C_1\int_\Omega u^{pq'}{\rm d}x\nonumber\\[1mm]
&&+\int_\Omega|\Delta v|^q{\rm d}x+p\varphi_*\int_\Omega  u^{p-1}{\rm d}x+\dd\frac q2\int_\Omega u^p{\rm d}x
\eess
for all $t\in(0,T_{\rm max})$, where $\varphi_*:=\|\varphi\|_{L^\infty(\Omega\times(0,\infty))}$, and hence
\bes
&&\int_\Omega u^p{\rm d}x+k_Dp(p-1)\int_0^t{\rm e}^{-\frac q2(t-s)}\int_\Omega u^{p+m-3}|\nabla u|^2{\rm d}x{\rm d}s\nonumber\\[1mm]
&\le& \int_\Omega u_0^p{\rm d}x+C_1\int_0^t{\rm e}^{-\frac q2(t-s)}\|u(\cdot,s)\|_{pq'}^{pq'}{\rm d}s
+\int_0^t{\rm e}^{-\frac q2(t-s)}\|\Delta v(\cdot,s)\|_q^q{\rm d}s\nonumber\\[1mm]
&&+p\varphi_*\int_0^t{\rm e}^{-\frac q2(t-s)}\|u(\cdot,s)\|_{p-1}^{p-1}{\rm d}s+\dd\frac q2\int_0^t{\rm e}^{-\frac q2(t-s)}\|u(\cdot,s)\|_p^p{\rm d}s\label{5.20}
\ees
for all $t\in(0,T_{\rm max})$.

By the second equation of \eqref{1.1} and the maximal Sobolev regularity theory in Lemma \ref{l2.2a} as well as Young's inequality and Proposition \ref{p2.4}, one can find $C_2,C_3,C_4>0$ such that
 \bess
&&\int_0^t {\rm e}^{-\frac q2(t-s)}\|\Delta v(\cdot,s)\|_q^q{\rm d}s\\
&\le& C_2\int_0^t {\rm e}^{-\frac q2(t-s)}\|(uw)(\cdot,s)\|_q^q{\rm d}s+C_2\|v_0\|_{W^{2,q}(\Omega)}^q\\
&\le&C_3\!\int_0^t\!{\rm e}^{-\frac q2(t-s)}\|u(\cdot,s)\|_{q\theta}^{q\theta}{\rm d}s
+C_3\!\int_0^t\!{\rm e}^{-\frac q2(t-s)}\|w(\cdot,s)\|_{q\theta'}^{q\theta'}{\rm d}s+C_2\|v_0\|_{W^{2,q}(\Omega)}^q\\
&\le&C_3\int_0^t {\rm e}^{-\frac q2(t-s)}\|u(\cdot,s)\|_{q\theta}^{q\theta}{\rm d}s
+C_4\int_0^t {\rm e}^{-\frac q2(t-s)}{\rm d}s+C_2\|v_0\|_{W^{2,q}(\Omega)}^q\\
&\le&C_3\int_0^t {\rm e}^{-\frac q2(t-s)}\|u(\cdot,s)\|_{q\theta}^{q\theta}{\rm d}s+\dd\frac{2C_4}q+C_2\|v_0\|_{W^{2,q}(\Omega)}^q,\;\; \forall\,t\in(0,T_{\rm max}).
\eess
Inserting this into \eqref{5.20} yields that for any $t\in(0,T_{\rm max})$ we have
\bes
&&\int_\Omega u^p{\rm d}x+k_Dp(p-1)\int_0^t{\rm e}^{-\frac q2(t-s)}\int_\Omega u^{p+m-3}|\nabla u|^2{\rm d}x{\rm d}s\nonumber\\
&\le& \int_\Omega u_0^p{\rm d}x+C_1\int_0^t{\rm e}^{-\frac q2(t-s)}\|u(\cdot,s)\|_{pq'}^{pq'}{\rm d}s+C_3\int_0^t {\rm e}^{-\frac q2(t-s)}\|u(\cdot,s)\|_{q\theta}^{q\theta}{\rm d}s\nonumber\\
&&+p\varphi_*\int_0^t{\rm e}^{-\frac q2(t-s)}\|u(\cdot,s)\|_{p-1}^{p-1}{\rm d}s+\dd\frac q2\int_0^t{\rm e}^{-\frac q2(t-s)}\|u(\cdot,s)\|_p^p{\rm d}s\nonumber\\
 &&+\frac{2C_4}q+C_2\|v_0\|_{W^{2,q}(\Omega)}^q.\qquad\label{5.21}
\ees
Recalling Lemma \ref{l2.6} with $n=2$, on the basis of \eqref{5.18} and \eqref{5.19} as well as $p<p+m$, there exists $C_5>0$ such that
 \bess
 && C_1\|u\|_{pq'}^{pq'}+C_3\|u\|_{q\theta}^{q\theta}+p\varphi_*\|u\|_{p-1}^{p-1}
+\frac q2\|u\|_p^p\\[0.1mm]
&\le& p(p-1)k_D\int_\Omega u^{p+m-3}|\nabla u|^2{\rm d}x+C_5,\;\; \forall\,t\in(0,T_{\rm max}).
\eess
This in conjunction with \eqref{5.21} implies that
 \bess
\int_\Omega u^p{\rm d}x\le \int_\Omega u_0^p{\rm d}x+\frac{2C_4}q+C_2\|v_0\|_{W^{2,q}(\Omega)}^q+\frac{2C_5}q,\;\; \forall\,t\in(0,T_{\rm max}),
\eess
which asserts the uniform-in-time $L^p$-boundedness of $u$.
\end{proof}

The above two lemmata allow us to infer the uniform-in-time $L^\infty$-boundedness of the solution.

\begin{lemma}\label{l5.4}
Let $\kappa=1$, $n\ge2$ and $m>1+\frac n2-\frac 2n$. Suppose that $D(h)\ge k_Dh^{m-1}$ for all $h\ge0$ and some constant $k_D>0$. Then there exists $C>0$ such that
\bess
\|u(\cdot,t)\|_\infty+\|v(\cdot,t)\|_{W^{1,\infty}(\Omega)}+\|w(\cdot,t)\|_\infty\le C,\;\; \forall\,t\in(0,T_{\rm max}).
\eess
\end{lemma}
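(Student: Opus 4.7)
The plan is to imitate the two-case strategy of Lemma \ref{l3.4}, replacing the elliptic $L^p$ regularity used there by the parabolic smoothing estimates of Lemma \ref{l2.3a} (and Lemma \ref{l2.2a}), since in the fully parabolic case $v$ solves a parabolic rather than elliptic equation. The overall target is: given uniform $L^p$-control on $u$ from Lemmas \ref{l5.2}--\ref{l5.3} and a companion $L^q$-bound on $w$ coming from Proposition \ref{p2.4} or from the time-space estimate for $v$, upgrade $uw$ to an $L^k$-bound with $k>n$, then bootstrap to $\|v\|_{W^{1,\infty}}$, then to $\|w\|_\infty$, and finally to $\|u\|_\infty$ by a Moser iteration.

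\textbf{Case $n\ge 3$.} First, I would fix $p$ large enough so that Lemma \ref{l5.2} applies; concretely, pick
\[p>\max\!\Big\{1,\ \tfrac{2m}{n-2}-\tfrac{2}{n},\ 2n,\ (2n-1)(m+\tfrac{2}{n}-1)\Big\},\]
so that both $\|u(\cdot,t)\|_p\le C$ on $(0,T_{\rm max})$ and the weighted integral $\int_0^t e^{-\frac{q}{2}(t-s)}\|v(\cdot,s)\|_{W^{2,q}(\Omega)}^q\,ds\le C$ hold for some $q>\frac{p+m+2/n-1}{m+2/n-1}>2n$. Next, testing the third equation of \eqref{1.1} by $w^{q-1}$ and using Young's inequality yields
\[\frac{d}{dt}\int_\Omega w^q\,dx+\frac{q}{2}\int_\Omega w^q\,dx\le C\int_\Omega v^q\,dx,\]
so integrating in time against the kernel $e^{-\frac{q}{2}(t-s)}$ together with the weighted $W^{2,q}$-estimate for $v$ (and the Sobolev embedding $W^{2,q}\hookrightarrow L^q$) provides a uniform bound $\|w(\cdot,t)\|_q\le C$. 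Since $p>2n$ and $q>2n$, H\"older's inequality delivers $\|(uw)(\cdot,t)\|_k\le C$ for some $k>n$.

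The regularity of $v$ and $w$ is then unlocked by Lemma \ref{l2.3a}: applied to the $v$-equation with $f=uw$ and the choice $q_1=q_2=\infty$ (legitimate since $k>n$), one obtains $\|v(\cdot,t)\|_{W^{1,\infty}(\Omega)}\le C$. Feeding this into the $w$-equation and invoking Lemma \ref{l2.3a} once more (or an elementary maximum-principle/semigroup comparison) yields $\|w(\cdot,t)\|_\infty\le C$. With $\nabla v$ and $w$ both in $L^\infty$, the first equation of \eqref{1.1} becomes amenable to a standard Alikakos--Moser iteration (as in Refs.\ \refcite{Ke70,FIWY16}); the degenerate-in-$u$ diffusion $D(u)\ge k_D u^{m-1}$ is no obstacle because $m>1+\frac{n}{2}-\frac{2}{n}>1$, and the iteration yields $\|u(\cdot,t)\|_\infty\le C$.

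\textbf{Case $n=2$.} Here Proposition \ref{p2.4} already furnishes $\|w(\cdot,t)\|_p\le C(p)$ for every $p<\infty$, and Lemma \ref{l5.3} similarly supplies $\|u(\cdot,t)\|_p\le C(p)$ for every $p<\infty$ under the weaker hypothesis $m>1$. Thus $\|(uw)(\cdot,t)\|_k$ is bounded for every $k<\infty$ and in particular for some $k>2=n$, after which the same Lemma \ref{l2.3a}-bootstrap and Moser iteration used in the previous case go through verbatim. The main technical obstacle I expect is the bookkeeping needed to choose $p$, $q$, $k$ so that the parabolic regularisation step for $v$ pushes the $L^k$-norm of $uw$ above the Sobolev threshold $n$; once that threshold is cleared the rest of the argument is essentially routine, and no further structural difficulty arises from the nonlinear diffusion since we only use $D(u)\ge k_D u^{m-1}$ through the a priori estimates already established in Lemmas \ref{l5.2} and \ref{l5.3}.
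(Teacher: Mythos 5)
Your proposal is correct and follows essentially the same route as the paper's own proof: in the case $n\ge3$ it combines the $L^p$-bound on $u$ and the weighted time estimate for $v$ from Lemma \ref{l5.2} with a $w^{q-1}$-testing of the $w$-equation to get $\|w\|_q$, then bounds $\|uw\|_k$ with $k>n$, applies Lemma \ref{l2.3a} to reach $\|v\|_{W^{1,\infty}}$, uses comparison for $\|w\|_\infty$ and a Moser iteration for $\|u\|_\infty$; in the case $n=2$ it uses Proposition \ref{p2.4} and Lemma \ref{l5.3} exactly as the paper does. No gaps worth noting.
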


\begin{proof}
We shall claim the uniform-in-time $W^{1,\infty}(\Omega)$-boundedness of $v$ firstly.

{\it Case I: $n\ge3$}. For fixed $p>\max\left\{2n,\ (2n-1)\left(m+\frac2n-1\right),\ \frac{2m}{n-2}-\frac2n\right\}$, from Lemma \ref{l5.2} there exist $q>\frac{p+m+2/n-1}{m+2/n-1}$ and $C_1>0$ such that
\bes
\|u(\cdot,t)\|_p\le C_1\;,\;\; \forall\, t\in(0,T_{\rm max})\label{5.19b}
\ees
and
\bes
\int_0^t {\rm e}^{-\frac q2(t-s)}\|v(\cdot,s)\|_q^q{\rm d}s\le C_1\;,\;\; \forall\, t\in(0,T_{\rm max}).\label{5.19c}
\ees
By the same reason of \eqref{3.20}, we derive from the $w$-equation of \eqref{1.1} and \eqref{5.19c} that, there is $C_2>0$ fulfilling
\bess
\|w(\cdot,t)\|_q\le C_2\;,\;\; \forall\,t\in(0,T_{\rm max}),
\eess
which combined with \eqref{5.19b} and $p>2n$ and $q>\frac{p+m+2/n-1}{m+2/n-1}>2n$ implies that, for some $\theta>n$ there exists $C_3>0$ such that
\bess
\|(uw)(\cdot,t)\|_\theta\le C_3\;,\;\; \forall\,t\in(0,T_{\rm max}).
\eess
In light of Lemma \ref{l2.3a}, we then derive from the $v$-equation of \eqref{1.1} that, there exists $C_4>0$ such that
\bess
\|v(\cdot,t)\|_{W^{1,\infty}(\Omega)}\le C_4,\;\; \forall\,t\in(0,T_{\rm max}).
\eess

{\it Case II: $n=2$}. By Proposition \ref{p2.4} and Lemma \ref{l5.3}, there exists $C_5>0$ such that
\bess
\|u(\cdot,t)w(\cdot,t)\|_6\le C_5,\;\; \forall\,t\in(0,T_{\rm max}).
\eess
Recalling the $v$-equation of \eqref{1.1}, by Lemma \ref{l2.3a}, one can find $C_6>0$ fulfilling
\bess
\|v(\cdot,t)\|_{W^{1,\infty}(\Omega)}\le C_6,\;\; \forall\,t\in(0,T_{\rm max}).
\eess

With the uniform-in-time $W^{1,\infty}(\Omega)$-boundedness of $v$ at hand, the uniform-in-time $L^\infty$ boundedness of $w$ can be proved by using the comparison principle, and then we can perform a Moser-type iterative argument (e.g. [20, Lemma A.1]) to deduce the uniform-in-time $L^\infty$-boundedness of $u$. We therefore finish the proof.
\end{proof}

\noindent{\bf Proof of Theorem \ref{t1.3}.}
The theorem follows directly from the results of Lemma \ref{l2.1} and Lemma \ref{l5.4}.\hfill{\fontsize{8.7pt}{8.7pt}$\Box$}

{\bf Acknowledgment} The work of the first author was supported by National Natural Science Foundation of China 12101519, and the work of the  second author was supported by National Natural Science Foundation of China 12171120.

The authors are very grateful to Professors N. Bellomo and F. Brezzi for their valuable suggestions and provided us with the Ref. \refcite{Bur24}, and thank the anonymous referees for their helpful comments and suggestions.

\end{document}